\newtheorem{lemma}{Lemma}[section]
\newtheorem{theorem}[lemma]{Theorem}
\newtheorem{proposition}[lemma]{Proposition}
\newtheorem{corollary}[lemma]{Corollary}
\theoremstyle{definition}
\newtheorem{definition}[lemma]{Definition}
\newtheorem*{remark}{Remark}
\newtheorem*{assumption}{Assumption}
\numberwithin{equation}{section}
\newcommand{\comment}[1]{}
\newcommand{\cE}{{\mathcal E}}
\newcommand{\R}{{\mathbb R}}
\newcommand{\N}{{\mathbb N}}
\newcommand{\lpm}{{L^p(X,\mu)}}
\newcommand{\as}[1]{\langle #1\rangle}
\newcommand{\av}[1]{\left\Vert #1\right\Vert}
\newcommand{\aV}[1]{\left\Vert #1\right\Vert}
\newcommand{\Hm}[1]{\leavevmode{\marginpar{\tiny%
$\hbox to 0mm{\hspace*{-0.5mm}$\leftarrow$\hss}%
\vcenter{\vrule depth 0.1mm height 0.1mm width \the\marginparwidth}%
\hbox to 0mm{\hss$\rightarrow$\hspace*{-0.5mm}}$\\\relax\raggedright
#1}}}
\newcommand{\Ee}{\mathcal{E}_{\rm e}}
\DeclareMathOperator*{\esssup}{ess\,sup}
\DeclareMathOperator*{\essinf}{ess\,inf}
\begin{document}

\title{(Weak) Hardy and Poincaré inequalities and criticality theory}

\author{Marcel Schmidt}
\address{Mathematisches Institut, Universität Leipzig, 04109 Leipzig, Germany.} \email{marcel.schmidt@math.uni-leipzig.de}

\maketitle

\begin{abstract}
 In this paper we study Hardy and Poincaré inequalities and their weak versions for quadratic forms satisfying the first Beurling-Deny criterion.  We employ these inequalities to establish a criticality theory for such forms. 
\end{abstract}




%
%

\section{Introduction}

Let $q \colon L^2(X,\mu) \to [0,\infty]$ be a closed quadratic form satisfying the first Beurling-Deny criterion. A typical example is the quadratic form of a (discrete) Schrödinger operator, see e.g. \cite{PT,Tak1,KPP} or a Dirichlet form, see e.g. \cite{FOT}. We deal with inequalities of the form 
\begin{align}\label{inequality:fundamental}
 \int_X f^2 w d\mu \leq \alpha(r) q(f) + r \Phi(f)^2, \quad f \perp_w \ker q,\, r > 0. \tag{$\Diamond$}
\end{align}
Here, $w \colon X \to (0,\infty)$, $\alpha \colon (0,\infty) \to (0,\infty)$ is decreasing, $\Phi \colon L^2(X,\mu) \to [0,\infty]$ is a sublinear functional and the symbol $\perp_w$ indicates that orthogonality is considered in $L^2(X,w\mu)$. If $\ker q = \{0\}$, Inequality~\eqref{inequality:fundamental} is called {\em weak Hardy inequality} and if $\ker q \neq \{0\}$, Inequality~\eqref{inequality:fundamental} is called {\em weak Poincaré inequality}. In the case $\Phi = 0$, the function $\alpha$ becomes a constant and Inequality~\eqref{inequality:fundamental} is referred to as {\em Hardy inequality} if $\ker q = \{0\}$, respectively  {\em Poincaré inequality} if $\ker q \neq \{0\}$. 

The goal of this paper is to provide abstract criteria for (weak) Hardy/Poincaré inequalities with respect to the sublinear functionals $\Phi = 0$ and $\Phi(f) = \av{f/h}_\infty$, where $h \colon X\to (0,\infty)$. We study which $w$ are eligible but do not aim at giving explicit bounds for $\alpha$ if $\Phi \neq 0$. Moreover, we apply these inequalities to establish a criticality theory for general forms satisfying the first Beurling-Deny criterion.

It turns out that if one replaces $ \int f^2 w d\mu$ on the left side of Inequality~\ref{inequality:fundamental} by $(\int f w d\mu)^2$, then for generic sublinear $\Phi$ there exists a function $\alpha$ such this inequality holds, see Theorem~\ref{theorem:very weak}. For forms with the first Beurling-Deny criterion and $\ker q = \{0\}$, this always leads to a weak Hardy inequality with respect to $\Phi(f) = \av{f/h}_\infty$ as long as $h \in L^2(X,\mu)$ and $w \in L^2(X,h^2\mu)$, see Theorem~\ref{theorem:weak hardy}. Thus, one can say that weak Hardy inequalities hold generically for forms with the first Beurling-Deny criterion and trivial kernel. 

Forms satisfying a Hardy inequality are called {\em subcritical}  and there is a large amount of literature on subcriticality, see e.g. (and references therein) \cite{Mur,Pin,Pins} for elliptic operators with real coefficients, \cite{Fit,Tak1,TU} for generalized Schrödinger forms (Dirichlet form plus potential term) and \cite{KPP} for discrete Schrödinger operators. It turns out that weak Hardy inequalities can be employed to study subcriticality.  We give a comprehensive characterization in Theorem~\ref{theorem:subcriticality}, which should cover most previous results (for quadratic forms), simplifies/unifies proofs and also gives some new insights.  

If $q$ is irreducible, in the situations where subcriticality is well understood (see the mentioned references on subcriticality), there is a dichotomy. Either $q$ is subcritical or there exists a sequence  $(\varphi_n)$  with $q(\varphi_n) \to 0$ that converges pointwise to a strictly positive function $h$.  In this second case, $q$ is called {\em critical} and the function $h$ is unique up to multiplication by a constant, the so-called {\em Agmon ground state} of $q$. Our discussion in Theorem~\ref{theorem:subcriticality} shows that this dichotomy may fail in general. It becomes a trichotomy and the third case (besides criticality and subcriticality) happens if and only if the form $q$ does not possess an excessive function, see Corollary~\ref{corollary:trichotomy}. So far we do not have a concrete example for this case. The reason is that in the situations where subcriticality was studied previously, the corresponding semigroups  are semigroups of kernel operators and we show in Appendix~\ref{section:appendix} that irreducible semigroups of kernel operators always admit excessive functions.  Along the way we give a partial answer to a question of Schep \cite{Schep} on more general positive operators on $L^p$-spaces.

Weak Poincaré inequalities were introduced in \cite{RW} for conservative Dirichlet forms on finite measure spaces to study the rate of convergence of their semigroups to equilibrium when there is no spectral gap, i.e., when they do not satisfy a Poincaré inequality with $w = 1$.  Similarly, the weak Hardy inequalities mentioned above can be employed to establish the rate of convergence to $0$ of semigroups coming from forms with trivial kernel, see the remark after Theorem~\ref{theorem:weak hardy}. 

A closed quadratic form $q$ on $L^2(X,\mu)$ with the first Beurling-Deny criterion can be extended to a lower semicontinuous quadratic form $q_e$ on $L^0(X,\mu)$, the so-called extended form. With this at hand the Agmon ground state (if it exists) is an element of the kernel of $q_e$ and criteria for subcriticality can be formulated conveniently in terms of $q_e$. One of the observations is that subcriticality is equivalent to the domain of $q_e$ being a Hilbert space, a fact which is well-known for Dirichlet forms (where subcriticality is called transience and the domain of the extended form is the extended Dirichlet space), see e.g. \cite[Section~1.6]{FOT}. Moreover, it is known for critical Dirichlet forms (usually called recurrent Dirichlet forms)  that the quotient of the extended Dirichlet space modulo constants is a Hilbert space if  a Poincaré inequality holds, see e.g. \cite[Section~4.8]{FOT}. We show in Theorem~\ref{theorem:completeness extended space critical case} that some sort of converse holds in our setting: In the critical case completeness of the domain of $q_e$ modulo the kernel of $q_e$ is equivalent to a weak Poincaré inequality. This observation seems to be new. An example from \cite{RW} shows that there are irreducible conservative Dirichlet forms without weak Poincaré inequality and as a consequence we obtain that their extended Dirichlet space modulo constants (i.e. the domain of the extended form modulo its kernel) is not complete, see Corollary~\ref{coro:non complete}. To the best of our knowledge it is a new observation that such forms exist. 

At the heart of the considerations regarding the completeness of the domain of the extended form lies the following observation: If $q$ has an excessive function,  weak Hardy/Poincaré inequalities are equivalent to the continuity of the embedding of (a quotient) of the domain of the extended form equipped with the norm coming from $q_e$ into (a quotient) of $L^0(X,\mu)$ equipped with (the quotient topology of) the topology of local convergence in measure. That this continuity is equivalent to completeness of the the domain of the extended form is a consequence of the abstract Theorem~\ref{theorem:2 imply the third}, which is taken from \cite{Schmi3}.

For technical reasons and in order to understand the third case besides criticality and subcriticality in the mentioned trichotomy better, we  introduce another functional $q^+$ on $L^+(X,\mu)$ (the cone of $[0,\infty]$-valued $\mu$-a.e. defined functions), which is a lower semicontinuous extension of $q_e$. If $q$ is irreducible, the mentioned trichotomy then reads either $\ker q^+ = \{0\}$ (subcriticality) or $\ker q^+ = \{\lambda h \mid \lambda \in  [0,\infty]\}$ (criticality, with Agmon ground state $h$) or  $\ker q^+ = \{0,\infty\}$.

As discussed above, besides new insights, our approach to criticality theory also yields simplified/unified proofs for facts,  which are known for Dirichlet forms and some more general situations. We lay out the material such that our  proofs do not depend on known aspects of criticality theory, for otherwise they would not really simplify/unify arguments. To this end, we discuss properties of forms with the first Beurling-Deny criterion in Section~\ref{section:BD} in some detail.  Among other things we give a short independent proof for the existence of the extended forms $q_e$, see Proposition~\ref{prop:existence of extended form}, which otherwise could be deduced from \cite{Schmu2}. 

{\em Acknowledgments:} The author thanks Matthias Keller, Felix Pogorzelski and Yehuda Pinchover for sharing their knowledge on criticality theory and for listening to his ideas on the matter during a stay at the Technion in Haifa.   Moreover, he thanks Anton Schep for discussions on positive operators. 


%

\section{Preliminaries}

\subsection{Closed quadratic forms on $L^2(X,\mu)$ and $L^0(X,\mu)$}

Throughout this text $(X,\mathfrak{A},\mu)$ is a  $\sigma$-finite measure space. By $L^+(X,\mu)$ we denote the quotient  of the space of measurable functions $f \colon X \to [0,\infty]$ with respect to equivalence $\mu$-a.e. and by $L^0(X,\mu)$ we denote the quotient  of the space of measurable functions $f \colon X \to \R$ with respect to equivalence $\mu$-a.e. For $f \in L^+(X,\mu)$ we write $f > 0$ if $f(x) > 0$ for $\mu$-a.e. $x \in X$. For any sequence $(f_n)$ in $L^+(X,\mu)$ the pointwise a.e. defined functions $\liminf_{n \to \infty} f_n$ and $\limsup_{n \to \infty} f_n$ exist in $L^+(X,\mu)$.

We equip $L^0(X,\mu)$ and $L^+(m)$ with the topology of {\em local convergence in measure}. More precisely, we choose an integrable $\varphi \colon X \to (0,\infty)$ and define for $f \in L^0(X,\mu)$ the $F$-norm (which is not a norm)  
$$\av{f}_0 = \int_{X} (|f|\wedge 1)\varphi  d\mu.$$
It induces a metric $d$ on $L^0(X,\mu)$ by $d(f,g) = \av{f - g}_0$  and a metric $\rho$ on $L^+(X,\mu)$ by $\rho(f,g) = d(\arctan(f),\arctan(g))$ (with the convention $\arctan(\infty) = \pi/2$). A sequence $(f_n)$ in $L^0(X,\mu)$ or in $L^+(X,\mu)$ converges with respect to these metrics to a function $f$ if and only if any subsequence of $(f_n)$ has a subsequence converging to $f$ $\mu$-a.e.  In particular, the topologies generated by $d$ and $\rho$ coincide on $L^0_+(X,\mu) = L^0(X,\mu) \cap L^+(X,\mu)$. Both $L^0(X,\mu)$ and $L^+(X,\mu)$ are complete and  $L^0(X,\mu)$ is a topological vector space, i.e., addition and scalar multiplication are continuous. 

As usual, for $p \in [1,\infty]$ we denote by $L^p(X,\mu)$ the Lebesgue spaces of $p$-integrable real-valued functions with corresponding norm $\av{\cdot}_p$. The scalar product on $L^2(X,\mu)$ is denoted by $\as{\cdot,\cdot}$. By $L_+^p(X,\mu)$ we denote the cone of nonnegative functions in $L^p(X,\mu)$, $p \in \{0\} \cup [1,\infty]$. If $h \in L_+^0(X,\mu)$ is strictly positive, we let 
$$L^\infty_h(X,\mu) = \{f \in L^0(X,\mu) \mid f/h \in L^\infty(X,\mu)\}$$
and equip it with the norm $\av{f}_{h,\infty} = \av{f/h}_\infty$.

In this text we consider quadratic forms on the Hilbert space $L^2(X,\mu)$ and on the topological vector space  $L^0(X,\mu)$, and certain homogenous functionals on the cone $L^+(X,\mu)$. 

Let $p \in \{0,2\}$ and let  $q \colon L^p(X,\mu) \to [0,\infty]$  be a quadratic form with {\em domain}
$$D(q) = \{f \in L^p(X,\mu) \mid q(f) < \infty\}.$$
By polarization it induces a bilinear form on its domain, which we also denote by $q$. In this sense we have $q(f) = q(f,f)$ for $f \in D(q)$.

We call the quadratic form $q$ {\em closed} if $D(q)$ is complete with respect to the metric $d_q(f,g) = q(f-g)^{1/2} + \av{f-g}_p$. The topology induced by this metric on $D(q)$  is called the {\em form topology}. As is well known in the case $p=2$, closedness of a quadratic form is equivalent to  lower semicontinuity. According to Lemma~\cite[Lemma~A.4]{Schmi2}, the same is true for quadratic forms on any complete metrizable topological vector space. Since this is  important for us we formulate it as a lemma. 

\begin{lemma}
 Let $p \in \{0,2\}$ and let $q \colon L^p(X,\mu) \to [0,\infty]$ be a quadratic form. The following assertions are equivalent. 
 \begin{enumerate}[(i)]
  \item $q$ is closed. 
  \item $q$ is lower semicontinuous, i.e., for all $(f_n)$ in $L^p(X,\mu)$ the convergence $f_n \to f$ in $L^p(X,\mu)$ implies 
  $$q(f) \leq \liminf_{n \to \infty} q(f_n).$$
 \end{enumerate}
  If $p = 2$, both are equivalent to:
  \begin{enumerate}[(iii)]
   \item  $q$ is weakly lower semicontinuous, i.e., for all $(f_n)$ in $L^2(X,\mu)$ the convergence $f_n \to f$ weakly in $L^2(X,\mu)$ implies 
  $$q(f) \leq \liminf_{n \to \infty} q(f_n).$$
  \end{enumerate}
 \end{lemma}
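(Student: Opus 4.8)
The plan is to treat the two ``easy'' halves by hand and to organize the logic so that the genuinely delicate implication is needed only in the Hilbert space case $p=2$, where it comes essentially for free from the weak version (iii), while the corresponding statement for $p=0$ is imported from the general result for complete metrizable topological vector spaces cited just above. Concretely, I would prove (ii)$\Rightarrow$(i) directly for both $p\in\{0,2\}$, then establish the cycle (i)$\Rightarrow$(iii)$\Rightarrow$(ii) for $p=2$, and finally invoke \cite[Lemma~A.4]{Schmi2} to supply (i)$\Rightarrow$(ii) for $p=0$.

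First I would prove (ii)$\Rightarrow$(i). Let $(f_n)$ be $d_q$-Cauchy in $D(q)$. Since $\av{f_n-f_m}_p\le d_q(f_n,f_m)$ and $L^p(X,\mu)$ is complete, we have $f_n\to f$ in $L^p(X,\mu)$ for some $f$; moreover $q(f_n-f_m)^{1/2}\to 0$ as $n,m\to\infty$, using that $q^{1/2}$ is a seminorm on $D(q)$ (a consequence of the Cauchy--Schwarz inequality for the bilinear form $q(\cdot,\cdot)$). Fixing $m$ and letting $n\to\infty$, lower semicontinuity applied to $f_n-f_m\to f-f_m$ gives $q(f-f_m)\le\liminf_n q(f_n-f_m)$, which becomes arbitrarily small as $m\to\infty$. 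Hence $f-f_m\in D(q)$ for large $m$, so $f=(f-f_m)+f_m\in D(q)$, and $d_q(f_m,f)\to0$. This argument is verbatim the same for $p=0$ and $p=2$.

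For $p=2$ I would then close the loop through (iii). The implication (iii)$\Rightarrow$(ii) is immediate, since strong $L^2$-convergence implies weak $L^2$-convergence. For (i)$\Rightarrow$(iii), closedness makes $D(q)$ a Hilbert space $H_q$ for the scalar product $\as{f,g}_q=q(f,g)+\as{f,g}$, and the inclusion $H_q\hookrightarrow L^2(X,\mu)$ is continuous because $\av{f}_2\le\as{f,f}_q^{1/2}$. Given $f_n\rightharpoonup f$ weakly in $L^2$ with $L:=\liminf_n q(f_n)<\infty$, I pass to a subsequence with $q(f_n)\to L$; then $(f_n)$ is bounded in $L^2$ (weakly convergent sequences are bounded) and bounded in $q$, hence bounded in $H_q$, so a further subsequence converges weakly in $H_q$ to some $g$. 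Continuity of the inclusion transports this to weak $L^2$-convergence of the subsequence, and uniqueness of weak limits forces $g=f\in D(q)$. Since $q^{1/2}$ is a continuous seminorm on $H_q$, it is weakly lower semicontinuous there, whence $q(f)^{1/2}=q(g)^{1/2}\le\liminf_k q(f_{n_k})^{1/2}=L^{1/2}$. This gives (iii) and, combined with (ii)$\Rightarrow$(i), the equivalence of all three for $p=2$.

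It remains to handle (i)$\Leftrightarrow$(ii) for $p=0$. Here $L^0(X,\mu)$ is a complete metrizable topological vector space, as recorded in the preliminaries, so both implications are the special case of \cite[Lemma~A.4]{Schmi2}, which I would simply cite. The hard part will be precisely the implication closedness $\Rightarrow$ lower semicontinuity: for $p=2$ it rested on weak sequential compactness of bounded sets, a tool unavailable in $L^0$. The substitute, which is the actual content of \cite[Lemma~A.4]{Schmi2}, is to show that the sublevel sets $\{q\le c\}$ are closed in $L^0(X,\mu)$ by extracting suitable convex combinations of a bounded-in-form sequence (a Mazur/Banach--Saks type argument in the $F$-space $D(q)$) and using that \emph{tail} convex combinations of an a.e.\ convergent sequence again converge a.e., so that the limit is identified with $f$ while convexity of $q$ keeps the form values below $c$.
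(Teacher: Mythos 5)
Your proposal is correct, but it is organized quite differently from the paper's treatment: the paper gives no proof of this lemma at all --- it records the case $p=2$ as classical and cites \cite[Lemma~A.4]{Schmi2} for the general statement on complete metrizable topological vector spaces, which covers both implications for $p\in\{0,2\}$ at once. You instead prove everything that can be proved by elementary means --- (ii)$\Rightarrow$(i) for both values of $p$ by the standard Cauchy-sequence argument (which indeed only uses completeness of $L^p$, continuity of addition, and the triangle inequality for $q^{1/2}$), and the cycle (i)$\Rightarrow$(iii)$\Rightarrow$(ii) for $p=2$ via weak compactness of bounded sets in the Hilbert space $\bigl(D(q),\, q(\cdot,\cdot)+\langle\cdot,\cdot\rangle\bigr)$ --- and you invoke the external reference only for the single implication (i)$\Rightarrow$(ii) in the case $p=0$. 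These hand-proved steps are sound. What your organization buys is transparency: it localizes the citation to exactly the implication where the difficulty flagged in the paper's remark lives, namely deducing lower semicontinuity from closedness in the non-locally-convex space $L^0(X,\mu)$, and it makes the $p=2$ statement fully self-contained; the cost is nil, since the citation you retain is the same one the paper uses for the entire lemma. One caveat: your closing sketch of how \cite[Lemma~A.4]{Schmi2} ought to go (tail convex combinations, a Banach--Saks type extraction) is speculation that your proof does not rely on; if one wanted to carry it out, the extraction must be performed in the semi-inner-product space $(D(q),q)$ --- where Hilbert-space geometry is available --- rather than in $L^0(X,\mu)$ itself, precisely because $L^0(X,\mu)$ is not locally convex, which is the point of the paper's remark.
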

\begin{remark}
What makes this lemma nontrivial is that in general $L^0(X,\mu)$ is not a locally convex vector space. Hence, one can not just use a modified version of the Hilbert space proof. 
\end{remark}

A quadratic form $q'$ on $\lpm$ is called an extension of $q$ if $D(q) \subseteq D(q')$ and $q' = q$ on $D(q)$. We call $q$ {\em closable} if it possesses a closed extension. As for closedness the standard characterization of closability extends from the Hilbert space case to $L^0(X,\mu)$, see \cite[Lemma~A.3]{Schmi2}. Again there is some difficulty because $L^0(X,\mu)$ is not locally convex.

\begin{lemma}\label{lemma:closability}
 Let $p \in \{0,2\}$ and let $q \colon L^p(X,\mu) \to [0,\infty]$ be a quadratic form. The following assertions are equivalent. 
 \begin{enumerate}[(i)]
  \item $q$ is closable. 
  \item $q$ is lower semicontinuous on its domain, i.e., for all $(f_n)$ in $D(q)$  and $f \in D(q)$ the convergence $f_n \to f$ in $L^p(X,\mu)$ implies 
  $$q(f) \leq \liminf_{n \to \infty} q(f_n).$$
  \end{enumerate}
  In this case, $q$ possesses a smallest closed extension $\bar q \colon L^p(X,\mu) \to [0,\infty]$, which is given by
  $$\bar q (f) = \begin{cases}
                  \lim\limits_{n \to \infty} q(f_n)  &\text{if there ex. $q$-Cauchy sequence } (f_n) \text{ with } f_n \to f \text{ in }L^p(X,\mu)\\
                  \infty &\text{else}
                 \end{cases}.
$$
\end{lemma}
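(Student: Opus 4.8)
The plan is to prove the two implications separately and then identify the displayed formula as the smallest closed extension, the preceding lemma (closedness $=$ lower semicontinuity) being the bridge between the two formulations.

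For \textbf{(i) $\Rightarrow$ (ii)}, suppose $q$ is closable and let $q'$ be a closed extension. By the preceding lemma $q'$ is lower semicontinuous on $L^p(X,\mu)$. Given $(f_n)$ in $D(q)$ and $f \in D(q)$ with $f_n \to f$ in $L^p(X,\mu)$, we have $f_n, f \in D(q) \subseteq D(q')$ and $q' = q$ there, so $q(f) = q'(f) \le \liminf_n q'(f_n) = \liminf_n q(f_n)$, which is (ii).

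The core of \textbf{(ii) $\Rightarrow$ (i)} is the following observation, which is the one place where hypothesis (ii) enters and which I expect to be the main obstacle: \emph{if $(f_n)$ in $D(q)$ is Cauchy with respect to the seminorm $q(\cdot)^{1/2}$ and $f_n \to 0$ in $L^p(X,\mu)$, then $q(f_n) \to 0$.} To see this, note that $(q(f_n)^{1/2})_n$ converges to some $L \ge 0$; fixing $\eps > 0$ and $N$ with $q(f_n - f_m) < \eps$ for $n,m \ge N$, for each fixed $n \ge N$ the functions $f_n - f_m \in D(q)$ converge to $f_n \in D(q)$ in $L^p(X,\mu)$ as $m \to \infty$, so (ii) yields
\[
q(f_n) \le \liminf_{m \to \infty} q(f_n - f_m) \le \eps .
\]
Hence $L \le \eps^{1/2}$ for every $\eps$, i.e. $L = 0$. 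The subtlety here is that, since $L^0(X,\mu)$ is not locally convex, one cannot mimic the Hilbert-space proof (which realizes the closure as a subspace of $L^2$ via weak compactness and orthogonal projection); the argument must stay purely sequential, and the estimate above is exactly what replaces those tools.

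With this lemma in hand I would first check that $\bar q$ is well defined: if $(f_n)$ and $(g_n)$ are two $q$-Cauchy sequences converging in $L^p(X,\mu)$ to the same $f$, then $(f_n - g_n)$ is $q$-Cauchy and tends to $0$ in $L^p(X,\mu)$, so $q(f_n - g_n) \to 0$ by the lemma, whence $\lim_n q(f_n) = \lim_n q(g_n)$ via $|q(f_n)^{1/2} - q(g_n)^{1/2}| \le q(f_n - g_n)^{1/2}$. Comparing an arbitrary approximating sequence with the constant sequence shows $\bar q = q$ on $D(q)$, so $\bar q$ extends $q$; and using that $L^0(X,\mu)$ is a topological vector space together with the parallelogram law for $q$ one verifies that $D(\bar q)$ is a subspace and $\bar q$ is a quadratic form. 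Finally, for closedness I would realize $D(\bar q)$ as the image in $L^p(X,\mu)$ of the abstract completion $\hat H$ of $(D(q),d_q)$: the inclusion $D(q) \hookrightarrow L^p(X,\mu)$ is $1$-Lipschitz from $d_q$ to $\av{\cdot}_p$, hence extends to a continuous map $\iota \colon \hat H \to L^p(X,\mu)$ by completeness of $L^p(X,\mu)$, and the lemma above says precisely that $\iota$ is injective. Identifying $\hat H$ with $\iota(\hat H) = D(\bar q)$ turns $d_{\bar q}$ into the complete completion metric, so $\bar q$ is closed. That $\bar q$ is the \emph{smallest} closed extension then follows because any closed extension $q'$ is complete: a $q$-Cauchy sequence $(f_n) \to f$ is $d_{q'}$-Cauchy, its $d_{q'}$-limit must equal $f$, so $f \in D(q')$ and $q'(f) = \lim_n q(f_n) = \bar q(f)$, giving $D(\bar q) \subseteq D(q')$ with $q' = \bar q$ on $D(\bar q)$.
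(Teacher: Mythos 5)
Your proposal is correct. One thing to be aware of: the paper itself contains no proof of this lemma — it is stated with a pointer to \cite[Lemma~A.3]{Schmi2}, together with the remark that the difficulty lies in $L^0(X,\mu)$ not being locally convex — so there is no in-paper argument to compare against, and your write-up supplies exactly the kind of self-contained proof the paper delegates. Your argument is sound precisely because it stays purely sequential: the key lemma (a $q$-Cauchy sequence tending to $0$ in $L^p(X,\mu)$ satisfies $q(f_n)\to 0$) uses only hypothesis (ii), the triangle inequality for the seminorm $q^{1/2}$, and continuity of addition in the topological vector space $L^p(X,\mu)$; and your realization of $D(\bar q)$ as the image of the abstract completion $\hat H$ of $(D(q),d_q)$ under the extended inclusion map, with injectivity again supplied by the key lemma, never invokes weak compactness, orthogonal projection, or any other locally convex tool — which is exactly what the setting demands. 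The steps you leave as sketches (well-definedness of $\bar q$, that $D(\bar q)$ is a subspace on which $\bar q$ satisfies the parallelogram identity, the isometric identification making $d_{\bar q}$ complete, and minimality among closed extensions) are all routine and correctly indicated; in particular the minimality argument, that any closed extension $q'$ absorbs every $q$-Cauchy sequence with an $L^p$-limit and agrees with $\bar q$ there, is exactly right.
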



Sometimes closedness of a quadratic form on $L^p(X,\mu)$ is equivalent to $(D(q)/\ker q,q)$ being a Hilbert space (i.e. to ensure completeness the convergence with respect to $\av{\cdot}_p$ can be omitted). The following theorem shows that this is precisely the case if  $(D(q),q)$ continuously embeds into a quotient of $L^p(X,\mu)$. It is a special case of \cite[Theorem~1.38]{Schmi3}. Since here we only deal with metrizable topological vector spaces, the proof simplifies and we include it for the convenience of the reader.   

\begin{theorem}\label{theorem:2 imply the third}
 Let $p \in \{0,2\}$ and let $q\colon L^p(X,\mu) \to [0,\infty]$ be a quadratic form. Each two of the following statements imply the third.
 \begin{enumerate}[(i)]
  \item $q$ is closed.
  \item $\ker q$ is closed in $L^p(X,\mu)$ and the embedding 
  $$\iota\colon (D(q)/\ker q, q) \to L^p(X,\mu) / \ker q,\quad f \mapsto f$$
  is continuous. Here, $L^p(X,\mu) / \ker q$ is equipped with the quotient topology.  
  \item $(D(q)/\ker q,q)$ is a Hilbert space. 
 \end{enumerate}
\begin{proof}

In both cases $p = 0, 2$, we equipped $L^p(X,\mu)$ with the translation invariant metric $d_p(f,g) = \av{f-g}_p$. If $F \subseteq L^p(X,\mu)$ is a closed subspace, then
$$d_F(f + F,g + F) = \inf \{\av{f-g + h}_p \mid h \in F\}$$
is a translation invariant metric inducing the quotient topology on $L^p(X,\mu)/F$. With respect to this metric the quotient space is complete.

 (i) \& (ii) $\Rightarrow$ (iii):  Let $(f_n + \ker q)$ be $q$-Cauchy. Since the metrics are translation invariant,  (ii) implies that $(f_n + \ker q)$ is Cauchy in  $L^p(X,\mu)/\ker q$. By completeness of the quotient we obtain $f \in L^p(X,\mu)$ such that $f_n + \ker q \to f + \ker q$ in $L^p(X,\mu)/\ker q$ and  hence $f_n + k_n \to f$ in $L^p(X,\mu)$ for some $k_n \in \ker q$. Using lower semicontinuity guaranteed by (i), we obtain 
 $$q(f - f_n) \leq \liminf_{m \to \infty} q(f_m + k_m - f_n) = \liminf_{m \to \infty} q(f_m - f_n).  $$
 This yields $f_n + \ker q \to f + \ker q$ with respect to $q$.

 (i) \& (iii) $\Rightarrow$ (ii): Lower semicontinuity implies that $\ker q$ is closed. Since  $(D(q)/\ker q,q)$ is complete and by the closedness of $\ker q$ the space $L^p(X,\mu) / \ker q$ is complete, we can use the closed graph theorem (which holds for maps between complete metrizable topological vector spaces, see e.g. \cite{Hus}). Hence, it suffices to show that the map $\iota$ is closed. To this end, consider a sequence $(f_n + \ker q)$ with $f_n + \ker q \to f + \ker q$ with respect to $q$ and $f_n + \ker q \to g + \ker q$ in $L^p(X,\mu)/\ker q$. Using lower semicontinuity guaranteed by (i) (which passes to the quotient space), we obtain 
 $$q(f-g) \leq \liminf_{n \to \infty} q(f - f_n) = 0.$$
 This yields $f + \ker q = g + \ker q$ and closedness of $\iota$ is established.

 (ii) \& (iii) $\Rightarrow$ (i): Let $(f_n)$ be $L^p(X,\mu)$-Cauchy and $q$-Cauchy. We need to show that $(f_n)$ converges with respect to the form topology. By completeness of $L^p(X,\mu)$ there exists $f \in L^p(X,\mu)$ with $f_n \to f$.  Since also $(D(q)/\ker q,q)$ is a Hilbert space, we have $q(g -f_n) \to 0$ for some $g \in D(q)$.   Then (ii) implies $f_n + k_n \to g$ in $L^p(X,\mu)$ for some $k_n \in \ker q$.  Altogether, $(k_n)$ converges  in $L^p(X,\mu)$ to $k = g-f$. Since $\ker q$ is closed and $k_n \in \ker q$, we obtain $g-f \in \ker q$. This implies $q(f-f_n) \to 0$ and we obtain $f_n \to f$ with respect to the form topology.   
\end{proof}

\end{theorem}

\subsection{Extensions of positivity preserving operators}\label{subsection:extension of positivity preserving operators}

Let  $T \colon L^p(X,\mu) \to L^p(X,\mu)$ be a positivity preserving operator, i.e., $f \geq 0$ implies $Tf \geq 0$. We abuse notation and extend it to an operator $T \colon L^+(X,\mu) \to L^+(X,\mu)$ by letting 
$$T f = \lim_{n \to \infty} T f_n$$
for a sequence $(f_n)$ in $L^p_+(X,\mu)$ with $f_n \nearrow f$ $\mu$-a.s. The limit always exists $\mu$-a.s. and   does not depend on the choice of the sequence $(f_n)$ (here we use the continuity of positivity preserving operators, see e.g. \cite[Proposition 1.3.5]{MN}). This extension is a linear map on the cone $L^+(X,\mu)$.  It follows from the definition that $T$ satisfies Fatou's lemma, i.e., for any sequence $(f_n)$ in $L^+(X,\mu)$ we have
$$T \liminf_{n \to \infty}f_n \leq \liminf_{n \to \infty} Tf_n.$$

If $f \in L^0(X,\mu)$ such that $T|f| \in L^0(X,\mu)$, then we set $Tf = Tf_+ - Tf_-$. This yields a linear operator on $L^0(X,\mu)$ with domain $\{f \in L^0(X,\mu) \mid T|f|  \in L^0(X,\mu)\}$, which extends the given operator $T$. Moreover, there is also a version of Lebesgue's dominated convergence theorem for the extension. If $g \in L_+^0(X,\mu)$ such that $Tg \in L^0_+(X,\mu)$, then $f_n \to f$ in $L^0(X,\mu)$ and $|f_n| \leq g$ imply $T|f| \in L^0(X,\mu)$ and $Tf = \lim_{n \to \infty} Tf_n$ in $L^0(X,\mu)$.
%

%

%

%


\section{The  Beurling-Deny criteria, excessive functions and extended forms} \label{section:BD}

In this section we review properties of forms satisfying the first Beurling-Deny criterion. Most of the results here are known but we include proofs for two reasons: 1)~Some of the proofs in the literature use parts of the theory we develop in subsequent sections. This can lead to intransparent cross-references, which we try to avoid. 2)~Making consequent use of lower semicontinuity and excessive functions leads to shorter proofs of some known results.

\subsection{Basics and excessive functions}

Let $p \in \{0,2\}$ and let $q \colon L^p(X,\mu) \to [0,\infty] $ be a  quadratic form. We say that $q$ satisfies the {\em first Beurling-Deny criterion} if $q(|f|) \leq q(f)$ for all $f \in L^p(X,\mu)$. A function $h \in L^0(X,\mu)$ (with $h_- \in L^2(X,\mu)$ in the case $p = 2$) is called {\em $q$-excessive} if $q(f \wedge h) \leq q(f)$ for all $f \in L^p(X,\mu)$, where $f \wedge h = \min\{f,h\}$. If the constant function $1$ is $q$-excessive, then $q$ is said to satisfy the {\em second Beurling-Deny criterion}. If, additionally, $p = 2$ and $D(q)$ is dense in $L^2(X,\mu)$, then $q$ is called {\em Dirichlet form}. As is common in the literature, below $\cE$ will denote a Dirichlet form on $L^2(X,\mu)$.

\begin{remark}
\begin{enumerate}[(a)]
 \item For closed forms the existence of a nonnegative excessive function implies the first Beurling-Deny criterion. In particular, closed forms with the second Beurling-Deny criterion satisfy the first. The question which forms satisfying the first Beurling-Deny criterion admit an excessive function is of interest and will be discussed in detail below.
 \item If $p = 2$,  we could also look at lower semibounded quadratic forms. There are two reasons why if one assumes one of the Beurling-Deny criteria, it suffices to consider nonnegative forms: 1) A lower semibounded quadratic form satisfying the second Beurling-Deny is always nonnegative. 2) A quadratic form $q$ satisfies the first Beurling-Deny criterion if and only if for $\alpha \in \R$ the quadratic form $q + \alpha \av{\cdot}^2_2$ satisfies the first Beurling-Deny criterion. Hence, $q$ is either nonnegative anyway or one can add a multiple of the square of the $L^2$-norm to obtain a nonnegative form without loosing Beurling-Deny criteria.   
\end{enumerate}
\end{remark} 
\begin{lemma}\label{lemma:form domain lattice}
 Let $p \in \{0,2\}$ and let $q \colon L^p(X,\mu) \to [0,\infty] $  be a quadratic form satisfying the first Beurling-Deny criterion.  For all $f,g \in L^p(X,\mu)$ 
 $$q(f \wedge g) + q(f \vee g) \leq q(f) + q(g).$$
In particular, $D(q)$ is a lattice and functions in $\ker q$ are $q$-excessive.
 \end{lemma}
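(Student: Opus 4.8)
The plan is to reduce the inequality to the parallelogram law for quadratic forms, exploiting the elementary lattice identities
$$f \wedge g = \tfrac12\left(f+g - |f-g|\right), \qquad f \vee g = \tfrac12\left(f+g+|f-g|\right).$$
First I would dispose of the trivial case: if $q(f) = \infty$ or $q(g) = \infty$ the right-hand side is $\infty$ and there is nothing to prove, so from now on I assume $f,g \in D(q)$. Since $D(q)$ is a vector space, $f-g \in D(q)$, and the first Beurling-Deny criterion yields $q(|f-g|) \leq q(f-g) < \infty$, so that $|f-g| \in D(q)$ as well.

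Setting $u = \tfrac12(f+g)$ and $v = \tfrac12|f-g|$, both functions lie in $D(q)$, and hence so do $f \wedge g = u-v$ and $f \vee g = u+v$. On its domain $q$ is a genuine finite-valued quadratic form, so the parallelogram law applies and gives
$$q(f\wedge g) + q(f \vee g) = q(u-v) + q(u+v) = 2q(u) + 2q(v).$$
By quadratic homogeneity $2q(u) = \tfrac12 q(f+g)$ and $2q(v) = \tfrac12 q(|f-g|)$, and invoking the first Beurling-Deny criterion once more together with the parallelogram law $q(f+g)+q(f-g) = 2q(f)+2q(g)$ I would conclude
$$2q(u)+2q(v) \leq \tfrac12\left(q(f+g)+q(f-g)\right) = q(f)+q(g),$$
which is the assertion. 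The only point requiring care is precisely the step that legitimizes the polarization, namely verifying $|f-g| \in D(q)$; this is where the hypothesis enters, and it is essentially the single nontrivial ingredient — everything else is the parallelogram law and homogeneity.

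For the two consequences I would argue as follows. Taking $f,g \in D(q)$, the proved inequality bounds $q(f \wedge g) + q(f \vee g)$ by the finite number $q(f)+q(g)$; since both summands on the left are nonnegative, each is finite, so $f \wedge g, f \vee g \in D(q)$ and $D(q)$ is a lattice. Finally, if $h \in \ker q$, then applying the inequality with $h$ in place of $g$ gives, for every $f \in L^p(X,\mu)$,
$$q(f \wedge h) \leq q(f \wedge h) + q(f \vee h) \leq q(f) + q(h) = q(f),$$
so $h$ is $q$-excessive; note that $h \in D(q) \subseteq L^p(X,\mu)$ automatically satisfies the standing integrability requirement in the definition of an excessive function.
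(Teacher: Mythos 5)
Your proof is correct and follows essentially the same route as the paper: both rest on the decomposition $f\wedge g$, $f\vee g = \tfrac12\bigl(f+g \mp |f-g|\bigr)$, the parallelogram identity, and the first Beurling--Deny criterion applied to $|f-g|$. The only cosmetic difference is that you reduce to the finite case and work inside $D(q)$ before invoking the parallelogram law, whereas the paper applies the identity directly as an extended-valued statement; the extra care you take with domains (and your explicit derivation of the two ``in particular'' consequences) is harmless and arguably makes the argument more self-contained.
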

\begin{proof}
 The parallelogram identity  and the first Beurling-Deny criterion yield 
 \begin{align*}
  2q(f \wedge g) + 2q(f \vee g) &= q(f \wedge g + f \vee g) + q(f \wedge g - f \vee g)\\
  &= q(f + g) + q(|f-g|)\\
  &\leq q(f + g) + q(f-g)\\
  &= 2q(f) + 2q(g). \hfill \qedhere
 \end{align*}
\end{proof}
The following lemma shows that the set of nonnegative excessive functions is closed under local convergence in measure.
\begin{lemma}\label{lemma:limit of excessive functions is excessive}
 Let $p \in \{0,2\}$ and let  $q\colon L^p(X,\mu) \to [0,\infty]$ be a closed quadratic form. Let $(h_n)$ be a sequence of nonnegative $q$-excessive functions converging in $L^0(X,\mu)$ to $h$. Then $h$ is $q$-excessive. 
\end{lemma}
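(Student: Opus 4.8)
The plan is to fix an arbitrary $f \in L^p(X,\mu)$ and verify the defining inequality $q(f \wedge h) \le q(f)$ by passing to the limit in the corresponding inequalities for the $h_n$, exploiting that closedness of $q$ is the same as lower semicontinuity (the first lemma of the Preliminaries). First I would check that $h$ is an admissible candidate for being excessive. Since each $h_n \ge 0$ and $h_n \to h$ in $L^0(X,\mu)$, the limit satisfies $h \ge 0$ $\mu$-a.e.; in particular, in the case $p=2$ we have $h_- = 0 \in L^2(X,\mu)$, so the requirement $h_- \in L^2(X,\mu)$ is automatic, and $h \in L^0(X,\mu)$ holds by assumption.

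The next step is a continuity observation for the lattice operation. Using the subsequence characterization of convergence in $L^0(X,\mu)$ recalled in the Preliminaries, I would argue that $f \wedge h_n \to f \wedge h$ in $L^0(X,\mu)$: any subsequence of $(h_n)$ has a further subsequence converging to $h$ $\mu$-a.e., and along that sub-subsequence $f \wedge h_n \to f \wedge h$ $\mu$-a.e. by continuity of $\min$, which is exactly the criterion for convergence in $L^0(X,\mu)$. In the case $p=0$ this already suffices: lower semicontinuity of $q$ with respect to $L^0$-convergence together with the excessivity of each $h_n$ gives
$$q(f \wedge h) \le \liminf_{n \to \infty} q(f \wedge h_n) \le \liminf_{n \to \infty} q(f) = q(f).$$

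In the case $p=2$ the form is lower semicontinuous only for $L^2$-convergence, so I would first upgrade the $L^0$-convergence to $L^2$-convergence. The key pointwise estimate is $|f \wedge h_n| \le |f|$: where $f \le 0$ one has $f \wedge h_n = f$ since $h_n \ge 0$, and where $f > 0$ one has $0 \le f \wedge h_n \le f$, so in either case $|f \wedge h_n| \le |f| \in L^2(X,\mu)$. Combining this domination with dominated convergence applied along $\mu$-a.e.-convergent subsequences, and invoking once more the subsequence characterization, yields $f \wedge h_n \to f \wedge h$ in $L^2(X,\mu)$; lower semicontinuity then gives the same chain of inequalities as above. I expect this $p=2$ step to be the main obstacle, since $L^0$-convergence of the $h_n$ does not in general transfer to the topology in which $q$ is lower semicontinuous; the resolution is precisely the uniform domination $|f \wedge h_n| \le |f|$, which makes dominated convergence available regardless of the $L^2$-behaviour of the $h_n$ themselves.
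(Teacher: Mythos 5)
Your proof is correct and follows essentially the same route as the paper: establish $f \wedge h_n \to f \wedge h$ in $L^p(X,\mu)$ (immediately for $p=0$, via dominated convergence for $p=2$) and then conclude by lower semicontinuity of the closed form together with the excessivity of each $h_n$. Your domination $|f \wedge h_n| \le |f|$ plays exactly the role of the paper's decomposition $f \wedge h_n = f_+ \wedge h_n - f_-$, so the two arguments are the same in substance.
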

\begin{proof}
 Since $h_n$ and $h$ are nonnegative, we have $f\wedge h_n = f_+ \wedge h_n - f_- \to f_+ \wedge h - f_- = f\wedge h$ in $L^p(X,\mu)$ (this is clear for $p = 0$, for $p = 2$ it follows from Lebesgue's dominated convergence theorem). Hence, lower semicontinuity of $q$ on $L^p(X,\mu)$ implies
 $$q(f \wedge h) \leq \liminf_{n \to \infty} q(f\wedge h_n) \leq q(f).$$
 This shows that $h$ is $q$-excessive.
\end{proof}

The following lemma is due to Ancona, see \cite[Proposition~4]{Anc}. It shows that for forms satisfying the first Beurling-Deny criterion taking the absolute value is continuous with respect to the form topology (which is induced by the metric $d_q(f,g) = q(f-g)^{1/2} + \av{f-g}_p$ on $D(q)$). The setting of \cite{Anc} is a bit different  and therefore we explain why Ancona's proof can be applied in our situation. 
 
 \begin{lemma}\label{lemma:ancona}
 For $p \in \{0,2\}$ let $q$ be a closed quadratic form on $L^p(X,\mu)$ satisfying the first Beurling-Deny criterion. Then  
 $$\lvert\cdot \rvert \colon D(q) \to D(q), \quad f \mapsto |f|$$
 is continuous with respect to the form topology.
 \end{lemma}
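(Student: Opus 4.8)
The goal is to show that $|\cdot|\colon (D(q),d_q)\to (D(q),d_q)$ is continuous, i.e. that $f_n\to f$ with respect to the form topology implies $|f_n|\to|f|$ with respect to the form topology. Since $\bigl\lvert |f_n|-|f|\bigr\rvert\le |f_n-f|$ pointwise, convergence $\av{f_n-f}_p\to 0$ immediately gives $\av{\,|f_n|-|f|\,}_p\to 0$, so the only thing to verify is $q(|f_n|-|f|)\to 0$. The first Beurling--Deny criterion already tells us $q(|f_n|)\le q(f_n)$, so the sequence $(|f_n|)$ is bounded in the form, but boundedness is far from convergence; the real content is to upgrade convergence of the ``energies'' to convergence in the form norm.

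\textbf{Main steps.}
First I would record the convergence of energies: from lower semicontinuity of $q$ (Lemma on closedness $\Leftrightarrow$ lower semicontinuity) applied to $|f_n|\to|f|$ in $L^p(X,\mu)$ one gets $q(|f|)\le\liminf_n q(|f_n|)$, while the first Beurling--Deny criterion combined with $q(f_n)\to q(f)$ (which holds because $f_n\to f$ in the form) gives $\limsup_n q(|f_n|)\le\limsup_n q(f_n)=q(f)$; since also $q(|f|)=q(f)$ by a separate lower-semicontinuity/Beurling--Deny argument, one concludes $q(|f_n|)\to q(|f|)$. Second, the key algebraic input is the lattice inequality from Lemma~\ref{lemma:form domain lattice}, which I would use in the polarized form to control the bilinear term $q(|f_n|,|f|)$. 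Concretely, expanding
$$q(|f_n|-|f|)=q(|f_n|)+q(|f|)-2q(|f_n|,|f|),$$
and the energies on the right converge to $2q(|f|)$, so it suffices to show $\liminf_n q(|f_n|,|f|)\ge q(|f|)$. This is exactly where Ancona's argument enters: one uses the parallelogram/lattice estimate together with lower semicontinuity applied to the sequences $|f_n|\wedge|f|$ and $|f_n|\vee|f|$ (both of which converge in $L^p$ to $|f|$) to bound $q(|f_n|\wedge|f|)+q(|f_n|\vee|f|)\le q(|f_n|)+q(|f|)$ from above while $\liminf$ of the left side is $\ge 2q(|f|)$ by lower semicontinuity. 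Feeding this back into the cross-term estimate pins down $q(|f_n|,|f|)\to q(|f|)$ and hence $q(|f_n|-|f|)\to 0$.

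\textbf{Transferring Ancona's proof to the present setting.}
The paper explicitly flags that Ancona's framework is slightly different, so the remaining task is to check that his computation only uses the parallelogram identity, the first Beurling--Deny criterion, and lower semicontinuity of $q$ on $L^p(X,\mu)$ --- all of which are available here for both $p=0$ and $p=2$. The place to be careful is the passage from weak convergence of bilinear forms (natural in the Hilbert space $p=2$ case via weak lower semicontinuity, statement (iii) of the closedness lemma) to the $L^0$ setting, where no inner-product or weak-compactness structure is available. My expectation is that the argument can be arranged to avoid weak limits entirely and rely only on the one-sided estimates above, so that the identical proof works verbatim in $L^0(X,\mu)$; verifying this uniformity across $p\in\{0,2\}$, and in particular that the lattice operations $\wedge,\vee$ interact correctly with convergence in the non-locally-convex space $L^0$, is the main obstacle. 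The convergences $|f_n|\wedge|f|\to|f|$ and $|f_n|\vee|f|\to|f|$ in $L^p$ follow (for $p=0$ directly, for $p=2$ by dominated convergence as in Lemma~\ref{lemma:limit of excessive functions is excessive}), so the lattice structure from Lemma~\ref{lemma:form domain lattice} and lower semicontinuity together should close the argument.
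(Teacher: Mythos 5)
Your reduction is fine as far as it goes: the $L^p$-part is immediate from $\bigl\lvert\,|f_n|-|f|\,\bigr\rvert \le |f_n-f|$, and everything boils down to controlling $q(|f_n|)$ and the cross term $q(|f_n|,|f|)$. But both steps you offer for this are broken. First, the identity $q(|f|)=q(f)$ on which your sandwich argument rests is simply false: the first Beurling--Deny criterion gives only $q(|f|)\le q(f)$, and the inequality is strict in general --- e.g.\ for the closed form $q(f)=(f(1)-f(2))^2$ on a two-point space and $f=(1,-1)$ one has $q(f)=4$ but $q(|f|)=0$. Consequently the chain $q(|f|)\le \liminf_n q(|f_n|)\le \limsup_n q(|f_n|)\le \lim_n q(f_n)=q(f)$ does not pin down $\lim_n q(|f_n|)$ at all. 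Second, the proposed control of the cross term is circular: combining $2q(|f|)\le \liminf_n\bigl[q(|f_n|\wedge|f|)+q(|f_n|\vee|f|)\bigr]$ (lower semicontinuity) with $q(|f_n|\wedge|f|)+q(|f_n|\vee|f|)\le q(|f_n|)+q(|f|)$ (Lemma~\ref{lemma:form domain lattice}) yields exactly $q(|f|)\le\liminf_n q(|f_n|)$, i.e.\ lower semicontinuity again, and gives no information on $q(|f_n|,|f|)$ or on $\limsup_n q(|f_n|)$. All of your estimates point in the same (easy) direction; the entire content of the lemma is the opposite inequality $\limsup_n q(|f_n|)\le q(|f|)$, which nothing in your proposal delivers.

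This is precisely why the result (due to Ancona) is nontrivial and why the paper does not attempt a soft argument. The paper's proof isolates the one structural ingredient that Ancona's computation in \cite[Proposition~4]{Anc} actually requires --- namely that a $q$-bounded sequence converging in $L^p(X,\mu)$ has its limit in $D(q)$ and converges $q$-weakly --- and verifies that this ingredient is available in both cases ($p=2$ is standard, $p=0$ is \cite[Lemma~A.5]{Schmi2}). Your stated hope of ``avoiding weak limits entirely'' goes in exactly the wrong direction: the weak-convergence lemma is the part that survives the passage to the non-locally-convex space $L^0(X,\mu)$, whereas the purely order-theoretic and lower-semicontinuity manipulations you propose cannot produce the needed upper bound on $\limsup_n q(|f_n|)$. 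To repair the proposal you would have to either reproduce Ancona's finer computation (which does use $q$-weak convergence of $|f_n|$, $f_n^{\pm}$) or find a genuinely new argument; as it stands, the key step is missing.
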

\begin{proof}
 The equations necessary for proving that $f_n \to f$ with respect to the form topology implies $|f_n| \to |f|$ with respect to $q$ are contained in the proof of \cite[Proposition~4]{Anc}. In contrast to our assumptions, the setting of \cite{Anc} requires that $(D(q),q)$ is a Hilbert space and that $(D(q),q)$ continuously embeds into $L^0(X,\mu)$. However, what Ancona really uses in the proof of \cite[Proposition~4]{Anc} is that if $(f_n)$ in $D(q)$ is $q$-bounded and $f_n \to f$ in $L^p(X,\mu)$, then $f \in D(q)$ and $f_n \to f$ $q$-weakly.  This is a standard result if $p = 2$ and proved in \cite[Lemma~A.5]{Schmi2} for $p = 0$.
%
%
%
%
%
%
%
%
\end{proof}
%
%

\begin{assumption}
 From now on we assume that all quadratic forms on $L^p(X,\mu)$, $p \in \{0,2\}$, are {\em densely defined}, i.e., $D(q)$ is dense in $L^p(X,\mu)$.
\end{assumption}

\begin{remark}
 For forms $q$ on $L^p(X,\mu)$, $p \in \{0,2\}$, satisfying the first Beurling-Deny criterion, this assumption is not a restriction. In this case, $D(q)$ is a lattice  and hence the $L^p(X,\mu)$-closure of $D(q)$ is a closed vector lattice in $L^p(X,\mu)$. Such lattices are given by $L^p(Y,\mathfrak B,\mu|_{\mathfrak B})$,  where $Y\subseteq X$ and $\mathfrak B$ is a $\sigma$-algebra on $Y$ contained in the original $\sigma$-algebra. Hence, $q$ can always be considered to be densely defined on some $L^p$-space.
\end{remark}

Let $q$ be a closed form on $L^2(X,\mu)$. For $\alpha > 0$ we define the quadratic form 
$$q_\alpha \colon L^2(X,\mu) \to [0,\infty], \quad q_\alpha(f) = q(f) + \alpha \av{f}_2^2.$$
By the Riesz representation theorem for $\alpha > 0$ and $f \in L^2(X,\mu)$ there exists a unique element $G_\alpha f \in D(q)$ such that 
$$q(G_\alpha f,g) + \alpha \as{G_\alpha f,g} = \as{f,g}$$
for all $g \in D(q)$. The induced family of self-adjoint operators $(G_\alpha)_{\alpha > 0}$ on $L^2(X,\mu)$ is called {\em resolvent family} of $q$. Since $q$ is densely defined, it corresponds to a unique nonnegative self-adjoint operator $L$, the so-called {\em generator of $q$} through $G_\alpha = (L+\alpha)^{-1}$, $\alpha > 0$. Moreover, it is strongly continuous, i.e., for $f \in L^2(X,\mu)$ we have $\alpha G_\alpha f \to f$, as $\alpha  \to \infty$. Using spectral calculus of $L$, we obtain a self-adjoint $C_0$-semigroup $(T_t)_{t > 0} = (e^{-tL})_{t >0}$ on $L^2(X,\mu)$, which we call {\em semigroup associated with $q$}.

It is well-known that $G_\alpha$, $\alpha > 0,$ and $T_t, t>0,$ are positivity preserving if and only if $q$ satisfies the first Beurling-Deny criterion. Moreover, $(G_\alpha)_{\alpha > 0}$ and $(T_t)_{t > 0}$ are Markovian (i.e. $0 \leq f \leq 1$ implies $0 \leq \alpha G_\alpha f \leq 1$ respectively $0 \leq T_t f\leq 1$) if and only if $q$ satisfies the second Beurling-Deny criterion. See e.g. \cite[Theorem XIII.50 and Theorem XIII.51]{RSIV}. We need the following extension of this result, which is basically taken from \cite[Proposition~4]{Kaj}.

\begin{lemma}\label{lemma:characterization of excessive functions}
 Let $q\colon L^2(X,\mu) \to [0,\infty]$ be a quadratic form satisfying the first Beurling-Deny criterion.  For   $h \in L_+^0(X,\mu)$ the following assertions are equivalent. 
 \begin{enumerate}[(i)]
  \item $h$ is $q$-excessive.
  \item  $\alpha G_\alpha h \leq h$ for all $\alpha > 0$.
  \item $T_t h \leq h$ for all $t > 0$.
 \end{enumerate}
If, additionally, $h \in D(q)$, this is equivalent to $q(h,g) \geq 0$ for all nonnegative $g \in D(q). $
\end{lemma}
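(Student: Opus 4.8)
The plan is to prove the equivalence of (i)--(iii) by running the cycle (i)$\Rightarrow$(ii)$\Rightarrow$(iii)$\Rightarrow$(i), and to treat the supplementary characterization for $h\in D(q)$ separately at the end. Since $h\in L_+^0(X,\mu)$ need not belong to $L^2(X,\mu)$, all three statements are read through the extensions $G_\alpha,T_t\colon L^+(X,\mu)\to L^+(X,\mu)$ of Subsection~\ref{subsection:extension of positivity preserving operators}. The recurring device is to reduce each implication to $L^2$-data by choosing, using $\sigma$-finiteness, functions $f_n\in L_+^2(X,\mu)$ with $f_n\nearrow h$ and then exploiting that the extensions are defined as increasing limits (so $G_\alpha f_n\nearrow G_\alpha h$ and $T_tf_n\nearrow T_th$). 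With this in hand I will never need $h$ itself to be in $L^2(X,\mu)$.

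For (i)$\Rightarrow$(ii) I would use the variational description of the resolvent: rewriting $q(G_\alpha f,g)+\alpha\as{G_\alpha f,g}=\as{f,g}$ shows that, for $f\in L^2(X,\mu)$, the element $\alpha G_\alpha f$ is the unique minimizer over $D(q)$ of $g\mapsto q(g)+\alpha\av{g-f}_2^2$. Fix $\alpha>0$ and let $f\in L_+^2(X,\mu)$ with $f\le h$; set $v=\alpha G_\alpha f\ge0$. Excessivity of $h$ gives $q(v\wedge h)\le q(v)<\infty$, so $v\wedge h\in D(q)$, and because $0\le f\le h$ one checks the pointwise bound $|v\wedge h-f|\le|v-f|$, whence $\av{v\wedge h-f}_2\le\av{v-f}_2$. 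Thus $v\wedge h$ is at least as good a competitor as $v$, and uniqueness of the minimizer forces $v\wedge h=v$, i.e. $\alpha G_\alpha f\le h$. Taking $f_n\nearrow h$ as above and passing to the increasing limit yields $\alpha G_\alpha h\le h$.

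For (ii)$\Rightarrow$(iii) I would pass through the Yosida approximations $T_t^{(\alpha)}=e^{-t\alpha}\sum_{k\ge0}\frac{(t\alpha^2)^k}{k!}G_\alpha^k$, which are positivity preserving because $G_\alpha$ is; iterating $\alpha G_\alpha h\le h$ gives $G_\alpha^k h\le\alpha^{-k}h$, and summing the series shows $T_t^{(\alpha)}h\le h$, so letting $\alpha\to\infty$ (first on $L^2$-data, then in the increasing limit) gives $T_th\le h$. The heart of the lemma is (iii)$\Rightarrow$(i), and this is the step I expect to be the main obstacle, since it is where the operator inequality $T_th\le h$ has to be converted back into the lattice inequality. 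Here I would use the bounded approximating forms $q^{(t)}(f)=\frac{1}{t}\as{(I-T_t)f,f}$, which are defined on all of $L^2(X,\mu)$ and increase to $q$ as $t\downarrow0$ by spectral calculus. For fixed $f\in L^2(X,\mu)$ write $u=f\wedge h$ and $p=(f-h)_+=f-u\ge0$; using self-adjointness of $T_t$ one gets
\begin{align*}
 q^{(t)}(f)-q^{(t)}(f\wedge h)=\tfrac{2}{t}\as{u-T_tu,\,p}+\tfrac{1}{t}\as{(I-T_t)p,\,p}.
\end{align*}
The second term is nonnegative because $T_t$ is an $L^2$-contraction, and the first is nonnegative because on $\{p>0\}=\{f>h\}$ one has $u=h$ while $T_tu\le T_th\le h$ by monotonicity and (iii); crucially, this uses that $u$ equals $h$ exactly on the support of $p$. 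Hence $q^{(t)}(f\wedge h)\le q^{(t)}(f)$ for every $t$, and letting $t\downarrow0$ gives $q(f\wedge h)\le q(f)$, which is (i).

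Finally, for the equivalence under the extra assumption $h\in D(q)$: if $h$ is excessive, testing with $f=h+tg$ for $t>0$ and nonnegative $g\in D(q)$ gives $q(h)\le q(h+tg)$, and dividing $2tq(h,g)+t^2q(g)\ge0$ by $t$ and letting $t\downarrow0$ yields $q(h,g)\ge0$. Conversely, if $q(h,g)\ge0$ for all nonnegative $g\in D(q)$, take $f\in D(q)$, put $u=(f-h)_+\in D(q)$, note $f\wedge h=f-u$ and $f\vee h=h+u$, and combine the lattice inequality $q(f\wedge h)+q(f\vee h)\le q(f)+q(h)$ of Lemma~\ref{lemma:form domain lattice} with $q(f\vee h)=q(h)+2q(h,u)+q(u)\ge q(h)$ to conclude $q(f\wedge h)\le q(f)$.
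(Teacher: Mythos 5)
Your proof is correct, but it takes a genuinely different route from the paper. The paper disposes of this lemma in two lines by invoking Ouhabaz's invariance criterion for closed convex sets under semigroups/resolvents (applied to the set $\{f \in L^2(X,\mu) \mid f \le h\}$, whose metric projection is $f \mapsto f\wedge h$), referring to \cite{Ouh} and to the proof of \cite[Proposition~4]{Kaj} for the details needed to pass from $h \in L^2$ to general $h \in L^0_+(X,\mu)$. You instead re-derive the relevant special case from scratch: (i)$\Rightarrow$(ii) via the variational characterization of $\alpha G_\alpha f$ as the unique minimizer of $g \mapsto q(g)+\alpha\|g-f\|_2^2$ together with the pointwise competitor estimate $|v\wedge h - f|\le |v-f|$ for $0\le f\le h$ (this is exactly the mechanism inside Ouhabaz's theorem); (ii)$\Rightarrow$(iii) via Yosida approximants $e^{-t\alpha}\sum_k \frac{(t\alpha^2)^k}{k!}G_\alpha^k$ and the iterated bound $G_\alpha^k h \le \alpha^{-k}h$; (iii)$\Rightarrow$(i) via the increasing approximating forms $q^{(t)}(f)=\frac1t\langle (I-T_t)f,f\rangle$ and the decomposition $f = (f\wedge h) + (f-h)_+$, where the sign of the cross term is controlled precisely because $f\wedge h = h$ on the support of $(f-h)_+$; and the supplementary equivalence for $h \in D(q)$ by a direct first-variation argument in one direction and the lattice inequality of Lemma~\ref{lemma:form domain lattice} in the other. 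All the individual steps check out, including the reduction of statements (ii) and (iii) to $L^2$-data by monotone approximation $f_n \nearrow h$, which is the point the paper delegates to \cite{Kaj}. What the paper's approach buys is brevity and reliance on a standard, citable general theorem; what yours buys is a self-contained argument that never leaves the setting of the paper and makes the treatment of non-$L^2$ excessive functions explicit rather than outsourced.
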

\begin{proof}
This follows directly from Ouhabaz' invariance criterion for closed convex sets under the resolvent respectively semigroup, see \cite{Ouh}. See also the proof of \cite[Proposition~4]{Kaj} for more details.
\end{proof}
\begin{remark}
There are two immediate consequences of this lemma, which we will use below:
\begin{enumerate}[(a)]
 \item Assertion (ii)  yields that if $ h \in L^0_+(X,\mu)$ is excessive and strictly positive, then $\alpha G_\alpha$ and $T_t$ extend to  contractions on $L^\infty_h(X,\mu)$. By duality we obtain that $\alpha G_\alpha$ and $T_t$ extend to a contraction on $L^1(X,h\mu)$.
 \item The characterization of excessive functions in the form domain shows that for nonnegative $f \in L^2(X,\mu)$ and $\beta \geq \alpha$, the resolvent $G_\alpha f$ is $q_\beta$-excessive.
\end{enumerate}
\end{remark}

If $q$ satisfies the first Beurling-Deny criterion,  for $\alpha \leq \beta$ the resolvent identity $G_\alpha   = G_\beta + (\beta - \alpha)G_\alpha G_\beta$ extends from $L^2_+(X,\mu)$ to the extended operators on $L^+(X,\mu)$. This implies that for $f \in L^+(X,\mu)$ the map $ (0,\infty) \to L^+(X,\mu),\, \alpha \mapsto G_\alpha f$ is monotone decreasing.  Hence, for each nonnegative $f \in L^+(X,\mu)$ the limit
$$Gf =  \lim_{\alpha \to 0+} G_\alpha f$$
exists in $L^+(X,\mu)$. The map $G \colon L^+(X,\mu) \to L^+(X,\mu)$ is called the {\em Green operator} of $q$.

\begin{lemma}\label{lemma:existence of resolvents}
Let  $q \colon L^2(X,\mu) \to [0,\infty]$ be a quadratic form satisfying the first Beurling-Deny criterion. 
\begin{enumerate}[(a)]
 \item Let $g \in L^+(X,\mu)$ and suppose that $Gg \in L^0(X,\mu)$.  Then $Gg$ is $q$-excessive.
 \item Suppose  $g \in L_+^2(X,\mu)$ is strictly positive such that for all $f \in D(q)$
 $$\int_X  |f| gd\mu \leq q(f)^{1/2}.$$
 Then $Gg$ is strictly positive and $Gg \in L_+^1(X,g\mu)$. 
\end{enumerate}
\end{lemma}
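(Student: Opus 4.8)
For part (a), the plan is to invoke the characterization of excessive functions in Lemma~\ref{lemma:characterization of excessive functions}: since $Gg = \lim_{\alpha \to 0+} G_\alpha g \geq 0$ and $Gg \in L^0(X,\mu)$ by assumption, we have $Gg \in L^0_+(X,\mu)$, and it suffices to show $\beta G_\beta(Gg) \leq Gg$ for every $\beta > 0$. The tool is the resolvent identity, which the text records as extending to the operators on $L^+(X,\mu)$. For $0 < \alpha < \beta$ it gives, using that resolvents commute, $G_\alpha g - G_\beta g = (\beta-\alpha)G_\beta G_\alpha g$, so that
$$G_\beta G_\alpha g = \frac{1}{\beta - \alpha}\left(G_\alpha g - G_\beta g\right).$$
Since $G_\alpha g \nearrow Gg$ as $\alpha \to 0+$ and the $L^+$-extension of the positivity preserving operator $G_\beta$ is continuous along increasing sequences (which follows from its construction as a monotone limit by a diagonal argument), letting $\alpha \to 0+$ yields $G_\beta(Gg) = \beta^{-1}(Gg - G_\beta g)$, whence $\beta G_\beta(Gg) = Gg - G_\beta g \leq Gg$. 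Finiteness of $Gg$ ensures the subtraction is harmless, and the characterization lemma concludes that $Gg$ is $q$-excessive.

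For part (b), I would first extract a uniform bound on $\int_X G_\alpha g \cdot g\,d\mu$. Testing the resolvent equation against $f = G_\alpha g \in D(q)$ gives
$$\int_X G_\alpha g \cdot g\,d\mu = q(G_\alpha g) + \alpha\av{G_\alpha g}_2^2 \geq q(G_\alpha g),$$
while the hypothesis applied to $f = G_\alpha g \geq 0$ gives $\int_X G_\alpha g \cdot g\,d\mu \leq q(G_\alpha g)^{1/2}$. Writing $B = q(G_\alpha g)$, these combine to $B \leq \int_X G_\alpha g\cdot g\,d\mu \leq B^{1/2}$, which forces $B \leq 1$ and hence $\int_X G_\alpha g\cdot g\,d\mu \leq 1$ uniformly in $\alpha$. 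As $G_\alpha g \nearrow Gg$, the monotone convergence theorem then yields $\int_X Gg\cdot g\,d\mu \leq 1$. Because $g > 0$ a.e., this bound already forces $Gg < \infty$ a.e., so $Gg \in L^0(X,\mu)$ and $Gg \in L^1_+(X,g\mu)$.

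It remains to prove strict positivity, which I expect to be the main obstacle, since the lemma assumes neither irreducibility nor any positivity of a resolvent kernel. Here I would pass to the Laplace representation $G_\alpha g = \int_0^\infty e^{-\alpha t} T_t g\,dt$, a Bochner integral in $L^2(X,\mu)$ valid because $G_\alpha = (L+\alpha)^{-1}$ and $T_t = e^{-tL}$. Suppose $Gg$ vanishes on a set of positive measure; by $\sigma$-finiteness choose $B$ contained in $\{Gg = 0\}$ with $0 < \mu(B) < \infty$. Then $0 \leq G_\alpha g \leq Gg = 0$ on $B$, so pairing with $\1_B \in L^2(X,\mu)$ and pulling the functional inside the Bochner integral,
$$0 = \int_X G_\alpha g\cdot \1_B\,d\mu = \int_0^\infty e^{-\alpha t}\left(\int_B T_t g\,d\mu\right)dt.$$
Each inner integral is nonnegative and $t \mapsto \int_B T_t g\,d\mu$ is continuous on $(0,\infty)$, so it must vanish for every $t > 0$; letting $t \to 0+$ and using strong continuity of the $C_0$-semigroup ($T_t g \to g$ in $L^2$) gives $\int_B g\,d\mu = 0$, contradicting $g > 0$ a.e.\ and $\mu(B) > 0$. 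Hence $Gg > 0$ a.e.

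In summary, the integrability statement is a short computation once the resolvent equation is tested against $G_\alpha g$ and married to the Hardy-type hypothesis, whereas the genuinely delicate point is the strict positivity, handled through the semigroup. Two subtleties I would be careful to record are the monotone continuity of the $L^+$-extension of $G_\beta$ used in part (a), and the fact that the scalar map $t \mapsto \int_B T_t g\,d\mu$ inherits both its continuity on $(0,\infty)$ and its correct limit at $0$ from the strong continuity of $(T_t)$.
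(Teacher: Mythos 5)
Your proof is correct, and it diverges from the paper's argument at two of the three key points, so a comparison is worth recording. For (a), the paper never verifies the resolvent bound $\beta G_\beta(Gg) \le Gg$ directly: it uses the remark after Lemma~\ref{lemma:characterization of excessive functions} (for $f \in L^2_+(X,\mu)$ and $\beta \le \alpha$ the function $G_\beta f$ is $q_\alpha$-excessive), writes $Gg$ as an $L^0(X,\mu)$-limit of such functions, invokes Lemma~\ref{lemma:limit of excessive functions is excessive} to conclude that $Gg$ is $q_\alpha$-excessive for every $\alpha > 0$, and then lets $\alpha \to 0+$ in $q_\alpha(f \wedge Gg) \le q_\alpha(f)$. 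You instead check condition (ii) of Lemma~\ref{lemma:characterization of excessive functions} by passing to the limit in the extended resolvent identity; this is sound, and the two points you flag are exactly the ones needing care: the monotone continuity of the $L^+$-extension of $G_\beta$ (which follows from monotonicity of the extension together with the Fatou property recorded in Subsection~\ref{subsection:extension of positivity preserving operators}) and the a.e.-finiteness $G_\beta g \le G_\alpha g \le Gg < \infty$ that legitimizes the subtraction. The paper's route recycles two lemmas already in place and needs no new facts about the extended operators; yours is a more hands-on computation that bypasses the $q_\alpha$-excessive detour entirely. For (b), your uniform bound $\int_X g\, G_\alpha g \, d\mu \le 1$ is the paper's computation in slightly different packaging (the paper squares the hypothesis and cancels, you compare $B \le B^{1/2}$; same content), and the monotone convergence step is identical. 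The genuine divergence is strict positivity: you treat it as the hard part and resolve it via the Laplace representation $G_\alpha g = \int_0^\infty e^{-\alpha t} T_t g\, dt$, positivity and continuity of $t \mapsto \as{T_t g, 1_B}$, and strong continuity at $t = 0+$ --- all of which is correct --- whereas the paper dispatches it in one line: $\alpha G_\alpha g \to g$ in $L^2(X,\mu)$ as $\alpha \to \infty$, so along a subsequence $G_\alpha g > 0$ a.e.\ for large $\alpha$, and since $\alpha \mapsto G_\alpha g$ is decreasing, $Gg \ge G_\alpha g > 0$ a.e. Your semigroup argument thus replaces an elementary observation about the resolvent by heavier (but standard) machinery; its only advantage is that it makes explicit precisely which continuity properties of $(T_t)$ are being used.
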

\begin{proof}
 (a): According to the previous lemma, for $f \in L^2_+(X,\mu)$ and $\beta \leq \alpha$ the resolvent   $G_\beta f$ is $q_\alpha$-excessive. Now $Gg$ is the limit in $L^0(X,\mu)$ of functions of the form $G_\beta f$ with $\beta \leq \alpha$ and $f \leq g$. Hence, $Gg$ is also $q_\alpha$-excessive by Lemma~\ref{lemma:limit of excessive functions is excessive}. We obtain 
 $$q(f \wedge Gg) + \alpha \av{f \wedge Gg}_2^2 = q_\alpha(f \wedge Gg) \leq q_\alpha(f) =  q(f) + \alpha \av{f}_2^2.$$
 Letting $\alpha \to 0+$ yields the claim. 

%
%
 (b): Strict positivity follows from   $\alpha G_\alpha g \to g$, as $\alpha \to \infty$, and the fact that $\alpha \to G_\alpha g$ is decreasing.  Using the definition of $G_\alpha$ we obtain 
 $$\left(\int_X g G_\alpha g d\mu \right)^2 \leq q(G_\alpha g) = \as{g,G_\alpha g} - \alpha \av{G_\alpha g}_2^2. $$
 This implies $\int_X g G_\alpha g d\mu \leq 1$ and we obtain the statement from the monotone convergence theorem.  
\end{proof}

\subsection{The extensions $q_e$ and $q^+$}

Every quadratic form $q$ on $L^2(X,\mu)$ can be interpreted to be a quadratic form on $L^0(X,\mu)$ by letting $q (f) = \infty$ for $f \in L^0(X,\mu) \setminus L^2(X,\mu)$. The following lemma shows that forms on $L^2(X,\mu)$ satisfying the first Beurling-Deny criterion are closable when viewed in this sense as forms on $L^0(X,\mu)$.

\begin{proposition}[Existence of the extended form]\label{prop:existence of extended form}
 Let $q$ be a  closed quadratic form on $L^2(X,\mu)$ satisfying the first Beurling-Deny criterion. Then $q$ is closable on $L^0(X,\mu)$ and its closure $q_{e}$ is a quadratic form satisfying the first Beurling-Deny criterion.  Moreover, $D(q_e)  \cap L^2(X,\mu) = D(q)$ and a function $h \in L_+^0(X,\mu)$ is $q$-excessive if and only if it is $q_e$-excessive.  
\end{proposition}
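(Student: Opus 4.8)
The plan is to prove the four assertions in turn: closability of $q$ on $L^0(X,\mu)$, the first Beurling--Deny criterion for the closure $q_e$, the identity $D(q_e)\cap L^2(X,\mu)=D(q)$, and the equivalence of $q$- and $q_e$-excessivity. The engine for all of this is the fact recorded in the proof of Lemma~\ref{lemma:ancona}: if $(f_n)\subseteq D(q)$ is $q$-bounded and $f_n\to f$ in $L^2(X,\mu)$, then $f\in D(q)$, $f_n\to f$ weakly with respect to $q$, and hence $q(f)\le\liminf_n q(f_n)$. I will combine it with dominated convergence and the submodularity inequality of Lemma~\ref{lemma:form domain lattice}.

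I expect closability to be the main obstacle, since $L^0(X,\mu)$ is not locally convex and $q$ is in general \emph{not} lower semicontinuous on all of $L^0(X,\mu)$. The first step is a non-sharp lower semicontinuity statement: if $(g_m)\subseteq D(q)$ satisfies $q(g_m)\le B$, $g_m\to g$ in $L^0(X,\mu)$ and $g\in L^2(X,\mu)$, then $g\in D(q)$ and $q(g)\le CB$ for a universal constant $C$. For nonnegative $g_m,g$ this follows from a double truncation: for fixed $m$ the sequence $(g_m\wedge g_k)_k$ converges in $L^2(X,\mu)$ to $g_m\wedge g$ (domination by $g_m$) and has energy $\le 2B$ by submodularity, so the $L^2$-fact gives $g_m\wedge g\in D(q)$ with $q(g_m\wedge g)\le 2B$; a second application to $(g_m\wedge g)_m$, which converges in $L^2(X,\mu)$ to $g$ (domination by $g$), gives $g\in D(q)$ with $q(g)\le 2B$. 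The signed case reduces to this by writing $g=g_+-g_-$: submodularity with the constant $0$ gives $q((g_m)_\pm)\le q(g_m)\le B$, and since taking positive and negative parts is continuous in $L^0(X,\mu)$ the nonnegative case yields $g_\pm\in D(q)$ and therefore $g\in D(q)$ with $q(g)\le CB$. Now I verify the standard closability criterion behind Lemma~\ref{lemma:closability}: let $h_n\to 0$ in $L^0(X,\mu)$ with $(h_n)$ being $q$-Cauchy, so that $L:=\lim_n q(h_n)$ exists. Given $\varepsilon>0$ pick $N$ with $q(h_n-h_m)<\varepsilon$ for $n,m\ge N$; fixing $n\ge N$ and letting $m\to\infty$, the sequence $(h_n-h_m)_m$ is $q$-bounded by $\varepsilon$ and converges in $L^0(X,\mu)$ to $h_n\in D(q)\subseteq L^2(X,\mu)$, so the non-sharp statement yields $q(h_n)\le C\varepsilon$. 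Letting $n\to\infty$ and then $\varepsilon\to 0$ gives $L=0$, which is closability; Lemma~\ref{lemma:closability} then provides $q_e$.

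For the first Beurling--Deny criterion of $q_e$ I take $f\in D(q_e)$ and a $q$-Cauchy sequence $(f_n)\subseteq D(q)$ with $f_n\to f$ in $L^0(X,\mu)$ and $q(f_n)\to q_e(f)$. Then $|f_n|\to|f|$ in $L^0(X,\mu)$, $|f_n|\in D(q)$ with $q_e(|f_n|)=q(|f_n|)\le q(f_n)$, and lower semicontinuity of the closed form $q_e$ yields $q_e(|f|)\le\liminf_n q(f_n)=q_e(f)$. In particular $D(q_e)$ is a vector lattice by Lemma~\ref{lemma:form domain lattice}.

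The inclusion $D(q)\subseteq D(q_e)\cap L^2(X,\mu)$ is clear. For the converse, let $f\in D(q_e)\cap L^2(X,\mu)$; since $D(q_e)$ is a lattice it suffices to treat $f\ge 0$. Choosing a $q$-Cauchy sequence $(f_n)\subseteq D(q)$ with $f_n\to f$ in $L^0(X,\mu)$ and replacing $f_n$ by $|f_n|$, I obtain a nonnegative $q$-bounded sequence converging in $L^0(X,\mu)$ to $f\in L^2(X,\mu)$, so the nonnegative non-sharp lower semicontinuity above gives $f\in D(q)$. The excessivity equivalence then follows. If $h\in L^0_+(X,\mu)$ is $q_e$-excessive and $f\in D(q)$, then $f\wedge h\in L^2(X,\mu)$ and $q_e(f\wedge h)\le q_e(f)=q(f)<\infty$, so $f\wedge h\in D(q_e)\cap L^2(X,\mu)=D(q)$ and $q(f\wedge h)=q_e(f\wedge h)\le q(f)$; hence $h$ is $q$-excessive. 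Conversely, if $h$ is $q$-excessive and $f\in D(q_e)$, approximating $f$ by a $q$-Cauchy sequence $(f_n)\subseteq D(q)$ gives $f_n\wedge h\in D(q)$ with $q(f_n\wedge h)\le q(f_n)$ and $f_n\wedge h\to f\wedge h$ in $L^0(X,\mu)$, so lower semicontinuity of $q_e$ yields $q_e(f\wedge h)\le\liminf_n q(f_n)=q_e(f)$; hence $h$ is $q_e$-excessive.
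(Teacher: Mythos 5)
Your proof is correct, and its core -- the closability of $q$ on $L^0(X,\mu)$ -- takes a genuinely different route from the paper's. The paper establishes the \emph{sharp} lower semicontinuity of $q$ on its domain (condition (ii) of Lemma~\ref{lemma:closability}) by a two-case argument: if a strictly positive $q$-excessive function $h$ exists, it tests against resolvents $G_\alpha\varphi$ with $\varphi\in L^1(X,h\mu)\cap L^2(X,\mu)$ to obtain $q$-weak convergence for sequences dominated by $Ch$, and then truncates at $\pm Ch$; if no such $h$ exists, it perturbs $q$ by $\varepsilon\int_X f^2 g\,d\mu$, manufactures an excessive function for the perturbed form via its Green operator (Lemma~\ref{lemma:existence of resolvents}), applies the first case, and lets $\varepsilon\to 0+$. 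You bypass excessive functions and resolvents altogether: the domination/double-truncation trick combined with the standard $p=2$ weak-compactness fact yields only the non-sharp conclusion $q(g)\le CB$, but that is all one needs to verify the classical Cauchy-sequence criterion ($(h_n)$ $q$-Cauchy and $h_n\to 0$ in $L^0(X,\mu)$ imply $q(h_n)\to 0$), and the sharp lower semicontinuity of $q$ on its domain is recovered for free afterwards, since $q_e$ is closed and hence lower semicontinuous on all of $L^0(X,\mu)$. Your argument is thus more elementary and self-contained (only Lemma~\ref{lemma:form domain lattice}, dominated convergence and Hilbert-space weak compactness), while the paper's argument is heavier but exercises the excessive-function machinery on which the rest of the paper depends. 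The remaining three assertions you prove essentially as the paper does; for instance, for $D(q_e)\cap L^2(X,\mu)=D(q)$ the paper's double truncation $(f_m\wedge|f_n|)\vee(-|f_n|)$ with constant $3$ and your reduction to positive parts are interchangeable non-sharp arguments.

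One point to tighten: what you actually verify is the Cauchy-sequence criterion, whereas Lemma~\ref{lemma:closability} characterizes closability by lower semicontinuity on the domain, a formally different condition, and your non-sharp statement does not imply the latter directly. The gap is harmless -- the Cauchy criterion is precisely what makes the formula for $\bar q$ in Lemma~\ref{lemma:closability} well-defined, and the ensuing completion argument uses completeness and metrizability of $L^0(X,\mu)$ but no local convexity -- but you should either say this explicitly or record the equivalence of the two criteria, rather than citing the lemma as if it applied verbatim to the condition you checked.
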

\begin{proof}
 According to Lemma~\ref{lemma:closability} we need to show that $q$ is lower semicontinuous with respect to $L^0(X,\mu)$-convergence on its domain. Thus, let $(f_n)$ and $f \in D(q)$ such that $f_n \to f$ in $L^0(X,\mu)$.  Without loss of generality we can assume 
 $\liminf_{n \to \infty}q(f_n)  < \infty$
 and $f_n \to f$ $\mu$-a.e. (else choose a subsequence). 
 
 {\em Case 1:} There exists a $q$-excessive function $h > 0$.  
 
 First we assume there exists $C > 0$ such that $|f_n| \leq C h$ for all $n \in \N$.  For $\varphi \in L^1(X,h\mu) \cap L^2(X,\mu)$ and $\alpha > 0$ we obtain 
 $$q(f,G_\alpha \varphi) = \as{f, \varphi - \alpha G_\alpha \varphi} = \lim_{n \to \infty} \as{f_n, \varphi - \alpha G_\alpha \varphi} = q(f_n,G_\alpha \varphi).$$
 For the second equality we used Lebesgue's dominated convergence theorem, which can be applied since $\alpha G_\alpha \varphi \in L^1(X,h\mu)$ by $h$ being excessive. Elements of the form $G_\alpha \varphi$ as above are dense in $D(q)$ with respect to $q$. Since $(f_n)$ is  $q$-bounded, this implies $f_n \to f$ weakly with respect to $q$. Using the Cauchy-Schwarz inequality,  weak convergence yields 
 $$q(f) \leq \liminf_{n\to \infty} q(f_n).$$
 This shows lower semicontinuity for sequences, which are uniformly bounded by an excessive function.
 
 With this at hand we can treat the general case. Since $h$ is excessive, for any $C > 0$ the function  $Ch$ is excessive, and since $h > 0$, we have $f = \lim_{C \to \infty} (f \wedge (Ch)) \vee (-Ch)$ in $L^2(X,\mu)$. The $L^2$-lower semicontinuity of $q$, $Ch$ being excessive and the lower semicontinuity for sequences which are uniformly bounded by an excessive function yield 
 \begin{align*}
 q(f) &\leq \liminf_{C \to \infty} q((f \wedge (Ch)) \vee (-Ch))\\
 &\leq  \liminf_{C \to \infty}\liminf_{n \to \infty} q((f_n \wedge (Ch)) \vee (-Ch)) \\
 &\leq \liminf_{n \to \infty} q(f_n).
 \end{align*}
This is the claim we wanted to prove.
 
 {\em Case 2:} $q$ does not have a strictly positive excessive function.  By pointwise convergence the pointwise supremum $F = \sup_{n} |f_n|$ exists and is finite $\mu$-a.e. We let $g = \varphi/(F \vee 1)$ for some $0< \varphi \in L^1(X,\mu) \cap L^\infty(X,\mu)$. Then by construction $(f_n)$ is bounded in $L^2(X,g\mu)$.
 
 Next, for $\varepsilon > 0$ we consider the quadratic form $\tilde q \colon L^2(X,\mu) \to [0,\infty]$ defined by
 $$\tilde q (f) = q(f) + \varepsilon \int_X f^2 g d\mu. $$
 By Lemma~\ref{lemma:existence of resolvents} the Green operator of the form $\tilde q$ applied to $g$ exists and is strictly positive. Therefore, $\tilde q$ has an  excessive function $h > 0$. Moreover,  $\liminf_{n \to \infty}\tilde q(f_n) < \infty$ by our choice of $g$. Hence, Case~1 shows 
 $$\tilde q(f) \leq \liminf_{n\to \infty}\tilde q(f_n).$$
 Since $(f_n)$ is bounded in $L^2(X,g\mu)$, we can take the limit $\varepsilon \to 0+$ and obtain the desired lower semicontinuity.
 
 $q_e$ satisfies the first Beurling Deny criterion: For $f \in D(q_e)$ we can choose $f_n \in D(q)$ with $f_n \to f$ in $L^0(X,\mu)$ and $q(f_n) \to q_e(f)$. The lower semicontinuity of $q_e$ and $q_e = q$ on $D(q)$ yield
 $$q_e(|f|) \leq \liminf_{n \to \infty}q_e(|f_n|)= \liminf_{n \to \infty}q(|f_n|) \leq  \lim_{n \to \infty}q(f_n) = q_e(f). $$

 It remains to prove $D(q_e) \cap L^2(X,\mu) = D(q)$ and the statement on excessive functions. The inclusion   $D(q) \subseteq D(q_e) \cap L^2(X,\mu)$ is trivial. Let $f \in D(q_e) \cap L^2(X,\mu)$ and let $(f_n)$ be a $q$-Cauchy sequence with  $f_n \to f$ in $L^0(X,\mu)$. Without loss of generality we can assume $f_n \to f$ $\mu$-a.e. (else choose a subsequence).   Lebesgue's dominated convergence theorem implies 
 $$f = \lim_{n \to \infty} (f \wedge |f_n|) \vee (-|f_n|) = \lim_{n \to \infty} \lim_{m \to \infty} (f_m \wedge |f_n|) \vee (-|f_n|)$$ 
 in $L^2(X,\mu)$. The $L^2$-lower semicontinuity of $q$ and the subadditivity of $q$ with respect to taking suprema yield
 $$q(f) \leq \liminf_{n \to \infty}\liminf_{m \to \infty} q((f_m \wedge |f_n|) \vee (-|f_n|)) \leq 3 \lim_{n\to \infty}q(f_n) < \infty.$$
 This shows $f \in D(q)$.
 
 Since $q = q_e$ on $L^2(X,\mu)$ (here we use $D(q_e) \cap L^2(X,\mu) = D(q)$), nonnegative $q_e$-excessive functions are  $q$-excessive. Now suppose $h \in L^0_+(X,\mu)$ is $q$-excessive. For $f \in D(q_e)$ we choose $(f_n)$ in $D(q)$ with $f_n \to f$ in $L^0(X,\mu)$ and $q(f_n) \to q_e(f)$. The lower semicontinuity of $q_e$ and $q = q_e$ on $D(q)$ yield
 $$q_e(f \wedge h) \leq \liminf_{n \to \infty} q_e(f_n \wedge h)  = \liminf_{n \to \infty} q(f_n \wedge h) \leq \liminf_{n \to \infty} q(f_n) = q_e(f).$$
 This shows that $h$ is also $q_e$-excessive.
 \end{proof}

 \begin{definition}[Extended form]
 The closed quadratic form  $q_e$ on $L^0(X,\mu)$ introduced in the previous proposition is called the {\em extended form of $q$}. 
 \item Note that it follows directly from $D(q) = D(q_e) \cap L^2(X,\mu)$ and the lower semicontinuity of $q_e$ that $q$ and $q_e$ have the same excessive functions.
 \end{definition}

\begin{remark}
\begin{enumerate}[(a)]
 \item Using the formula for $q_e$ from Lemma~\ref{lemma:closability} and the characterization of convergence in $L^0(X,\mu)$ in terms of $\mu$-a.e. convergent subsequences, it is easy to see that if $\cE$ is a Dirichlet form, then $D(\cE_e)$ is the extended Dirichlet space of $\cE$ and $\cE_e$ is the extension of $\cE$ to the extended Dirichlet space, see e.g. \cite[Definition~1.1.4]{CF} for the definition of the extended Dirichlet space. 
 \item The first assertion of this proposition (namely closability of $q$ on $L^0(X,\mu)$) is  known.  For Dirichlet forms it is equivalent to the fact that the extension of the Dirichlet form to the extended Dirichlet space is well-defined, see \cite[Theorem~1.5.2]{FOT},  which has its origin in \cite{Sil}. In the generality we use here, the lower semicontinuity of $q$ on its domain with respect to a.e.-convergence is the main result of \cite{Schmu2}. As mentioned in the introduction, the proof given in \cite{Schmu2} uses parts of the criticality theory we develop below. Therefore, we gave an independent proof.  The idea of first proving lower semicontinuity for forms with excessive functions and then extending it to the general case is taken from \cite{Schmu2} but our proof of Case~1, which is based on properties of excessive functions, is much shorter than the one in \cite{Schmu2}.  For Dirichlet forms a version of the argument is contained in the proof of \cite[Theorem~1.59]{Schmi3}. 
  
 \item To the best of our knowledge the literature only contains that the extension of $q$ to the extended space $D(q_e)$ is lower semicontinuous on its domain, i.e., $f_n \to f$ in $L^0(X,\mu)$ and $f \in D(q_e)$ implies $q_e(f) \leq \liminf_{n \to \infty}q_e(f_n)$, see e.g. \cite[Corollary~1.1.9]{CF}. The statement of our proposition is stronger. Since our approach automatically gives closedness of $q_e$, we have lower semicontinuity of $q_e$ on the whole space $L^0(X,\mu)$. This is not a mere technicality, but crucial for our considerations below.
 \end{enumerate}
 \end{remark}

\begin{lemma}\label{lemma:extension of resolvent to extended space}
 Let $q$ be a closed quadratic form on $L^2(X,\mu)$ satisfying the first Beurling-Deny criterion. For $\alpha > 0$ the operator $\alpha G_\alpha \colon D(q) \to D(q)$ extends by continuity with respect to $q$  to a contraction  $\alpha G_\alpha \colon D(q_e) \to D(q_e)$ with respect to $q_e$. This extension is compatible with the extension of $\alpha G_\alpha$ to $L^0(X,\mu)$, which was discussed in Subsection~\ref{subsection:extension of positivity preserving operators}. Moreover, for any $f \in D(q_e)$ we have $f - \alpha G_\alpha f \in L^2(X,\mu)$ and
 $$\lim_{\alpha \to 0+}q_e(\alpha G_\alpha f) = 0.$$
\end{lemma}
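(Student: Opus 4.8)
The plan is to work throughout with the generator $L$ of $q$, so that $q(g) = \av{L^{1/2}g}_2^2$ and $\alpha G_\alpha = \alpha(L+\alpha)^{-1}$, and to build the extension by continuity before matching it against the $L^0$-extension of Subsection~\ref{subsection:extension of positivity preserving operators}. First I would record two facts on $D(q)$ coming from the functional calculus of $L$. For $g \in D(q)$ we have $q(\alpha G_\alpha g) = \av{\alpha(L+\alpha)^{-1}L^{1/2}g}_2^2 \le \av{L^{1/2}g}_2^2 = q(g)$, since $\alpha/(\lambda+\alpha) \le 1$; thus $\alpha G_\alpha$ is a $q$-contraction on $D(q)$. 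Writing $g - \alpha G_\alpha g = L(L+\alpha)^{-1}g$ and using $\lambda^2/(\lambda+\alpha)^2 = \lambda\cdot\lambda/(\lambda+\alpha)^2 \le \lambda/(4\alpha)$, I obtain the key estimate
\[ \av{g - \alpha G_\alpha g}_2 \le \tfrac{1}{2\sqrt{\alpha}}\, q(g)^{1/2}, \qquad g \in D(q), \]
which makes $g \mapsto g - \alpha G_\alpha g$ continuous from $(D(q),q)$ into $L^2(X,\mu)$.

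Next I construct the extension. Given $f \in D(q_e)$, I choose a $q$-Cauchy sequence $(f_n)$ in $D(q)$ with $f_n \to f$ in $L^0(X,\mu)$ and $q_e(f_n - f) \to 0$. The contraction property makes $(\alpha G_\alpha f_n)$ $q$-Cauchy, and the key estimate makes $(f_n - \alpha G_\alpha f_n)$ Cauchy, hence convergent, in $L^2(X,\mu)$, say to $h$. Since $L^0(X,\mu)$ is a topological vector space, $\alpha G_\alpha f_n = f_n - (f_n - \alpha G_\alpha f_n) \to f - h$ in $L^0(X,\mu)$. Being simultaneously $q$-Cauchy and $L^0$-convergent to $f - h$, the sequence $(\alpha G_\alpha f_n)$ converges to $f-h$ in the form topology of $q_e$, so $f - h \in D(q_e)$, and I set $\alpha G_\alpha f := f - h$. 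The key estimate applied to differences of approximating sequences shows $h$ is independent of the choice of $(f_n)$, so this extension is well-defined and linear, with $f - \alpha G_\alpha f = h \in L^2(X,\mu)$. Passing to the limit in $q(\alpha G_\alpha f_n) \le q(f_n)$ gives $q_e(\alpha G_\alpha f) \le q_e(f)$, the desired $q_e$-contraction.

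The compatibility with the $L^0$-extension is the step I expect to be the main obstacle, since it forces me to match the abstract form-continuity extension with the concrete order-theoretic one. By the lattice property of $D(q_e)$ (Proposition~\ref{prop:existence of extended form} with Lemma~\ref{lemma:form domain lattice}) and linearity of both extensions it suffices to treat $f \ge 0$. Starting from nonnegative $q$-approximants (available by Ancona's Lemma~\ref{lemma:ancona}) and replacing each by its infimum with $f$, which lies in $D(q_e) \cap L^2(X,\mu) = D(q)$ and still converges in form topology, I may take $f_n \in D(q)$ with $0 \le f_n \le f$, $f_n \to f$ in form topology; passing to a subsequence I arrange $f_n \to f$ and $\alpha G_\alpha f_n \to f-h$ both $\mu$-a.e. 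Fatou's lemma for the positivity preserving operator $\alpha G_\alpha$ then gives $\alpha G_\alpha f \le \liminf_n \alpha G_\alpha f_n = f-h$ (the $L^0$-extension on the left), while monotonicity of $\alpha G_\alpha$ on $L^+(X,\mu)$ gives $\alpha G_\alpha f_n \le \alpha G_\alpha f$, hence $f-h \le \alpha G_\alpha f$. The two inequalities force $\alpha G_\alpha f = f - h$; in particular $\alpha G_\alpha|f| \in L^0(X,\mu)$, so $f$ lies in the domain of the $L^0$-extension and the two notions of $\alpha G_\alpha f$ coincide. The delicate point here is keeping the monotone sandwich $0 \le f_n \le f$ inside $D(q)$ while retaining form convergence, which is exactly what $D(q_e)\cap L^2 = D(q)$ and the form-continuity of the lattice operations provide.

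Finally, for the limit as $\alpha \to 0+$ I first treat $g \in D(q)$: dominated convergence in the spectral representation $q(\alpha G_\alpha g) = \int_{[0,\infty)} \left( \frac{\alpha}{\lambda+\alpha} \right)^2 \lambda \, d\as{E_\lambda g, g}$ yields $q(\alpha G_\alpha g) \to 0$, the integrand tending to $0$ pointwise for $\lambda > 0$ and being dominated by the integrable $\lambda$. For general $f \in D(q_e)$ and $\varepsilon > 0$, I pick $g \in D(q)$ with $q_e(f - g) < \varepsilon$; the triangle inequality for the seminorm $q_e^{1/2}$ together with the $q_e$-contraction property gives $q_e(\alpha G_\alpha f)^{1/2} \le q_e(\alpha G_\alpha(f-g))^{1/2} + q_e(\alpha G_\alpha g)^{1/2} \le \varepsilon^{1/2} + q(\alpha G_\alpha g)^{1/2}$, so $\limsup_{\alpha \to 0+} q_e(\alpha G_\alpha f)^{1/2} \le \varepsilon^{1/2}$. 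Letting $\varepsilon \to 0$ completes the argument.
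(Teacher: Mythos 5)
Your proposal is correct and follows essentially the same route as the paper: the same spectral-calculus inequalities ($q$-contractivity and the $\sqrt{\alpha}$-estimate on $g - \alpha G_\alpha g$), the same extension-by-continuity construction via $q$-Cauchy approximants converging in $L^0(X,\mu)$, and the same density-plus-contraction argument for $\lim_{\alpha \to 0+} q_e(\alpha G_\alpha f) = 0$. The only (harmless) variation is in the compatibility step: the paper truncates the approximants as $g_n = (f_n \wedge |f|) \vee (-|f|)$ and invokes Lebesgue's dominated convergence theorem for positivity preserving operators, whereas you reduce to $f \geq 0$ and combine Fatou's lemma with monotonicity of the $L^+$-extension on a sandwich $0 \leq f_n \leq f$ — both hinge on Lemma~\ref{lemma:ancona} and the identity $D(q_e) \cap L^2(X,\mu) = D(q)$ in the same way.
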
 
 \begin{proof}
 By the spectral theorem for the (nonnegative) generator of $q$ we have
 $$q(\alpha G_\alpha f)  = \int_{[0,\infty)} \frac{\lambda \alpha^2}{(\lambda + \alpha)^2} d\sigma_f(\lambda) \leq \int_{[0,\infty)} \lambda d\sigma_f(\lambda) = q(f) $$
 and 
 $$\alpha \av{f - \alpha G_\alpha f}^2 = \int_{[0,\infty)} \frac{\alpha \lambda^2}{(\lambda + \alpha)^2} d\sigma_f (\lambda) \leq \int_{[0,\infty)} \lambda d\sigma_f(\lambda) = q(f),$$
 where $\sigma_f$ is the spectral measure of the generator at $f \in L^2(X,\mu)$. Using Lebesgue's dominated convergence theorem, this shows $\lim_{\alpha \to 0+}q(\alpha G_\alpha f) = 0$ for $f \in D(q)$.
 
Now let $f \in D(q_e)$ and let $(f_n)$ be $q$-Cauchy with $f_n \to f$ in $L^0(X,\mu)$. By the lower semicontinuity of $q_e$ we have $f_n \to f$ in the $q_e$-form topology.  The inequalities above show  that  $(\alpha G_\alpha f_n)$ is $q$-Cauchy and that $(f_n - \alpha G_\alpha f_n)$ is $L^2(X,\mu)$-Cauchy. Hence, there exists $R_\alpha f \in L^0(X,\mu)$ such that  $\alpha G_\alpha f_n \to  R_\alpha f$ in $L^0(X,\mu)$ with $f - R_\alpha f \in L^2(X,\mu)$. This implies $R_\alpha f \in D(q_e)$ 
 and 
 $$q_e(R_\alpha f)\leq \liminf_{n \to \infty}q(\alpha G_\alpha f_n) \leq \liminf_{n \to \infty}q(f_n) = q_e(f).$$
 We proved that $\alpha G_\alpha \colon D(q)\to D(q)$ extends to a contraction $R_\alpha \colon D(q_e) \to D(q_e)$ with the continuity property that $(f_n)$ in $D(q)$ with $f_n \to f$ in the form topology yields $\alpha G_\alpha f_n \to R_\alpha f$ in $L^0(X,\mu)$.

 It remains to show $ R_\alpha f = \alpha G_\alpha f$, where the right side of this equation denotes the extension of $\alpha G_\alpha$ discussed in Subsection~\ref{subsection:extension of positivity preserving operators}. Lemma~\ref{lemma:ancona} yields  $|f_n| \to |f|$ in the $q_e$-form topology. Fatou's lemma for positivity preserving operators  and our definition of $R_\alpha$ show
 $$\alpha G_\alpha |f| \leq \lim_{n \to \infty} \alpha G_\alpha |f_n| = R_\alpha |f| \in D(q_e). $$
 This implies that $\alpha G_\alpha f$  exists. Now consider $g_n = (f_n\wedge |f|) \vee (-|f|)$. Since $D(q_e) \cap L^2(X,\mu) = D(q)$, we have $g_n \in D(q)$. Moreover, by Lemma~\ref{lemma:ancona} $(g_n)$ converges to $f$ in the form topology. Using $|g_n| \leq f$ we can apply Lebesgue's theorem for positivity preserving operators and obtain 
 $$\alpha G_\alpha f = \lim_{n \to \infty} \alpha G_\alpha g_n = R_\alpha f.$$
For the last equality we used our definition of $R_\alpha$. 

The convergence $\lim_{\alpha \to 0+}q_e(\alpha G_\alpha f) = 0$ follows from the fact that $D(q)$ is dense in $D(q_e)$ with respect to the form topology and that $\alpha G_\alpha$ is a contraction.
 \end{proof}
 
 \begin{remark}
   The previous lemma is a resolvent version of \cite[Lemma~2.8]{TU}, which treats the semigroup. We included a short proof because in \cite{TU} the relation of the extension of $G_\alpha$ to $D(q_e)$ by continuity and by positivity are not explained. 
 \end{remark}

There is another lower semicontinuous extension of $q_e$ to $L^+(X,\mu)$, which we discuss next.   Since the topologies of $L^+(X,\mu)$ and $L^0(X,\mu)$ agree on $L^0_+(X,\mu)$, the restriction of $q_e$ to $D(q_e) \cap L^+(X,\mu)$  is lower semicontinuous with respect to $L^+(X,\mu)$-convergence. This implies that $q^+ \colon L^+(X,\mu) \to [0,\infty]$ given by
$$ \quad  q^+(f) = \begin{cases}
                  \lim\limits_{n \to \infty} q_e(f_n)  &\text{if there ex. $q_e$-Cauchy sequence } (f_n) \text{ with } f_n \to f \text{ in }L^+(X,\mu)\\
                  \infty &\text{else}
                 \end{cases}$$
is well-defined. The same arguments used in the  proof showing that $q_e$ is lower semicontinuous also yield that $q^+$ is lower semicontinuous, see the proof of \cite[Lemma~A.3]{Schmi2}. The functional $q^+ \colon L^+(X,\mu) \to [0,\infty]$ is not a quadratic form as $L^+(X,\mu)$ is not even a vector space. However, it is readily verified that it is homogeneous, i.e., $\lambda^2 q^+(f) = q^+(\lambda f)$ for all $\lambda \geq 0$ and $f \in L^+(X,\mu)$. 

The lower semicontinuity of $q^+$ and the corresponding inequality for $q_e$, Lemma~\ref{lemma:form domain lattice},  show
$$q^+(f\wedge g) + q^+(f\vee g) \leq q^+(f) + q^+(g).$$

With this at hand, if we let $D(q^+) = \{f \in L^+(X,\mu) \mid q^+(f) < \infty\}$, the same proof as the one we gave for the identity $D(q_e) \cap L^2(X,\mu) = D(q)$ yields 
$$D(q^+) \cap L_+^0(X,\mu)  = D(q_e) \cap L^0_+(X,\mu).$$ 
Moreover, if $h$ is $q$-excessive (or equivalently $q_e$-excessive), then also 
$$q^+(f \wedge h) \leq q^+(f), \quad f \in L^+(X,\mu).$$

The functional $q^+$ can assign finite values to functions taking the value $\infty$ on a set of positive measure. However, the following lemma shows that this is only the case if $\ker q^+$ is nontrivial. 

\begin{lemma}\label{lemma:kernel q^+}
 Let $q$ be a closed quadratic form on $L^2(X,\mu)$ satisfying the first Beurling-Deny criterion. For  $f \in L^+(X,\mu)$ with $q^+(f) < \infty$ 
 %
 we have $\infty \cdot 1_{\{f = \infty\}} \in \ker q^+$. 
 %
 %
 %
\end{lemma}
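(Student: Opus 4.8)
The plan is to realize the function $\infty \cdot 1_{\{f = \infty\}}$ as a limit in $L^+(X,\mu)$ of positive scalar multiples of $f$, and then to exploit the homogeneity of $q^+$ together with its lower semicontinuity. Write $A = \{f = \infty\}$ and set $g = \infty \cdot 1_A$. The decisive observation is that $\lambda f \to g$ in $L^+(X,\mu)$ as $\lambda \to 0+$. Indeed, on $A$ we have $f = \infty$, so $\lambda f = \infty$ for every $\lambda > 0$, while on $X \setminus A$ the function $f$ is finite $\mu$-a.e. and hence $\lambda f \to 0$ pointwise $\mu$-a.e. Thus $\lambda_n f \to g$ pointwise $\mu$-a.e. for any sequence $\lambda_n \to 0+$, and since $\mu$-a.e. convergence implies convergence in the topology of local convergence in measure (as recalled in the preliminaries), we obtain $\lambda_n f \to g$ in $L^+(X,\mu)$.

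With this limit in hand I would conclude directly. By homogeneity of $q^+$ we have $q^+(\lambda_n f) = \lambda_n^2 \, q^+(f)$, and since $q^+(f) < \infty$ this tends to $0$ as $n \to \infty$. Applying the lower semicontinuity of $q^+$ along the convergent sequence $\lambda_n f \to g$ then yields
$$q^+(g) \leq \liminf_{n \to \infty} q^+(\lambda_n f) = \liminf_{n \to \infty} \lambda_n^2 \, q^+(f) = 0,$$
so that $q^+(\infty \cdot 1_A) = 0$, i.e. $\infty \cdot 1_{\{f = \infty\}} \in \ker q^+$, as claimed.

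I do not expect a genuine obstacle here. The only point requiring care is the identification of the limit, which reduces to the elementary pointwise computation above together with the characterization of convergence in $L^+(X,\mu)$ via $\mu$-a.e.-convergent subsequences. Both the homogeneity and the lower semicontinuity of $q^+$ are recorded before the statement, so no further input on the structure of $q^+$ is needed; in particular, the lattice-type inequality for $q^+$ and the behaviour under excessive functions are not required for this argument.
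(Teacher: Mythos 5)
Your proof is correct and is essentially the paper's own argument: the paper likewise writes $\infty \cdot 1_{\{f=\infty\}} = \lim_{\lambda \to 0+}\lambda f$ and combines lower semicontinuity with the homogeneity $q^+(\lambda f) = \lambda^2 q^+(f)$ to get $q^+(\infty \cdot 1_{\{f=\infty\}}) = 0$. Your write-up merely makes explicit the pointwise identification of the limit in $L^+(X,\mu)$, which the paper leaves implicit.
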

\begin{proof}
 Let $h = \infty \cdot 1_{\{f = \infty\}} = \lim_{\lambda \to 0+}  \lambda f. $ The lower semicontinuity and homogeneity of $q^+$ yield
 \begin{align*}
  q^+(h) \leq \liminf_{\lambda \to 0+} q^+(\lambda f) = \liminf_{\lambda \to 0+} \lambda^2 q^+(f) = 0. 
 \end{align*}
 %
\end{proof}

\begin{lemma}\label{lemma:continuity q^+}
 Let $q$ be a closed quadratic form satisfying the first Beurling-Deny criterion. Let $(f_n)$ be a sequence in $L^+(X,\mu)$ with $\sum_{k = 1}^\infty q^+(f_k) < \infty$. Then
 $$q^+(\liminf_{n \to \infty} f_n) = q^+(\limsup_{n \to \infty} f_n) = 0.$$
In particular, if $\ker q^+ = \{0\}$, then $f_n \to 0$ $\mu$-a.e.
 \end{lemma}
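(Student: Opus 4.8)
The plan is to combine the sublinearity of $q^+$ under lattice operations with a double application of lower semicontinuity along nested monotone limits. The basic tool is the inequality
$$q^+(f \wedge g) + q^+(f \vee g) \leq q^+(f) + q^+(g)$$
established above; since $q^+ \geq 0$ it gives separately $q^+(f \vee g) \leq q^+(f) + q^+(g)$ and $q^+(f \wedge g) \leq q^+(f) + q^+(g)$. A straightforward induction on $m$ then yields, for each fixed $n$,
$$q^+\Bigl(\bigvee_{k=n}^m f_k\Bigr) \leq \sum_{k=n}^m q^+(f_k) \quad\text{and}\quad q^+\Bigl(\bigwedge_{k=n}^m f_k\Bigr) \leq \sum_{k=n}^m q^+(f_k),$$
both bounded by the tail $\sum_{k=n}^\infty q^+(f_k)$, which is finite by assumption.

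Next I would set $T_n = \sup_{k \geq n} f_k$ and $I_n = \inf_{k \geq n} f_k$. Letting $m \to \infty$ with $n$ fixed, the finite joins increase to $T_n$ and the finite meets decrease to $I_n$ pointwise $\mu$-a.e., hence in $L^+(X,\mu)$; lower semicontinuity of $q^+$ therefore upgrades the above to $q^+(T_n) \leq \sum_{k=n}^\infty q^+(f_k)$ and $q^+(I_n) \leq \sum_{k=n}^\infty q^+(f_k)$. Now letting $n \to \infty$, one has $T_n \searrow \limsup_{n \to \infty} f_n$ and $I_n \nearrow \liminf_{n \to \infty} f_n$ in $L^+(X,\mu)$, so a second use of lower semicontinuity gives
$$q^+\bigl(\limsup_{n \to \infty} f_n\bigr) \leq \liminf_{n \to \infty} q^+(T_n) \leq \liminf_{n \to \infty} \sum_{k=n}^\infty q^+(f_k) = 0,$$
and analogously $q^+(\liminf_{n \to \infty} f_n) = 0$, the tails of the convergent series tending to $0$. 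The final clause follows at once: if $\ker q^+ = \{0\}$ then $\limsup_{n \to \infty} f_n = \liminf_{n \to \infty} f_n = 0$, i.e. $f_n \to 0$ $\mu$-a.e.

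I expect no serious obstacle, since the argument is essentially mechanical once the lattice inequality for $q^+$ and its lower semicontinuity are in hand. The one point demanding attention is that $q^+$ is \emph{not} monotone on the cone, so although $\liminf_{n \to \infty} f_n \leq \limsup_{n \to \infty} f_n$ pointwise, the estimate for the $\liminf$ cannot be deduced from that for the $\limsup$; the meet induction together with the increasing limit $I_n \nearrow \liminf_{n \to \infty} f_n$ must be run independently of the join/$\limsup$ side. A minor technical verification is that monotone pointwise $\mu$-a.e. convergence entails convergence in $L^+(X,\mu)$, which is immediate from the characterization of the metric $\rho$ via a.e.-convergent subsequences recalled in the preliminaries.
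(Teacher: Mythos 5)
Your proof is correct and follows essentially the same route as the paper: both arguments bound $q^+$ of the iterated (finite sup $\to$ tail sup/inf $\to$ limsup/liminf) limits by two applications of lower semicontinuity of $q^+$ on $L^+(X,\mu)$, combined with subadditivity under lattice operations coming from the inequality $q^+(f\wedge g)+q^+(f\vee g)\leq q^+(f)+q^+(g)$. The only difference is expository: you spell out the induction for finite joins/meets and treat the $\liminf$ case explicitly, whereas the paper compresses this into one display and remarks that the $\liminf$ statement ``follows along the same lines.''
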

\begin{proof}
We only treat $\limsup f_n$, the  statement on $\liminf f_n$ follows along the same lines. We have
 $$\limsup_{n \to \infty} f_{n} = \lim_{n \to \infty}  \sup_{l \geq n} f_{l} = \lim_{n \to \infty} \lim_{N \to \infty}  \sup_{ N\geq l \geq n} f_{l}  $$
 pointwise $\mu$-a.e. and hence in $L^+(X,\mu)$. Since $q^+$ is lower semicontinuous on $L^+(X,\mu)$, we obtain 
\begin{align*}
 q^+(\limsup_{n \to \infty} f_{n}) \leq \liminf_{n \to \infty} \liminf_{N \to \infty} q^+(\sup_{ N\geq l \geq n} f_{l}) 
 \leq  \liminf_{n \to \infty} \liminf_{N \to \infty} \sum_{l = n}^N q^+ (f_l) = 0.
\end{align*}
For the second inequality we used the subadditivity of $q^+$ with respect to taking suprema. The 'in particular'-statement follows immediately. 
\end{proof}
 \begin{remark}
The previous lemma is a continuity property for the functional $q^+$. If $\ker q^+ = \{0\}$, it yields that $(D(q_e),q_e)$ continuously embeds into $L^0(X,\mu)$. This observation is exploited further in Section~\ref{section:criticality}.
 \end{remark}

 \subsection{Invariant sets and irreducibilty}

Let $q$ be a quadratic form on $L^p(X,\mu)$, $p \in \{0,2\}$. We say that a measurable set $A\subseteq X$ is {\em $q$-invariant} if 
$q(1_A f) \leq q(f)$
for all $f \in L^p(X,\mu)$. Indeed, this definition  of irreducibilty equals the usual one, see e.g. \cite[Lemma~2.32]{Schmi3}. The quadratic form $q$ is called {\em irreducible} or {\em ergodic} if every $q$-invariant set $A$ satisfies $\mu(A) = 0$ or $\mu(X\setminus A) = 0$.  Using lower semicontinuity it is easy to see that for a closed form satisfying the first Beurling-Deny criterion a set $A$ is $q$-invariant if and only if it is $q_e$-invariant. In particular,   $q$ is irreducible if and only if $q_e$ is irreducible.

\begin{proposition}
 For $p \in \{0,2\}$ let $q$ be a closed quadratic form on $L^p(X,\mu)$ satisfying the first Beurling-Deny criterion. Let $h \in L^0(X,\mu)$ be $q$-excessive. 
 \begin{enumerate}[(a)]
  \item $h_- \in \ker q$. In particular, if $\ker q = \{0\}$, then every excessive function is nonnegative.
  \item If $h$ is nonnegative, then $\{h > 0\}$ is $q$-invariant. In particular, if $q$ is irreducible, every nontrivial nonnegative excessive function is strictly positive. 
 \end{enumerate}
\end{proposition}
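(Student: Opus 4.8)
For part (a) the plan is simply to test the excessivity inequality at $f = 0$. Since $0 \wedge h = \min\{0,h\} = -h_-$ and $q$ is a quadratic form, this gives $q(h_-) = q(0 \wedge h) \le q(0) = 0$, whence $h_- \in \ker q$; here $h_- \in L^p(X,\mu)$, which for $p = 2$ is part of the definition of excessivity and for $p=0$ is automatic. The ``in particular'' is then immediate: if $\ker q = \{0\}$ then $h_- = 0$, i.e.\ $h \ge 0$.

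For part (b) I would first upgrade $h$ to a whole scale of excessive functions. By homogeneity of the lattice operation, $f \wedge (th) = t\,((f/t) \wedge h)$, so excessivity of $h$ forces $q(f \wedge th) = t^2 q((f/t)\wedge h) \le t^2 q(f/t) = q(f)$; thus $th$ is $q$-excessive for every $t > 0$. Writing $A = \{h > 0\}$, for nonnegative $g \in L^p(X,\mu)$ the functions $g \wedge th$ increase to $1_A g$ as $t \to \infty$ (on $\{h>0\}$ they converge to $g$, on $\{h = 0\}$ they equal $0$), and this convergence holds in $L^p(X,\mu)$ --- trivially for $p = 0$, and by dominated convergence ($0 \le g \wedge th \le g$) for $p=2$. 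Lower semicontinuity of the closed form $q$ then yields $q(1_A g) \le \liminf_{t\to\infty} q(g \wedge th) \le q(g)$. In particular $1_A g \in D(q)$ whenever $g \in D(q)$, so, using that $D(q)$ is a lattice (Lemma~\ref{lemma:form domain lattice}), $1_A f \in D(q)$ for every $f \in D(q)$.

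The remaining task, and the main obstacle, is to pass from nonnegative test functions to arbitrary $f \in D(q)$ (for $f \notin D(q)$ the inequality is trivial). The device I would use is the elementary sign lemma: if $a,b \in D(q)$ are nonnegative with disjoint supports, then $|a - b| = a + b$, so the first Beurling-Deny criterion gives $q(a+b) = q(|a-b|) \le q(a-b)$, and expanding both sides forces $q(a,b) \le 0$. I would then decompose $f_+ = 1_A f_+ + 1_{X\setminus A} f_+$ and $f_- = 1_A f_- + 1_{X\setminus A}f_-$ into four pairwise disjointly supported nonnegative members of $D(q)$. Expanding $q(f) - q(1_A f)$ by bilinearity, the diagonal contributions $q(f_\pm) - q(1_A f_\pm)$ are nonnegative by the nonnegative case, while the off-diagonal difference $q(f_+,f_-) - q(1_A f_+, 1_A f_-)$ reduces to a sum of bilinear terms between disjointly supported nonnegative pieces, hence is nonpositive, and it enters $q(f) - q(1_A f)$ with a negative sign. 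Collecting signs gives $q(1_A f) \le q(f)$, so $A$ is $q$-invariant.

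Finally, for the ``in particular'' statement: if $q$ is irreducible and $h \ge 0$ is a nontrivial excessive function, invariance of $A = \{h>0\}$ forces $\mu(A) = 0$ or $\mu(X \setminus A) = 0$; since $h \not\equiv 0$ the former is impossible, so $h > 0$ $\mu$-a.e. The one point demanding care throughout the general-$f$ step is the bookkeeping --- verifying that every piece lies in $D(q)$ so that all bilinear expressions are finite and the expansion is legitimate --- rather than any single hard estimate.
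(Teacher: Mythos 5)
Your proof is correct, and part (a) is exactly the paper's argument (test excessivity at $f=0$, note $0\wedge h=-h_-$). For part (b) you take a genuinely different route. The paper disposes of arbitrary signed $f$ in one stroke via the two-sided truncation $1_{\{h>0\}}f=\lim_{n\to\infty}(f\wedge(nh))\vee(-nh)$: since $g\vee(-nh)=-\bigl((-g)\wedge(nh)\bigr)$ and $q(-g)=q(g)$, excessivity of $nh$ applied twice yields $q((f\wedge(nh))\vee(-nh))\leq q(f\wedge (nh))\leq q(f)$, and lower semicontinuity finishes the proof with no bilinear expansion at all. You instead prove invariance only for nonnegative $g$ (one-sided truncation $g\wedge th\nearrow 1_A g$) and then pass to signed $f$ through the four-piece decomposition of $f_\pm$ along $A$ and $X\setminus A$ together with the sign lemma $q(a,b)\leq 0$ for disjointly supported nonnegative $a,b\in D(q)$; your domain bookkeeping ($1_Af_\pm\in D(q)$ by the nonnegative case, $1_{X\setminus A}f_\pm$ by linearity) is the right thing to check and does go through, so the argument is sound. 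The paper's truncation is shorter; what your route buys is a reusable reduction -- invariance against nonnegative test functions implies invariance in general under the first Beurling-Deny criterion -- and in fact the paper itself deploys precisely this decomposition-plus-sign-lemma device later, in Case~2 of the proof of Corollary~\ref{corollary:trichotomy}, so your method is very much in the paper's spirit even though it is not the proof given for this proposition.
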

\begin{proof}
 (a): $h_- = (-h) \vee 0 = - (h \wedge 0).$ We obtain 
 $$q(h_-) = q(-h_-) = q(h \wedge 0) \leq q(0) = 0.$$
 This shows $h_- \in \ker q$.
 
 (b): For $f \in L^p(X,\mu)$ the identity 
 $$1_{\{h > 0\}} f = \lim_{n \to \infty} (f \wedge (nh)) \vee (-nh)$$
 holds in $L^p(X,\mu)$. Since $nh$ is also excessive, we obtain using lower semicontinuity
 $$q(1_{\{h > 0\}} f) \leq \liminf_{n \to \infty} q((f \wedge (nh)) \vee (-nh)) \leq q(f).$$
 This shows the invariance of $\{h > 0\}$.
\end{proof}
\begin{corollary}\label{coro:perron frobenius}
 For $p \in \{0,2\}$ let $q$ be an irreducible closed quadratic form on $L^p(X,\mu)$ satisfying the first Beurling-Deny criterion. Then $\ker q$ is at most one-dimensional and if $\ker q \neq \{0\}$, there exists a strictly positive $h \in L^p(X,\mu)$ such that $\ker q = \R h$.
\end{corollary}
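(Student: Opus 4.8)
The plan is to combine the order-theoretic information already collected—namely that kernel elements are excessive (Lemma~\ref{lemma:form domain lattice}) together with parts (a) and (b) of the preceding Proposition—with a Perron--Frobenius type argument carried out purely at the level of the form, avoiding any appeal to a maximum principle for the associated operator.

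First I would show that every nonzero $h \in \ker q$ has constant sign and, after a change of sign, is strictly positive. Since $\ker q$ is a linear subspace, both $h$ and $-h$ lie in $\ker q$ and are hence $q$-excessive by Lemma~\ref{lemma:form domain lattice}. Applying part (a) of the preceding Proposition to $h$ and to $-h$ gives $h_- \in \ker q$ and $h_+ = (-h)_- \in \ker q$. Both are nonnegative and excessive, so by part (b) and irreducibility each of $h_+, h_-$ is either $0$ or strictly positive. As $h_+$ and $h_-$ have disjoint supports, they cannot both be strictly positive, and they cannot both vanish since $h \neq 0$; thus exactly one of them is strictly positive and $h$ does not change sign. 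In particular, replacing $h$ by $-h$ if necessary, there is a strictly positive $h \in \ker q$, so $\R h \subseteq \ker q$.

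It remains to prove $\ker q = \R h$. Fix a strictly positive $g \in \ker q$ (by the previous step every nonzero kernel element is, up to sign, of this form, so it suffices to treat this case). For $\lambda > 0$ the function $\lambda h$ lies in $\ker q$, and the lattice inequality of Lemma~\ref{lemma:form domain lattice} together with $q \geq 0$ forces $q(g \wedge \lambda h) = q(g \vee \lambda h) = 0$; hence $(g - \lambda h)_+ = g \vee (\lambda h) - \lambda h \in \ker q$. This function is nonnegative, so by part (b) and irreducibility it is either identically $0$ or strictly positive. Translating back, for every $\lambda > 0$ we obtain the dichotomy: either $g/h \leq \lambda$ $\mu$-a.e.\ or $g/h > \lambda$ $\mu$-a.e. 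Setting $\lambda_0 = \inf\{\lambda > 0 \mid g/h \leq \lambda \text{ $\mu$-a.e.}\}$, the dichotomy shows that the infimized set is an up-set with down-set complement, so $g/h \leq \lambda$ a.e.\ for every $\lambda > \lambda_0$ and $g/h > \lambda$ a.e.\ for every $\lambda < \lambda_0$; letting $\lambda$ run through rationals tending to $\lambda_0$ from above and below yields $g/h = \lambda_0$ a.e. Here $\lambda_0 \in (0,\infty)$ because $g,h$ are finite and strictly positive $\mu$-a.e.: $\lambda_0 = 0$ would force $g = 0$ and $\lambda_0 = \infty$ would force $g/h = \infty$ a.e., both impossible. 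Thus $g = \lambda_0 h$, and $\ker q = \R h$ with $h > 0$.

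The main obstacle is the middle step. The usual proofs of such a dichotomy invoke a strong maximum principle for the generator, which is not available in this generality; the point is to replace it by the form-level lattice identity $q(g\wedge \lambda h) + q(g \vee \lambda h) \leq q(g) + q(\lambda h)$ and the invariance of $\{w > 0\}$ for nonnegative $w \in \ker q$. Once the per-$\lambda$ dichotomy is in place, the passage to an a.e.-constant ratio is routine bookkeeping, provided one checks that $\lambda_0$ is finite and positive.
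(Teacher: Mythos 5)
Your proof is correct and takes essentially the same route as the paper: first you show every nonzero kernel element has strict sign (using that kernel functions are excessive, Proposition parts (a) and (b), plus irreducibility), and then you run a one-parameter sweep in $\lambda$ with a per-$\lambda$ sign dichotomy, concluding by a sup/inf argument that $g$ is a multiple of $h$. The only cosmetic differences are that you derive the dichotomy via the lattice inequality (showing $g \vee (\lambda h) \in \ker q$, hence $(g-\lambda h)_+ \in \ker q$), whereas the paper applies the fixed-sign property directly to the kernel element $h' - \alpha h$ and sets $\alpha_0 = \sup\{\alpha \mid h'-\alpha h > 0\}$, and that you spell out the bookkeeping at $\lambda_0$ which the paper dismisses as obvious.
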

\begin{proof}
 Assume there exists $0 \neq h \in \ker q$ (else there is nothing to show). Since $q$ satisfies the first Beurling-Deny criterion, we can assume $h \geq 0$ (else consider $|h|$ instead of $h$). Part (b) of the previous lemma yields $h > 0$ (use that functions in $\ker q$ are excessive). 
 
 Now let $h' \in \ker q_e$ and for $\alpha \in \R$ consider the function $g_\alpha =  h' - \alpha h \in \ker q$. Using part (b) of the previous lemma again, $g_\alpha$ has fixed sign, i.e., either $g_\alpha = 0$, $g_\alpha > 0$ or $g_\alpha < 0$.  Let $\alpha_0 = \sup \{\alpha \mid g_\alpha > 0\}$. Then, obviously, $0 = g_{\alpha_0} = h' - \alpha_0 h$. 
\end{proof}

\section{(Very) weak and abstract Poincaré and Hardy inequalities}

In this section we discuss very weak Poincaré and Hardy inequalities, which are valid for most quadratic forms on Hilbert spaces.  We then show how they yield weak Hardy inequalities for forms satisfying the first Beurling-Deny criterion with trivial kernel. Moreover, we prove abstract Hardy inequalites, which hold for all forms satisfying the first Beurling-Deny criterion.

Our very weak Poincaré and Hardy inequalities are based on the following rather elementary observation.

\begin{lemma}[Completeness of weakly compact sets and continuity]
 Let $q \colon H \to [0,\infty]$ be a closed quadratic form on the Hilbert space $H$ and let $C$ be a weakly compact set in $H$. 
 Then   $D(q) \cap C$ equipped with the pseudometric induced by the seminorm $q^{1/2}$ is complete  and for any $w \in (\ker q)^\perp$ the map
 $$D(q) \cap C \to \R, \quad f \mapsto \as{w,f}$$
 is continuous with respect to $q$.
\end{lemma}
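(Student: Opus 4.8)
The plan is to lean on two facts. The first is that closedness of a quadratic form on a Hilbert space is equivalent to weak lower semicontinuity, exactly as in assertion (iii) of the opening Lemma (the argument there is stated for $L^2(X,\mu)$ but is the standard Hilbert space fact). The second is that a weakly compact set in a Hilbert space is norm-bounded and weakly sequentially compact, so that every sequence in $C$ has a subsequence converging weakly to a point of $C$, and every norm-bounded sequence in $H$ has a weakly convergent subsequence. Throughout, the relevant pseudometric is $d_q(f,g) = q(f-g)^{1/2}$, and since pseudometric topologies are first countable it suffices to work with sequences.

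For completeness, I would start with a $q$-Cauchy sequence $(f_n)$ in $D(q) \cap C$. Since $C$ is weakly compact I can extract a subsequence $f_{n_k} \rightharpoonup f$ weakly with $f \in C$. To identify $f$ as the $q$-limit, fix $m$ and apply weak lower semicontinuity to $f_{n_k} - f_m \rightharpoonup f - f_m$, obtaining
$$q(f - f_m) \leq \liminf_{k \to \infty} q(f_{n_k} - f_m).$$
Given $\eps > 0$, the Cauchy property bounds the right-hand side by $\eps$ for all large $m$. This simultaneously shows $f - f_m \in D(q)$ — whence $f \in D(q) \cap C$ — and that $q(f - f_m)^{1/2} \to 0$, i.e. $f_m \to f$ in the pseudometric.

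For continuity, let $f_n \to f$ in $D(q) \cap C$ with respect to $q^{1/2}$ and set $g_n = f_n - f \in D(q)$, so that $q(g_n) \to 0$; the goal is $\as{w,g_n} \to 0$. I would argue by contradiction: if $|\as{w,g_n}| \geq \delta$ along a subsequence, then, because $C$ is bounded, the sequence $(g_n)$ is norm-bounded, so a further subsequence converges weakly, $g_{n_{k_j}} \rightharpoonup g$. Weak lower semicontinuity gives $q(g) \leq \liminf_j q(g_{n_{k_j}}) = 0$, hence $g \in \ker q$. As $w \in (\ker q)^\perp$ we get $\as{w,g} = 0$, while weak convergence forces $\as{w,g_{n_{k_j}}} \to \as{w,g} = 0$, contradicting $|\as{w,g_{n_{k_j}}}| \geq \delta$.

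The main obstacle, and the point where the hypotheses genuinely interact, is that the $q$-pseudometric is far weaker than the norm, so $q$-convergence by itself says little about the vectors. The resolving device in both parts is the same: extract a weak limit (from weak compactness of $C$, respectively from norm-boundedness), then use weak lower semicontinuity of $q$ to transfer the vanishing of $q$ onto that weak limit, placing it either in $D(q)$ for completeness or in $\ker q$ for continuity. I would stress that in the continuity argument the differences $g_n$ need not lie in $C$, so there one relies only on norm-boundedness of $C$ rather than on its weak compactness.
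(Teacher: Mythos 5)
Your proposal is correct and follows essentially the same route as the paper: extract a weak limit (Eberlein--Smulian / weak sequential compactness, respectively norm-boundedness plus reflexivity) and transfer the vanishing of $q$ to that limit via weak lower semicontinuity of the closed form, placing it in $D(q)$ for completeness and in $\ker q$ for continuity. Your closing remark is in fact a small improvement in precision: the paper invokes Eberlein--Smulian ``again'' for the differences $f - f_n$, which do not lie in $C$, whereas you correctly observe that only norm-boundedness of $C$ is needed there.
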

\begin{proof}
   Let $(f_n)$ be $q$-Cauchy in $D(q) \cap C$.  By the Eberlein-Smulian theorem  $(f_n)$ has a subsequence $(f_{n_k})$ that converges weakly to some $f \in C$. The weak lower semicontinuity of $q$ yields
  $$q(f - f_n) \leq \liminf_{k \to \infty} q(f_{n_k} - f_n).$$
  This shows $f \in D(q)$ and  $f_n \to f$ with respect to $q$. 
  
 Let $w \in H$ with $w \perp \ker q$. Assume that the map $f \mapsto \as{w,f}$ is not continuous. Then there exists $\varepsilon > 0$ and $f_n,f \in D(q) \cap C$  with    $|\as{w,f - f_n}| \geq \varepsilon$ for all $n \geq 1$ and $q(f - f_n) \to 0$, as $n \to \infty$. Employing the Eberlein-Smulian theorem again, we can assume without loss of generality $f - f_n \to h$ weakly in $H$. The weak lower semicontinuity of $q$ yields
  $$q(h) \leq \liminf_{n \to \infty} q(f - f_n) = 0,$$
  so that $h \in \ker q$. But then 
  $$0 = |\as{w,h}|  = \lim_{n\to \infty}|\as{w,f_n - f}| \geq \varepsilon,$$
  a contradicition. 
\end{proof}

\begin{theorem}[Very weak Poincaré/Hardy inequality]\label{theorem:very weak}
 Let $q\colon H \to [0,\infty]$ be a closed quadratic form on the Hilbert space $H$ and let $\Phi \colon H \to [0,\infty]$ be a sublinear functional such that for every $R \geq 0$ the sublevel set $B_R^\Phi = \{f \in H \mid \Phi(f) \leq R\}$ is closed and bounded in $H$. Then for every $w \in (\ker q)^\perp$ there exists a decreasing function $\alpha = \alpha_w \colon (0,\infty) \to (0,\infty)$ such that for any $r> 0$ and all $f \in  H$ we have
 $$|\as{w,f}|^2 \leq \alpha(r) q(f) + r \Phi(f)^2. $$
\end{theorem}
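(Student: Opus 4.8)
The plan is to argue by contradiction and to reduce the whole statement to the continuity half of the preceding lemma. Fix $w \in (\ker q)^\perp$. The first thing I would record is that the hypotheses on $\Phi$ make the sublevel sets weakly compact: a sublinear functional is convex, so each $B_R^\Phi$ is convex, and being in addition norm-closed and bounded it is weakly closed by Mazur's theorem, hence weakly compact in the Hilbert space $H$. Thus the preceding lemma applies with $C = B_R^\Phi$ and tells us that $f \mapsto \as{w,f}$ is continuous with respect to $q$ on $D(q) \cap B_R^\Phi$. This is the only input I will need from the lemma.

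Next I would introduce, for each $r > 0$, the set of admissible constants
$$A(r) = \{c \ge 0 \mid |\as{w,f}|^2 \le c\,q(f) + r\,\Phi(f)^2 \text{ for all } f \in D(q)\}$$
and prove $A(r) \neq \emptyset$. Suppose not. Then for every $n \in \N$ there is $f_n \in D(q)$ with $|\as{w,f_n}|^2 > n\,q(f_n) + r\,\Phi(f_n)^2$; in particular $\as{w,f_n} \neq 0$, so after rescaling (both sides are homogeneous of degree two in $f_n$) I may assume $|\as{w,f_n}| = 1$. The violated inequality then forces $q(f_n) < 1/n$ and $\Phi(f_n)^2 < 1/r$, so $q(f_n) \to 0$ and $(f_n)$ lies in $D(q) \cap B_R^\Phi$ for $R = 1/\sqrt r$. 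Since $q(f_n)^{1/2} \to 0$ means $f_n \to 0$ in the $q$-pseudometric and $0 \in D(q) \cap B_R^\Phi$, the continuity supplied by the lemma yields $\as{w,f_n} \to \as{w,0} = 0$, contradicting $|\as{w,f_n}| = 1$. Hence $A(r) \neq \emptyset$.

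Finally I would set $\alpha(r) = \max\{\inf A(r),\, 1/r\}$ and verify the required properties. Fixing $f$ and letting $c \downarrow \inf A(r)$ (every $c > \inf A(r)$ lies in $A(r)$, which is upward closed) shows $\inf A(r) \in A(r)$, so the inequality holds with $\inf A(r)$ and a fortiori with the larger value $\alpha(r)$; the summand $1/r$ guarantees $\alpha(r) \in (0,\infty)$. For monotonicity observe that $A(r_1) \subseteq A(r_2)$ whenever $r_1 \le r_2$, because enlarging the coefficient of $\Phi(f)^2$ only relaxes the inequality; hence $\inf A(r)$ is decreasing, and as the maximum of two decreasing functions $\alpha$ is decreasing.

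I expect the essential point to be the normalization in the contradiction argument rather than any deep analytic difficulty. The subtlety is that a normalization like $q(f) = 1$ does not control $\|f\|$ or $\as{w,f}$ at all (the kernel and the $L^2$-scale are unconstrained), so one must instead normalize the functional value to $|\as{w,f_n}| = 1$; this is precisely what simultaneously drives $q(f_n) \to 0$ and, through the slack term $r\,\Phi(f_n)^2$, traps the sequence inside a fixed weakly compact sublevel set $B_R^\Phi$. That configuration is exactly the one the preceding lemma is designed to rule out, so the compactness bookkeeping is the part that requires care.
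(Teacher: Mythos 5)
Your proof is correct and follows essentially the same route as the paper: a contradiction argument normalizing $|\as{w,f_n}| = 1$, which forces $q(f_n)\to 0$ and traps $(f_n)$ in the weakly compact (closed, bounded, convex) sublevel set $B_{r^{-1/2}}^\Phi$, where the preceding lemma's continuity statement yields the contradiction. Your explicit construction of $\alpha$ via $\alpha(r)=\max\{\inf A(r),\,1/r\}$ merely spells out the monotonicity bookkeeping that the paper leaves implicit when it negates the statement to produce a single bad $r$.
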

\begin{proof}
   Suppose that the statement does not hold. Then there exist $r > 0$ and a sequence $(f_n)$ in $D(q)$ with
   $$|\as{w,f_n}|^2  > nq(f_n) + r \Phi(f_n)^2.$$
   In particular, $|\as{w,f_n}| > 0$. Using that $q$ is a quadratic form and that $\Phi$ is sublinear, we can assume $|\as{w,f_n}| = 1$. This implies $q(f_n) \to 0$ and $f_n \in B_{r^{-1/2}}^\Phi$. Since in Hilbert spaces closed bounded convex sets are weakly compact, the previous lemma applied to $C =B_{r^{-1/2}}^\Phi$ yields $\as{w,f_n} \to 0$, a contradiction. 
\end{proof}

\begin{remark}
 \begin{enumerate}[(a)]
  \item We call these inequalities very weak Hardy/Poincaré inequalities because in the $L^2$-case they have  $|\int_X fw d\mu|^2$ on their left  side, while for weak Hardy/Poincaré inequalities we would like to have $\int_X f^2w d\mu$ for some nonnegative $w$ on the left side. 
  \item The statement can be strengthened a bit. Since $w \in (\ker q)^\perp$, the sublinear functional $\Phi$ can be replaced by the sublinear functional 
 $$\widetilde \Phi (f)  = \inf \{\Phi(f+h) \mid h \in \ker q\}.$$
 \end{enumerate}
\end{remark}

\begin{theorem}[Weak Hardy inequality] \label{theorem:weak hardy}
 Let $q$ be a closed quadratic form on $L^2(X,\mu)$  satisfying the first Beurling-Deny criterion with $\ker q = \{0\}$. For every strictly positive $h \in L^2(X,\mu)$ and every nonnegative $w \in L^2(X,\mu) \cap L^1(X,h^2\mu)$, there exists a decreasing function $\alpha = \alpha_{w,h} \colon (0,\infty) \to (0,\infty)$ such that
 \begin{align*}
  \int_X |f|^2 w d\mu \leq \alpha(r) q(f) + r \aV{f/h}_\infty^2,\quad f \in D(q).
 \end{align*}
\end{theorem}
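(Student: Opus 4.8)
The plan is to deduce the inequality from the very weak Hardy inequality of Theorem~\ref{theorem:very weak}, applied on $H = L^2(X,\mu)$ with the sublinear functional $\Phi(f) = \av{f/h}_\infty$. First I would check that this $\Phi$ meets the hypotheses of that theorem. Its sublevel set $B_R^\Phi = \{f \in L^2(X,\mu) \mid |f| \le Rh \ \mu\text{-a.e.}\}$ is bounded, since $\av{f}_2 \le R\av{h}_2 < \infty$ because $h \in L^2(X,\mu)$, and it is closed, since $L^2$-convergence passes to a $\mu$-a.e.\ convergent subsequence along which the pointwise bound $|f| \le Rh$ is preserved. As $\ker q = \{0\}$, we have $(\ker q)^\perp = L^2(X,\mu)$, so Theorem~\ref{theorem:very weak} is available for every test function $v \in L^2(X,\mu)$. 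Finally, invoking the first Beurling-Deny criterion $q(|f|) \le q(f)$ together with $\av{|f|/h}_\infty = \av{f/h}_\infty$ and $\int_X |f|^2 w\, d\mu = \int_X f^2 w\, d\mu$, it suffices to prove the inequality for nonnegative $f$. I therefore fix $f \in D(q)$ with $f \ge 0$ and write $s = \av{f/h}_\infty$, so that $0 \le f \le sh$ $\mu$-a.e.

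The obstruction to applying Theorem~\ref{theorem:very weak} directly is that the natural test function $hw$ need not lie in $L^2(X,\mu)$, and I would remove this by truncating $w$. For $T > 0$ put $w_T = w\1_{\{w \le T\}}$. The discarded part is controlled purely by $\Phi$: from $f^2 \le s^2 h^2$ one gets
\[
 \int_X f^2 (w - w_T)\, d\mu = \int_X f^2 w\, \1_{\{w > T\}}\, d\mu \le s^2 \int_{\{w > T\}} h^2 w\, d\mu =: s^2\, \eta(T),
\]
and since $h^2 w \in L^1(X,\mu)$ by the assumption $w \in L^1(X,h^2\mu)$, dominated convergence yields $\eta(T) \to 0$ as $T \to \infty$. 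For the truncated weight the test function is now square integrable: using $w_T \le T$ we have $w_T^2 \le T w_T \le T w$, whence
\[
 \int_X (h w_T)^2\, d\mu = \int_X h^2 w_T^2\, d\mu \le T \int_X h^2 w\, d\mu < \infty,
\]
so $h w_T \in L^2(X,\mu)$.

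The next step turns the quadratic left-hand side into a linear functional, and this is exactly where the $L^\infty_h$-structure of $\Phi$ enters. From $0 \le f \le sh$ I estimate $f^2 \le s\, h f$ $\mu$-a.e.\ and hence
\[
 \int_X f^2 w_T\, d\mu \le s \int_X f\, (h w_T)\, d\mu = s\, \as{h w_T, f}.
\]
Applying Theorem~\ref{theorem:very weak} to the test function $v = h w_T$ at a parameter $\rho > 0$ produces a decreasing $\alpha_{h w_T}$ with
\[
 \as{h w_T, f} \le \bigl(\alpha_{h w_T}(\rho)\, q(f) + \rho\, s^2\bigr)^{1/2} \le \alpha_{h w_T}(\rho)^{1/2} q(f)^{1/2} + \rho^{1/2} s.
\]
Inserting this, applying Young's inequality $\alpha_{h w_T}(\rho)^{1/2} q(f)^{1/2}\cdot s \le \tfrac{1}{2\lambda}\alpha_{h w_T}(\rho)\, q(f) + \tfrac{\lambda}{2} s^2$ with a free parameter $\lambda > 0$, and then adding back the tail from the previous paragraph, I arrive at
\[
 \int_X f^2 w\, d\mu \le \frac{\alpha_{h w_T}(\rho)}{2\lambda}\, q(f) + \Bigl(\frac{\lambda}{2} + \rho^{1/2} + \eta(T)\Bigr) s^2 .
\]

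It remains to choose the parameters and to package the constant into a single decreasing function. Given $r > 0$, I would pick $T$ with $\eta(T) \le r/3$, then $\rho$ with $\rho^{1/2} \le r/3$, then $\lambda \le 2r/3$, so that the coefficient of $s^2 = \av{f/h}_\infty^2$ is at most $r$; setting $A(r) = \alpha_{h w_T}(\rho)/(2\lambda) \in (0,\infty)$ this gives $\int_X f^2 w\, d\mu \le A(r)\, q(f) + r\, \av{f/h}_\infty^2$ for every $r > 0$. Since enlarging $r$ only relaxes the inequality, it persists with the non-increasing envelope $\inf_{0 < r' \le r} A(r')$ in place of $A(r)$ (the case $q(f) = 0$ being trivial, as then $f \in \ker q = \{0\}$); defining $\alpha(r) = 1 + \inf_{0 < r' \le r} A(r')$ yields a decreasing function with values in $(0,\infty)$ for which the asserted inequality holds. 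The main obstacle in this scheme is the integrability defect of $hw$, resolved by the truncation $w_T$; the second point requiring care is that Theorem~\ref{theorem:very weak} only bounds a squared linear functional, so one must first linearize $\int_X f^2 w\, d\mu$ via the pointwise bound $f \le sh$ before it can be brought to bear.
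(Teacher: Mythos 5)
Your proposal is correct, and it rests on the same key ingredient as the paper's proof --- Theorem~\ref{theorem:very weak} applied with the sublinear functional $\Phi(f) = \|f/h\|_\infty$, whose sublevel sets you verify to be closed and bounded exactly as the paper does --- but the way you upgrade the resulting linear inequality to the quadratic one is genuinely different. The paper argues by contradiction: it applies Theorem~\ref{theorem:very weak} with the test function $w$ itself (this is precisely where the hypothesis $w \in L^2(X,\mu)$ enters), takes a hypothetical sequence $(f_n)$ with $\|f_n/h\|_\infty = 1$ and $\int_X f_n^2 w\, d\mu > n q(f_n) + r$, deduces $q(f_n) \to 0$ and hence $\as{f_n,w} \to 0$, and then converts this linear decay into $\int_X f_n^2 w\, d\mu \to 0$ by dominated convergence (using $|f_n| \le h$ and $h^2 w \in L^1(X,\mu)$), a contradiction. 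You instead argue directly: you truncate the weight to $w_T$, take $h w_T$ as the test function, linearize via the pointwise bound $f^2 \le \|f/h\|_\infty\, h f$, and combine Young's inequality with an explicit choice of the three parameters $T,\rho,\lambda$, finishing with a monotone envelope to make $\alpha$ decreasing (all of these steps check out, including the envelope argument, since $q(f)\ge 0$ lets the infimum pass onto the coefficient). Your route buys two things: it avoids the compactness/contradiction step, and --- worth noting --- it never uses $w \in L^2(X,\mu)$: since $h w_T \in L^2(X,\mu)$ already follows from $w_T \le T$ and $h^2 w \in L^1(X,\mu)$, your argument actually proves the theorem under the weaker hypothesis $w \in L^1(X,h^2\mu)$ alone. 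What the paper's route buys is brevity: its contradiction argument takes a few lines, whereas the direct argument requires the truncation, the linearization and the parameter bookkeeping.
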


\begin{proof}
Consider the sublinear functional $\Phi \colon L^2(X,\mu) \to [0,\infty],\, \Phi(f) = \aV{f/h}_\infty$. It satisfies $\Phi(f) \leq C$ if and only if $|f| \leq C h$. Since $h \in L^2(X,\mu)$, this implies that the sublevel sets of $\Phi$ are bounded and closed.  Since $\ker q = \{0\}$, Theorem~\ref{theorem:very weak} applied to $w$ and $\Phi$ yields a decreasing function $\alpha \colon (0,\infty) \to (0,\infty)$ with 
\begin{align}\label{ineq:very weak}
 |\as{f,w}|^2 \leq \alpha(r) q(f) + r \aV{f/h}_\infty^2 \tag{$\heartsuit$}
\end{align}
for all $r > 0$ and $f \in L^2(X,\mu)$. Now suppose that the claimed inequality does not hold. Then there exists $r > 0$ and $(f_n)$ with $\av{f_n/h}_\infty = 1$ and 
$$\int_{X} |f_n|^2 w d\mu > n q(f_n) + r. $$
Since $|f_n| \leq h$ and $w h^2 \in L^1(X,\mu)$, this implies $q(f_n) \to 0$. Moreover, by the first Beurling-Deny criterion, we can assume without loss of generality $f_n \geq 0$ (else consider $|f_n|$).  Inequality~\eqref{ineq:very weak} then yields $f_n w \to 0$  in $L^1(X,\mu)$. We can assume without loss of generality that this convergence also holds $\mu$-a.e. (else pass to a suitable subsequence). With this at hand and $|f_n|^2 w \leq h^2w \in L^1(X,\mu)$, we deduce with Lebesgue's dominated convergence theorem
$$0 = \lim_{n \to \infty} \int_{X} |f_n|^2 w d\mu > \liminf_{n \to \infty} n q(f_n) + r > r > 0,$$
a contradiction.
%
%
\end{proof}

\begin{remark}
\begin{enumerate}[(a)]
 \item This statement shows that for forms with the first Beurling-Deny criterion, the condition $\ker  q = \{0\}$ implies a weak Hardy inequality.  So, naturally, one may wonder whether the same is true for forms with $\ker q \neq \{0\}$. Of course, in this case, functions in $D(q)$ should be replaced by functions in  $(\ker q)^\perp$ (with respect to the $L^2(X,w\mu)$-inner product) and hence one is looking for a weak Poincaré inequality. However, examples from \cite{RW} show that weak Poincaré inequalities need not hold in general and in Section~\ref{section:weak poincare} we discuss which additional assumptions are needed. 
 
 \item Weak Poincaré inequalities were systematically introduced in \cite{RW} to characterize the rate of convergence to equilibrium for conservative Markovian semigroups without spectral gap. In our case $\ker q = \{0\}$, the associated semigroup  $(T_t)$ does not converge to an equilibrium (or projection to a ground state) but strongly to $0$, as $t \to \infty$, see e.g. \cite[Theorem~1.1]{KLWV}. Still one can ask for the corresponding rate of convergence. 
 
 If $q$ has a strictly positive excessive  function $h$, then $(T_t)$ is a contraction on $L^\infty_h(X,\mu)$. With the same arguments as in the proof of \cite[Theorem~2.1]{RW}, one can then show that the inequality
\begin{align}\label{inequality:weak poincare concrete}
 \int_X |f|^2 d\mu \leq \alpha(r) q(f) + r \aV{f/h}_\infty^2,\quad r > 0,\, f \in D(q), \tag{$\diamondsuit$}
\end{align}
with some decreasing function $\alpha \colon (0,\infty) \to (0,\infty)$ implies
$$\av{T_t f}_2^2 \leq \xi(t) (\av{f}_2^2 + \av{f/h}^2_\infty), \quad f \in L^2(X,\mu),  $$
with $\xi(t) = \inf\{r > 0 \mid -\frac{1}{2} \alpha(r) \log r \leq t\}$.  Since $\xi(t) \to 0$ as $t \to \infty$, this is a uniform rate of convergence  for the semigroup.

If $\cE$ is a Dirichlet form on $L^2(X,\mu)$ with $\mu(X) < \infty$ and $\ker \cE = \{0\}$, our theorem above applied to $w = 1$ and the excessive function $h = 1$ yields  Inequality~\eqref{inequality:weak poincare concrete} for an appropriate function $\alpha$. Note however, that our result only yields the existence of $\alpha$ but does not give an estimate for $\alpha$ (and $\xi$).
\end{enumerate}
\end{remark}

We can extend this weak Hardy inequality to all strictly positive $h$ by passing to extended spaces. In this case, the condition on the kernel is a condition on the kernel of the extended form.

\begin{corollary}\label{coro:weak hardy extended form}
 Let $q$ be a closed quadratic form on $L^2(X,\mu)$  satisfying the first Beurling-Deny criterion with $\ker q_e = \{0\}$. Let $h \in L^0(X,\mu)$ be strictly positive.  For all strictly positive $w \in L^1(X,(1+ h^2) \mu)$ 
 %
 there exists a decreasing function $\alpha = \alpha_{w,h} \colon (0,\infty) \to (0,\infty)$ such that for all $r > 0$
 $$\int_X |f|^2 w d\mu \leq \alpha(r) q(f) + r \aV{f/h}_\infty^2, \quad f \in D(q_e).$$
\end{corollary}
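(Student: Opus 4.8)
The plan is to argue by contradiction, following the scheme of the proof of Theorem~\ref{theorem:weak hardy}, but replacing the appeal to the very weak Hardy inequality (Theorem~\ref{theorem:very weak}), which forced $h \in L^2(X,\mu)$ and lived on a Hilbert space, by the continuity property of $q^+$ recorded in Lemma~\ref{lemma:continuity q^+}. This is precisely what lets one drop the integrability assumption on $h$ and work directly on all of $D(q_e)$, with no approximation by elements of $D(q)$.

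Two preliminary reductions come first. Since $q_e$ extends $q$ and $q(f) = \infty$ for $f \in D(q_e) \setminus L^2(X,\mu)$, we have $q(f) \ge q_e(f)$ for every $f \in D(q_e)$; hence it suffices to prove the asserted inequality with $q_e(f)$ in place of $q(f)$. Moreover, $w \in L^1(X,(1+h^2)\mu)$ means $w \in L^1(X,\mu)$ and $h^2 w \in L^1(X,\mu)$, the latter being the integrability actually used. Fixing $r > 0$, I would assume that no finite constant makes the inequality hold and produce, for each $n$, a function $f_n \in D(q_e)$ with $\int_X |f_n|^2 w\,d\mu > n\,q_e(f_n) + r\av{f_n/h}_\infty^2$. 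As in Theorem~\ref{theorem:weak hardy}, $\av{f_n/h}_\infty$ must be finite and nonzero (otherwise the inequality fails trivially), so by the degree-two homogeneity of both sides I normalize $\av{f_n/h}_\infty = 1$, giving $|f_n| \le h$; the first Beurling-Deny criterion for $q_e$ then lets me replace $f_n$ by $|f_n|$ and assume $f_n \ge 0$. Since $|f_n|^2 w \le h^2 w \in L^1(X,\mu)$, the left-hand sides are bounded by $\int_X h^2 w\,d\mu$, and therefore $q_e(f_n) \to 0$.

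The heart of the argument is to upgrade $q_e(f_n) \to 0$ to pointwise convergence $f_n \to 0$. I would pass to a subsequence with $\sum_n q_e(f_n) < \infty$. Because the $f_n$ are nonnegative, $q^+(f_n) = q_e(f_n)$, so $\sum_n q^+(f_n) < \infty$ and Lemma~\ref{lemma:continuity q^+} yields $q^+(\limsup_n f_n) = 0$. The bound $0 \le f_n \le h$ with $h$ finite $\mu$-a.e.\ forces $\limsup_n f_n$ to be an element of $L^0_+(X,\mu) \cap D(q_e)$, where $q^+$ and $q_e$ coincide; hence $\limsup_n f_n \in \ker q_e = \{0\}$, i.e.\ $f_n \to 0$ $\mu$-a.e. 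A final application of dominated convergence with dominating function $h^2 w$ gives $\int_X |f_n|^2 w\,d\mu \to 0$, contradicting $\int_X |f_n|^2 w\,d\mu > r > 0$. The existence of a decreasing $\alpha = \alpha_{w,h}$ follows by taking, for each $r$, the least admissible constant, which is automatically nonincreasing in $r$.

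The step I expect to be the main obstacle is the identification $\limsup_n f_n \in \ker q_e$. One cannot simply invoke the ``in particular'' clause of Lemma~\ref{lemma:continuity q^+}, since $\ker q^+$ may well be nontrivial: by Lemma~\ref{lemma:kernel q^+} it can contain $\infty \cdot 1_A$-type functions that are invisible to $\ker q_e$. The uniform bound $f_n \le h$ is exactly the device that confines $\limsup_n f_n$ to the finite-valued regime $D(q_e) \cap L^0_+(X,\mu)$, where $q^+ = q_e$ and the hypothesis $\ker q_e = \{0\}$ applies. Verifying this identification carefully, together with the equality $q^+ = q_e$ on $D(q_e) \cap L^0_+(X,\mu)$, is where the real work lies; the remaining normalization and convergence steps are routine.
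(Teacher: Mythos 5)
Your proof is correct, but it follows a genuinely different route from the paper. The paper's own proof is a three-line change-of-measure argument: it sets $\tilde\mu = w\mu$, observes that the restriction of $q_e$ to $L^2(X,\tilde\mu)$ is a closed form with the first Beurling-Deny criterion and trivial kernel, applies Theorem~\ref{theorem:weak hardy} on $(X,\tilde\mu)$ with weight $1$ (the hypotheses $w \in L^1(X,(1+h^2)\mu)$ translate exactly into $1 \in L^2(X,\tilde\mu)\cap L^1(X,h^2\tilde\mu)$ and $h \in L^2(X,\tilde\mu)$), and finally extends the inequality from $D(q_e)\cap L^2(X,\tilde\mu)$ to all of $D(q_e)$ by noting that $\av{f/h}_\infty < \infty$ forces $f \in L^2(X,\tilde\mu)$. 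You instead bypass Theorem~\ref{theorem:weak hardy} (and hence the weak-compactness argument of Theorem~\ref{theorem:very weak}) entirely, running a direct contradiction argument through the $q^+$-machinery: summable $q_e$-values along a subsequence, Lemma~\ref{lemma:continuity q^+} giving $q^+(\limsup_n f_n)=0$, and the bound $f_n \le h$ confining the limsup to $L^0_+(X,\mu)$, where $q^+$ and $q_e$ agree, so that $\ker q_e = \{0\}$ applies. This is in fact the same mechanism the paper uses later in Step~1 of Theorem~\ref{theorem:subcriticality} (the implication (iii) $\Rightarrow$ (viii)), including the identification $q^+ = q_e$ on $D(q_e)\cap L^0_+(X,\mu)$, which the paper also invokes there without detailed proof; your handling of the pitfall that $\ker q^+$ may be nontrivial while $\ker q_e = \{0\}$ is exactly right. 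What the two approaches buy: the paper's proof is shorter and illustrates the measure-change flexibility of the $L^0$-framework, while yours is self-contained modulo Section~\ref{section:BD}, and it actually uses only $h^2w \in L^1(X,\mu)$ rather than the full hypothesis $w \in L^1(X,(1+h^2)\mu)$, so it proves a marginally stronger statement. One trade-off worth noting: your argument leans essentially on $\ker q_e = \{0\}$, so, unlike the paper's route, it cannot be specialized to reprove Theorem~\ref{theorem:weak hardy} itself, whose hypothesis $\ker q = \{0\}$ is strictly weaker.
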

\begin{proof}
 We let $\tilde \mu = w \mu$.  The restriction of $q_e$ to $L^2(X,\tilde \mu)$ is a closed quadratic form satisfying the first Beurling-Deny criterion and it has trivial kernel.  Our assumptions on $w$  yield $1 \in L^2(X,\tilde \mu) \cap L^1(X,h^2 \tilde \mu)$. The previous theorem yields a decreasing function $\alpha$ sucht that 
 $$ \int_X |f|^2  d \tilde \mu \leq \alpha(r) q_e(f) + r \aV{f/h}_\infty^2, $$
 for all $f \in   D(q_e) \cap L^2(X,\tilde \mu)$. By our choice of $w$ the inclusion $f/h  \in L^\infty(X,\mu)$ implies $f \in L^2(X,\tilde \mu)$. Hence, the above inequality is indeed true for all $f \in D(q_e)$. Since $\tilde \mu = w \mu$, the claim follows.  
\end{proof}

We finish this section with an abstract Hardy inequality that is valid for all forms satisfying the first Beurling-Deny criterion. It compares a transform of the given form with a quadratic form, which is not necessarily positive. 

\begin{proposition}[Abstract Hardy inequality]\label{prop:abstract hardy}
 Let $q$ be a closed quadratic form on $L^2(X,\mu)$ satisfying the first Beurling-Deny criterion. Let $h \in D(q_e)$ be  nonnegative and let $f \in D(q_e)$ such that $hf, hf^2 \in D(q_e)$. Then 
 $$q_e(hf) \geq q_e(hf^2,h).$$
\end{proposition}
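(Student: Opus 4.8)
The plan is to reduce the inequality to the nonnegativity of the bilinear form on \emph{disjointly supported} pieces together with a purely combinatorial identity for simple functions, and then to remove the simplicity assumption by approximation. First I would reduce to the case $f \geq 0$: since $h \geq 0$ we have $|hf| = h|f|$ and $h|f|^2 = hf^2$, and $h|f| = |hf| \in D(q_e)$ because $D(q_e)$ is a lattice (Lemma~\ref{lemma:form domain lattice}). The first Beurling-Deny criterion for $q_e$ (Proposition~\ref{prop:existence of extended form}) gives $q_e(h|f|) = q_e(|hf|) \leq q_e(hf)$, so it suffices to prove $q_e(h|f|) \geq q_e(h|f|^2,h) = q_e(hf^2,h)$. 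Hence I assume $f \geq 0$ from now on.

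The key sign fact is the following: if $u,v \in D(q_e)$ are nonnegative with disjoint supports ($uv = 0$), then $q_e(u,v) \leq 0$. Indeed $u+v = |u-v|$ pointwise, so the first Beurling-Deny criterion yields $q_e(u+v) = q_e(|u-v|) \leq q_e(u-v)$; expanding both sides by bilinearity and cancelling leaves $4q_e(u,v) \leq 0$.

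Next comes the model computation for simple $f$. Suppose $f = \sum_i c_i 1_{A_i}$ with the $A_i$ pairwise disjoint, and set $h_i = 1_{A_i}h \geq 0$, so that $h = \sum_i h_i$, $hf = \sum_i c_i h_i$ and $hf^2 = \sum_i c_i^2 h_i$. Assuming each $h_i \in D(q_e)$, bilinearity and symmetrization in $i,j$ give
$$q_e(hf) - q_e(hf^2,h) = \sum_{i,j}(c_i c_j - c_i^2)\,q_e(h_i,h_j) = -\frac{1}{2}\sum_{i,j}(c_i-c_j)^2\,q_e(h_i,h_j).$$
The diagonal terms vanish, and for $i \neq j$ the functions $h_i,h_j$ are nonnegative with disjoint supports, whence $q_e(h_i,h_j) \leq 0$ by the previous step; thus the right-hand side is $\geq 0$. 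This is the rigorous counterpart of the heuristic identity $q_e(hf)-q_e(hf^2,h) = -\frac{1}{2}\iint (s-t)^2\,q_e(dm_s,dm_t)$, where $dm_t$ denotes the level-slab of $h$ over $\{f \approx t\}$ and disjointness of the slabs forces the integrand to be nonnegative.

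The main obstacle is the final approximation step, and it has two facets. First, for a general level decomposition the indicator-truncated pieces $h_i = 1_{A_i}h$ need not lie in $D(q_e)$, so the model computation cannot be applied verbatim; the remedy is to build the approximants from genuine form-domain elements, namely the lattice combinations $(hf - th)_+ = h(f-t)_+ \in D(q_e)$ (formed from $hf,h \in D(q_e)$), and to realize the level-slabs as controlled limits of such functions. Second, while lower semicontinuity of $q_e$ controls the quadratic term $q_e(hf)$ under an approximation $f_n \to f$, the bilinear term $q_e(\,\cdot\,,h)$ requires genuine convergence in the $q_e$-form topology of the approximants toward $hf^2$. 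Here I would first truncate to reduce to bounded $f$ and bounded $h$ (passing the truncation limits by $L^2$-dominated convergence and lower semicontinuity), and then use density of $D(q)$ in $D(q_e)$ together with the form-continuity of the absolute value and the lattice operations (Lemma~\ref{lemma:ancona}) to justify the limit. The algebraic heart is elementary; the entire difficulty lies in organizing this limiting procedure so that both the combinatorial identity and the sign condition survive.
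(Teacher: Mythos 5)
Your algebraic core is sound, and it is in fact the mirror image of the paper's: you use $q_e(u,v)\leq 0$ for disjointly supported nonnegative $u,v\in D(q_e)$ where the paper uses $\as{G_\alpha u,v}\geq 0$ for nonnegative $u,v\in L^2(X,\mu)$, and the symmetrization identity is the same in both cases (your reduction to $f\geq 0$ is fine but not even needed, since the pieces $1_{A_i}h$ are nonnegative regardless of the signs of the $c_i$). The genuine gap is the approximation step, which you explicitly defer and which is not a matter of organizing known tools --- it is where the entire difficulty of the proposition sits, and the two remedies you name pull in opposite directions. Your model computation needs pieces $h_i=1_{A_i}h$ that are simultaneously (a) in $D(q_e)$ and (b) disjointly supported. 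Indicator slabs satisfy (b) but not (a): already for the Dirichlet energy on $H^1(\mathbb{R})$, the function $1_Ah$ has jumps and fails to lie in the (extended) form domain even for smooth $h>0$ and an interval $A$. Your lattice-built substitutes $h[(f-t)_+\wedge s]=(hf-th)_+\wedge(sh)$ satisfy (a) but not (b): their supports are nested (each is supported in $\{f>t\}$, which contains the supports of all higher slabs), so the sign lemma no longer applies to the cross terms and the quadratic-remainder identity loses its sign. On top of this, the limit passage goes the wrong way for you: from $q_e(hf_n)\geq q_e(hf_n^2,h)$ for approximants you need $q_e(hf)\geq\liminf_n q_e(hf_n)$ on the left, and lower semicontinuity gives precisely the opposite inequality; so, contrary to what you write, lower semicontinuity does not control the quadratic term. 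You would need genuine form-topology convergence $hf_n\to hf$ together with a uniform $q_e$-bound on $hf_n^2$ (to pass the bilinear term to the limit via form-weak convergence), and your truncation-and-density scheme produces neither --- note that $h$ is not assumed excessive, so there is no bound of the type $q_e\bigl((hf)\wedge(nh)\bigr)\leq q_e(hf)$.

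The paper escapes this trap by not decomposing inside $q_e$ at all. After a change of measure putting $h,hf,hf^2$ into $D(q)$, the spectral theorem gives $q(hf)-q(hf^2,h)=\lim_\alpha \alpha^2\left(\as{G_\alpha hf^2,h}-\as{G_\alpha hf,hf}\right)$, because the terms $\alpha\as{hf,hf}$ and $\alpha\as{hf^2,h}$ cancel exactly; hence it suffices to prove $\as{G_\alpha hf^2,h}\geq\as{G_\alpha hf,hf}$ for each fixed $\alpha$. This is a statement about the bounded operator $G_\alpha$ on $L^2(X,\mu)$: one may now decompose $f$ into genuine indicator slabs, requiring only $h1_{A_i}\in L^2(X,\mu)$, pass to the limit using nothing but $L^2$-continuity of $G_\alpha$ and dominated convergence, and obtain the sign from positivity preservation of $G_\alpha$ --- the exact mirror of your sign lemma, but in a setting where domain membership is not an issue. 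If you want to rescue your route, run your (correct) combinatorial identity at the level of a form approximation defined on all of $L^2(X,\mu)$, such as $u\mapsto\alpha\as{(I-\alpha G_\alpha)u,u}$, rather than at the level of $q_e$, where the domain constraints you yourself identified are fatal.
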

\begin{proof}
 Without loss of generality we can assume $hf, hf^2 \in D(q)$ (else change the measure $\mu$ to $\mu'$ to make sure that $hf, hf^2 \in  D(q_e) \cap L^2(X,\mu') = D(q')$, where $q'$ is the restriction of $q_e$ to $L^2(X,\mu')$). The spectral theorem implies  
 \begin{align*}
 q(hf) - q(hf^2,h)&=  \lim_{\alpha \to 0+} \alpha \as{(I - \alpha G_\alpha) hf,hf} - \lim_{\alpha \to 0+} \alpha \as{(I - \alpha G_\alpha) hf^2,h}\\ &= \lim_{\alpha \to 0+} \alpha^2 \left( \as{G_\alpha hf^2 ,h} - \as{G_\alpha hf ,hf}\right).
 \end{align*}
 Hence, it suffices to show that $\as{ \alpha G_\alpha hf^2 ,h} - \as{ \alpha G_\alpha hf ,hf}$ is positive. Since $hf,hf^2 \in L^2(X,\mu)$ and since the resolvents are continuous, it suffices to verify positivity for simple functions $f = \sum_i \alpha_i 1_{A_i}$ with pairwise disjoint sets $A_i$ with $h1_{A_i} \in L^2(X,\mu)$. Using the symmetry of $G_\alpha$, for such a simple function we obtain 
 \begin{align*}
  \as{G_\alpha hf^2 ,h} - \as{G_\alpha hf ,hf} &= \sum_{i} \alpha_i^2 \as{G_\alpha h1_{A_i},h} - \sum_{i,j} \alpha_i \alpha_j \as{G_\alpha h1_{A_i},h1_{A_j}}\\
  &= \frac{1}{2} \sum_{i,j} \as{G_\alpha h1_{A_i},h1_{A_j}} (\alpha_i - \alpha_j)^2\\
  &\quad +  \sum_i \alpha_i^2 \as{G_\alpha h1_{A_i},h} - \sum_i \alpha_i^2 \as{G_\alpha h1_{A_i},h1_{\cup_j A_j}}.
 \end{align*}
Since $G_\alpha$ is positivity preserving, the right side of this equation is nonnegative.
\end{proof}

\begin{corollary}[Hardy inequality for perturbed forms]\label{coro:hardy inequality perturbed form}
 Let $q$ be a closed quadratic form on $L^2(X,\mu)$ satisfying the first Beurling-Deny criterion. For all strictly positive $g \in L^2(X,\mu)$ and all $f \in D(q)$  and $\alpha > 0$ we have
 $$q(f) + \alpha \av{f}_2^2 \geq \int_{X} f^2 \frac{g}{G_\alpha g} d\mu.$$
\end{corollary}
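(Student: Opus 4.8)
The plan is to apply the Abstract Hardy Inequality (Proposition~\ref{prop:abstract hardy}) to the form $q_\alpha = q + \alpha\av{\cdot}_2^2$ rather than to $q$ itself, using the excessive function $h = G_\alpha g$ as the distinguished nonnegative element. The key idea is that $G_\alpha g$ is naturally $q_\alpha$-excessive and, more importantly, it satisfies a concrete identity relating $q_\alpha(G_\alpha g, \cdot)$ back to $g$, which is exactly what will turn the abstract inequality $q(hf) \geq q(hf^2,h)$ into the desired concrete pointwise weight $g/G_\alpha g$.

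First I would set $h = G_\alpha g$ and observe that by the defining property of the resolvent, $q(h, v) + \alpha\as{h,v} = \as{g,v}$ for all $v \in D(q)$, i.e. $q_\alpha(h,v) = \as{g,v}$. Then, given $f \in D(q)$, I would introduce the new function $\widetilde f = f/h = f/G_\alpha g$, so that $f = h \widetilde f$ and $f^2/h = h\widetilde f^2$. Applying Proposition~\ref{prop:abstract hardy} to the form $q_\alpha$ (which also satisfies the first Beurling-Deny criterion, as noted in the Remark following Lemma~\ref{lemma:form domain lattice}) with this $h$ and $\widetilde f$ yields
$$q_\alpha(h\widetilde f) \geq q_\alpha(h\widetilde f^2, h).$$
The left-hand side is just $q_\alpha(f) = q(f) + \alpha\av{f}_2^2$. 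For the right-hand side, the resolvent identity $q_\alpha(h, v) = \as{g,v}$ applied to $v = h\widetilde f^2 = f^2/h$ gives
$$q_\alpha(h\widetilde f^2, h) = q_\alpha(h, f^2/h) = \as{g, f^2/h} = \int_X f^2 \frac{g}{G_\alpha g}\, d\mu,$$
which is precisely the claimed bound.

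The main obstacle will be the integrability and domain bookkeeping needed to legitimately apply Proposition~\ref{prop:abstract hardy}: that result requires $hf, hf^2 \in D((q_\alpha)_e)$, so I must verify that $f = h\widetilde f$ and $f^2/h = h\widetilde f^2$ lie in the extended domain and that the pairing $\as{g, f^2/h}$ is finite and correctly identified with the resolvent pairing. Since $g \in L^2$ is strictly positive, $h = G_\alpha g$ is strictly positive by Lemma~\ref{lemma:existence of resolvents}(b)-type reasoning and the strong continuity of the resolvent, so $g/G_\alpha g$ is well-defined. The delicate point is that $f^2/G_\alpha g$ need not lie in $L^2(X,\mu)$ a priori, so the identity $q_\alpha(h, f^2/h) = \as{g, f^2/h}$ must be justified by an approximation argument (truncating $\widetilde f$, or passing through the extended form $(q_\alpha)_e$ and using lower semicontinuity together with Fatou's lemma to pass to the limit). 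If the integral $\int_X f^2 g/G_\alpha g\, d\mu$ is infinite the inequality holds trivially, so it suffices to handle the case where it is finite, which controls the approximation.
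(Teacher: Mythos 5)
Your overall strategy is exactly the paper's: apply Proposition~\ref{prop:abstract hardy} to $q_\alpha = q + \alpha\av{\cdot}_2^2$ with $h = G_\alpha g$, and identify the right-hand side $q_\alpha((f/h)^2h, h) = \as{g, f^2/h}$ via the defining equation of the resolvent. However, there is a genuine gap at the point you yourself flag as the ``main obstacle'': you never actually produce an argument that $h\widetilde f^2 = f^2/G_\alpha g$ lies in the form domain, and the tools you propose (truncation plus lower semicontinuity plus Fatou) do not by themselves yield this. Note that since $q_\alpha$ dominates $\alpha\av{\cdot}_2^2$, the extended domain of $q_\alpha$ is just $D(q)$, so there is no room to escape into $D((q_\alpha)_e)$; one really must show $f^2/h \in D(q)$, and even for bounded $\widetilde f = f/h$ this is not obvious. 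The paper's key device, which is missing from your proposal, is to pass to the $h$-transform $q_{\alpha,h}(u) = q_\alpha(hu)$: because $h$ is strictly positive and $q_\alpha$-excessive, $q_{\alpha,h}$ is a Dirichlet form, and bounded functions in a Dirichlet form domain form an algebra, whence $\widetilde f \in D(q_{\alpha,h}) \cap L^\infty(X,\mu)$ implies $\widetilde f^2 \in D(q_{\alpha,h})$, i.e.\ $h\widetilde f^2 \in D(q)$. This algebra step is the technical heart of the proof and cannot be replaced by a soft limiting argument.

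A second, related error: your claim that ``if the integral $\int_X f^2 g/G_\alpha g\, d\mu$ is infinite the inequality holds trivially'' is backwards. Since $f \in D(q)$, the left-hand side $q_\alpha(f)$ is finite, so an infinite right-hand side would \emph{falsify} the inequality; the finiteness of the integral is part of the conclusion, not a case one may discard. In the paper this finiteness emerges from the limiting argument: for the truncations $f_n = (f\wedge nh)\vee(-nh)$ one has the inequality with $q_\alpha(f_n) \leq q_\alpha(f)$ (using that $nh$ is $q_\alpha$-excessive), and then the monotone convergence theorem gives $\int_X f^2 \frac{g}{G_\alpha g}\, d\mu = \lim_n \int_X f_n^2 \frac{g}{G_\alpha g}\, d\mu \leq q_\alpha(f) < \infty$. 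To repair your proposal you would need to insert the $h$-transform/algebra argument for the truncated functions and then run this monotone limit, which is precisely the paper's proof.
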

\begin{proof}
 We apply the previous proposition to the closed form $q_\alpha = q + \alpha \av{\cdot}^2$ with $D(q_\alpha) = D(q)$ and $h = G_\alpha g$. For $f \in D(q)$ we have $(f/h) h = f \in D(q)$.  Hence, if also $(f/h)^2 h \in D(q)$, the previous proposition yields 
 \begin{align*}\label{inequ:hardy}
  q_\alpha(f) = q_\alpha((f/h)h) \geq q_\alpha((f/h)^2 h,h) =  q_\alpha((f/G_\alpha g)^2 G_\alpha g,G_\alpha g) = \int_{X} f^2 \frac{g}{G_\alpha g} d\mu.
 \end{align*}

 Next we show $(f/h)^2 h \in D(q)$ for all $f \in D(q) \cap L^\infty_h(X,\mu)$.   As discussed in Lemma~\ref{lemma:existence of resolvents}, the function $h$ is $q_\alpha$ excessive and strictly positive (here we use that $G_\alpha$ is the Green operator of $q_\alpha$).  The form $q_{\alpha,h}\colon L^2(X,\mu) \to [0,\infty]$ defined by $q_{\alpha,h}(f) = q_\alpha (hf)$ is a Dirichlet form. Indeed, $h>0$ implies that $q_{\alpha,h}$ is densely defined. Moreover, $h$ is $q_\alpha$-excessive (see Remark after Lemma~\ref{lemma:characterization of excessive functions}) so that 
 $$q_{\alpha,h}(f\wedge 1) = q_{\alpha}(h(f\wedge 1)) = q_{\alpha}(hf\wedge h) \leq q_\alpha(hf) = q_{\alpha,h}(f)$$
 shows that the constant function $1$ is $q_{\alpha,h}$-excessive.  Then $D(q_{\alpha,h}) \cap L^\infty(X,\mu)$  is an algebra, see e.g. \cite[Theorem~1.4.2]{FOT}.  Since $D(q_{\alpha,h}) = \{f \in L^2(X,\mu) \mid fh \in D(q)\}$, we obtain for  $f \in D(q) \cap L^\infty_h(X,\mu)$ that $f/h \in D(q_{\alpha,h}) \cap L^\infty(X,\mu)$. The algebra property yields $(f/h)^2 \in D(q_{\alpha,h})\cap L^\infty(X,\mu)$, so that $(f/h)^2 h \in D(q)$. 
 
Now let $f \in D(q)$. What we have shown so far yields the desired inequality for the functions  $f_n = (f \wedge nh) \vee (-nh) \in D(q) \cap L^\infty_h(X,\mu).$ Using that $nh$ is strictly positive and $q_\alpha$-excessive shows
 \begin{align*}
  \int_{X} f^2 \frac{g}{G_\alpha g} d\mu &= \lim_{n \to \infty} \int_{X} f_n^2 \frac{g}{G_\alpha g} d\mu \leq  \limsup_{n \to \infty} q_\alpha(f_n) \leq q_\alpha(f). \hfill \qedhere
 \end{align*}
\end{proof}

\section{From weak Hardy inequalities to Hardy inequalities - subcriticality} \label{section:criticality}

 In this section we discuss under which conditions weak Hardy inequalities lead to Hardy inequalities, i.e., when the function $\alpha$ in the weak hardy inequality is bounded. Forms that satisfy a Hardy inequality are called subcritical in the literature. Hence, the content of this section is devoted to characterizing subcriticality for quadratic forms satisfying the first Beurling-Deny criterion. Our arguments will rely  on our weak and abstract Hardy inequalities and their  corollaries.

\begin{definition}[Subcriticality]
A quadratic form $q$ on $L^2(X,\mu)$ is called {\em subcritical} if there exists a strictly positive $w \in L^0(X,\mu)$ such that the following {\em Hardy inequality} holds
$$\int_X |f|^2 w d\mu \leq q(f), \quad f \in D(q).$$
In this case, $w$ is called {\em Hardy weight for }$q$. 
\end{definition}

\begin{remark}
 The definition of $q^+$ and $q_e$ and Fatou's lemma yield that Hardy inequalities as in the last definition extend to $D(q^+)$ and $D(q_e)$. In particular, for subcritical $q$ we have $D(q^+) = D(q_e) \cap L^0_+(X,\mu)$. 
\end{remark}

The aim of this section is to prove the following theorem. 

\begin{theorem}[Characterization subcriticality]\label{theorem:subcriticality}
Let $q$ be a closed quadratic form on $L^2(X,\mu)$ satisfying the first Beurling-Deny criterion. The following assertions are equivalent. 
\begin{enumerate}[(i)]
 \item $q$ is subcritical. %
 \item There exists a strictly positive $g\in L^0(X,\mu)$ such that for all $f \in D(q)$
 $$\int_X |f|g d\mu \leq q(f)^{1/2}.$$
 \item $\ker q^+ = \{0\}$. 
 \item $\ker q_e = \{0\}$ and there exists a strictly positive $q$-excessive function $h \in D(q_e)$.
 \item For one  strictly positive $f \in L^0(X,\mu)$ the limit $Gf = \lim_{\alpha \to 0+} G_\alpha f$ exists in $L^0(X,\mu)$.
  \item For all strictly positive $q$-excessive functions $h \in L^0(X,\mu)$ and all  $f \in L_+^1(X,h \mu)$ the limit $Gf = \lim_{\alpha \to 0+} G_\alpha f$ exists in $L^0(X,\mu)$. 
 \item $\ker q_e = \{0\}$  $q \neq 0$ and the map 
 $$D(q_e) \to D(q_e),\quad f \mapsto |f|$$
 is continuous with respect to $q_e$. 
 \item The embedding $(D(q_e),q_e) \to L^0(X,\mu),\, f \mapsto f$ is continuous. 
 \item $(D(q_e),q_e)$ is a Hilbert space. 
\end{enumerate} 
\end{theorem}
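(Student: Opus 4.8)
The plan is to prove the theorem by a cycle of implications that splits into an \emph{analytic} group (i), (ii), (iv), (v), (vi), built on the Green operator and the Hardy inequalities of this section, and a \emph{structural} group (iii), (vii), (viii), (ix), built on the extended functionals $q_e$ and $q^+$; the two groups are linked by the weak Hardy inequality.

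For the analytic group I would argue as follows. For (i)$\Rightarrow$(ii) write $g=w^{1/2}\psi$ with $\psi\in L^2_+(X,\mu)$ strictly positive and $\av{\psi}_2\le 1$, and apply Cauchy--Schwarz to the Hardy weight $w$. For (ii)$\Rightarrow$(v) first replace $g$ by $g\wedge\varphi$ with $\varphi\in L^2_+(X,\mu)$ strictly positive, so that $g\in L^2_+(X,\mu)$ still satisfies $\int_X|f|g\,d\mu\le q(f)^{1/2}$, and then invoke Lemma~\ref{lemma:existence of resolvents}(b), giving that $Gg$ is strictly positive and lies in $L^1(X,g\mu)$, in particular $Gg\in L^0(X,\mu)$. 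For (v)$\Rightarrow$(i) truncate the given strictly positive $f$ to a strictly positive $g\in L^2_+(X,\mu)$ with $g\le f$, so $Gg\le Gf\in L^0(X,\mu)$; Corollary~\ref{coro:hardy inequality perturbed form} gives $\int_X f^2\, g/(G_\alpha g)\,d\mu\le q(f)+\alpha\av{f}_2^2$, and letting $\alpha\to 0+$ (so that $G_\alpha g\nearrow Gg$) Fatou's lemma produces the Hardy weight $w=g/Gg>0$. Here the finiteness $\as{g,Gg}=\int_{[0,\infty)}\lambda^{-1}\,d\sigma_g(\lambda)<\infty$ coming from Lemma~\ref{lemma:existence of resolvents}(b) shows that $(G_\alpha g)$ is $q$-Cauchy, so $Gg\in D(q_e)$ is a strictly positive excessive function; combined with $\ker q_e=\{0\}$ this yields (iv), while (iv)$\Rightarrow$(vi)$\Rightarrow$(v) I would extract from the contraction property of $\alpha G_\alpha$ on $L^1(X,h\mu)$ for excessive $h$ noted after Lemma~\ref{lemma:characterization of excessive functions}.

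For the structural group I would lean on the abstract results. The implication (iii)$\Rightarrow$(viii) is the remark following Lemma~\ref{lemma:continuity q^+}: if $q_e(f_n)\to 0$, pass to a subsequence with $\sum_k q_e(f_{n_k})<\infty$, note $\sum_k q^+(|f_{n_k}|)\le\sum_k q_e(f_{n_k})<\infty$ by the first Beurling--Deny criterion, and conclude $\limsup_k|f_{n_k}|\in\ker q^+=\{0\}$, i.e.\ $f_n\to 0$ in $L^0(X,\mu)$. Conversely (viii)$\Rightarrow$(iii): an element of $\ker q^+$ is an $L^+(X,\mu)$-limit of a $q_e$-null sequence, which by (viii) also tends to $0$ in $L^0(X,\mu)$, so matching a.e.\ limits along a common subsequence forces it to vanish. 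The equivalence (viii)$\Leftrightarrow$(ix) is Theorem~\ref{theorem:2 imply the third} applied to the closed form $q_e$ on $L^0(X,\mu)$, after noting that each of (viii), (ix) already forces $\ker q_e=\{0\}$. Finally (iii)$\Leftrightarrow$(vii) follows from Ancona's Lemma~\ref{lemma:ancona}: once the embedding is continuous the $q_e$-topology agrees with the form topology on $D(q_e)$, so continuity of $f\mapsto|f|$ in the form topology upgrades to continuity with respect to $q_e$; the converse reduces, via $q_e(|f_n|)\to 0$, to the a.e.-subsequence argument above.

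It remains to connect the two groups. The easy direction is (i)$\Rightarrow$(iii): the Hardy inequality extends to $q^+$ by Fatou, so $u\in\ker q^+$ gives $\int_X u^2 w\,d\mu\le q^+(u)=0$, whence $u=0$ since $w>0$. The hard part, and the real content of the section title, is the return (iii)$\Rightarrow$(i), equivalently (ix)$\Rightarrow$(v): one must manufacture a Hardy weight out of mere continuity of the embedding. My plan is to use $\ker q_e=\{0\}$ (which (iii) provides, since $\ker q_e\subseteq\ker q^+$) to invoke the weak Hardy inequality of Corollary~\ref{coro:weak hardy extended form}, $\int_X f^2 w\,d\mu\le\alpha(r)q_e(f)+r\aV{f/h}_\infty^2$ for all $f\in D(q_e)$, with $h,w$ strictly positive and $w\in L^1(X,(1+h^2)\mu)$, and then to run a direct-method argument on the sets $\{f\in D(q_e):\aV{f/h}_\infty\le K\}$: these are $q_e$-closed, and continuity of the embedding together with dominated convergence (domination by $Kh\in L^2(X,w\mu)$) rules out a $q_e$-null minimizing sequence with $\int_X f^2 w\,d\mu=1$, so the constrained Hardy constant is strictly positive. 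I expect the main obstacle to lie precisely in passing from these \emph{constrained} estimates to an unconstrained Hardy inequality: since the constraint $\aV{f/h}_\infty\le K$ is not scale invariant, the error term in the weak Hardy inequality must be absorbed by truncating against the excessive function via $q_e(f\wedge h)\le q_e(f)$ and letting $K\to\infty$. The cleanest route is probably to feed the resulting bound back through Corollary~\ref{coro:hardy inequality perturbed form} to verify (v) directly, i.e.\ to show that $G_\alpha g$ converges in $L^0(X,\mu)$ for a suitable strictly positive $g$; Lemma~\ref{lemma:kernel q^+} then guarantees that no mass of $Gg$ escapes to $\infty$ on a set of positive measure, as that would produce a nonzero element of $\ker q^+$.
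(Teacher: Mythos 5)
Much of your architecture is sound and close to the paper's: the analytic cycle (i)$\Rightarrow$(ii)$\Rightarrow$(v)$\Rightarrow$(i) (via Lemma~\ref{lemma:existence of resolvents}(b) and Corollary~\ref{coro:hardy inequality perturbed form}) is correct, as are (iii)$\Rightarrow$(viii) via Lemma~\ref{lemma:continuity q^+}, (viii)$\Leftrightarrow$(ix) via Theorem~\ref{theorem:2 imply the third}, and (viii)$\Rightarrow$(vii) via Lemma~\ref{lemma:ancona}; your (viii)$\Rightarrow$(iii) by matching a.e.\ limits is even a small shortcut compared with the paper, which reaches (iii) from (iv). However, three steps contain genuine gaps.

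First, (iv)$\Rightarrow$(vi) does not follow from the contraction property of $\alpha G_\alpha$ on $L^1(X,h\mu)$: that property holds for \emph{every} form admitting a strictly positive excessive function, in particular for critical (recurrent) ones, where $Gf=\infty$ a.e.\ for every nontrivial $f\geq 0$; the contraction only gives $\int_X G_\alpha f\, h\, d\mu \leq \alpha^{-1}\int_X f h\, d\mu$, which degenerates as $\alpha\to 0+$. Triviality of $\ker q_e$ must enter, and in the paper it does: Lemma~\ref{lemma:extension of resolvent to extended space} gives $q_e(\alpha G_\alpha h)\to 0$, assertion (viii) upgrades this to $\alpha G_\alpha h\to 0$ in $L^0(X,\mu)$, and then the exhaustion $A_\alpha=\{\alpha G_\alpha h\leq h/2\}\nearrow X$ together with the resolvent identity yields $\int_{A_\alpha} G_\beta f\, h\, d\mu\leq \alpha^{-1}\int_X fh\, d\mu$ uniformly in $\beta$, whence $Gf<\infty$ a.e. Second, your (vii)$\Rightarrow$(iii) is circular: from $q_e(|f_n|)\to 0$, Lemma~\ref{lemma:continuity q^+} only gives $\limsup_k|f_{n_k}|\in\ker q^+$, and concluding that this element vanishes is precisely assertion (iii); the matching-limits variant instead presupposes (viii). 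The paper's proof of (vii)$\Rightarrow$(viii) needs the weak Hardy inequality of Corollary~\ref{coro:weak hardy extended form} applied to the truncations $|f_n|\wedge h$, and this ingredient is what your argument omits at this spot.

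Third, and most importantly, the bridge from the structural group back to subcriticality is incomplete, as you yourself flag. Two ideas are missing. (a) From (viii)\,\&\,(ix) one must construct a strictly positive \emph{excessive} function $h\in D(q_e)$; the paper does this by minimizing $q_e$ over the closed convex set $\{f\in D(q_e)\mid f\geq g\}$ in the Hilbert space $(D(q_e),q_e)$, the minimizer satisfying $q_e(h,\varphi)\geq 0$ for all nonnegative $\varphi\in D(q_e)$ and hence being excessive by Lemma~\ref{lemma:characterization of excessive functions}. Your direct method on the sets $\{f\mid \aV{f/h}_\infty\leq K\}$ with an arbitrary strictly positive $h$ cannot produce excessiveness, yet excessiveness is indispensable for your own absorption step $q_e(f\wedge h)\leq q_e(f)$. (b) Once (iv) is available, one still needs the resolvent-identity argument described above to obtain (vi)/(v). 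Your fallback through Lemma~\ref{lemma:kernel q^+} stalls at the same point: to apply it to $Gg$ you would need $q^+(Gg)<\infty$, i.e.\ a bound on $q(G_\alpha g)\leq \as{g,G_\alpha g}$ uniform in $\alpha$, and such a bound is essentially assertion (ii) --- circular again.
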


\begin{proof} {\em Step 1:} We first discuss the equivalence of (iii), (iv), (vii), (viii) and (ix). 

 (viii) $\Leftrightarrow$ (ix): This is the content of Theorem~\ref{theorem:2 imply the third}.

 (viii) \& (ix) $\Rightarrow$ (iv):  Let $g\in D(q_e)$ with $g> 0$ be given.  By (viii) the set $\{f \in D(q_e) \mid f \geq g\}$ is a closed nonempty convex set in the Hilbert space  $(D(q_e),q_e)$. Hence, by the approximation theorem in Hilbert spaces, there exists $h \in D(q_e)$ with
 $$q_e(h) = \inf \{q_e(f) \mid f \geq g\}.$$
 For nonnegative $\varphi \in D(q_e)$ and $\varepsilon > 0$ we have $h + \varepsilon \varphi \geq g$ such that 
 $$q_e(h) \leq q_e(h + \varepsilon \varphi) = q_e(h) + 2\varepsilon q_e(h,\varphi) + \varepsilon^2 q_e(\varphi).$$
 Letting $\varepsilon \to 0+$ yields $q_e(h,\varphi) \geq 0$, so that $h$ is $q_e$-excessive by Lemma~\ref{lemma:characterization of excessive functions} (choose a measure $\mu' = \varphi \mu$ such that $h \in L^2(X,\mu')$ and the result follows from Lemma~\ref{lemma:characterization of excessive functions} applied to the restriction of $q_e$ to $L^2(X,\mu')$).
 
 Since $L^0(X,\mu)$ is Hausdorff, the continuity of the embedding $D(q_e) \to L^0(X,\mu)$ implies that $(D(q_e),q_e)$ is Hausdorff as well. This means $\ker q_e = \{0\}$.

%
  (vii) $\Rightarrow$  (viii): Let  $h \in D(q_e)$ be strictly positive.  Since $\ker q_e =\{0\}$, Corollary~\ref{coro:weak hardy extended form} yields a function $\alpha \colon (0,\infty) \to (0,\infty)$ and $w > 0$ such that 
 $$\int_X (|f|\wedge h)^2w  d\mu \leq \alpha(r)q_e(|f| \wedge h) + r  $$
 for all $r> 0$ and $f \in D(q_e)$. Let  $(f_n)$ in $D(q_e)$ with $q_e(f_n) \to 0$ be given. The continuity of $\lvert\cdot\rvert$ with respect to $q_e$ implies that the map  $D(q_e) \to D(q_e), \,f \mapsto  |f|\wedge h$ is also continuous with respect to $q_e$. We obtain $q_e(|f_n| \wedge h) \to 0$ and our weak Hardy inequality implies
 $$\int_X (|f_n|\wedge h)^2 w  d\mu \to 0.$$
This yields $f_n \to 0$ in $L^0(X,\mu)$ and we arrive at (viii).  
 
(viii) $\Rightarrow$ (vii): Assertion (viii) implies that the form topology is induced by the norm $q_e^{1/2}$. With this at hand, the assertion follows from Lemma~\ref{lemma:ancona}.
 
(iv) $\Rightarrow$ (iii): Let $f \in \ker q^+$ and let $f_n \in D(q_e)$ with $f_n \to f$ and $q_e(f_n) \to 0$.  Let $h \in D(q_e)$ be excessive and strictly positive.  Using $f_n \wedge h \to f \wedge h$ in $L^0(X,\mu)$ and the lower semicontinuity of $q_e$  we obtain 
$$q_e(f \wedge h) \leq \liminf_{n\to \infty}  q_e (f_n \wedge h) \leq \liminf_{n\to \infty}  q_e (f_n) = 0.$$
This implies $f \wedge h \in \ker q_e$, so that $f \wedge h = 0$ by our assumption. The strict positivity of $h$ yields $f = 0$.

(iii) $\Rightarrow$ (viii): Let $(f_n)$ be in $D(q_e)$ with $q_e(f_n) \to 0$. Since $(f_n)$ is an arbitrary sequence with this property, it suffices to show that $(f_n)$ has a subsequence converging to $0$  $\mu$-a.e. Using the first Beurling-Deny criterion, we choose a subsequence $(f_{n_k})$ such that 
$$\sum_{k = 1}^\infty q^+(|f_{n_k}|) = \sum_{k = 1}^\infty q_e(|f_{n_k}|) < \infty.$$
With this and $\ker q^+ = \{0\}$ at hand, $|f_{n_k}| \to 0$ $\mu$-a.e. follows from Lemma~\ref{lemma:continuity q^+}. 

%
%
%
%

{\em Step 2:} Now that we established the equivalence of  (iii), (iv), (vii), (viii) and (ix), we show that all of these are equivalent to (ii), (v) and (vi).

(vi) $\Rightarrow$ (v): This is trivial. 

(v) $\Rightarrow$ (ii):   Without loss of generality we can assume that $f$ is strictly positive and in $L^1(X,\mu) \cap L^2(X,\mu)$. Consider the function $F = f/(Gf \vee 1)$. Then $F \leq f$ and $F \leq f/Gf$, and so the monotonicity of $G$ yields $\int_X F GF d\mu \leq \av{f}_1.$ This implies 
$$q_\alpha(G_\alpha F) = \as{F,G_\alpha F} \leq \av{f}_1. $$
For $g \in D(q)$ we obtain %
$$\as{|g|,F} = q_\alpha(|g|,G_\alpha F) \leq q_\alpha(|g|)^{1/2}q_\alpha(G_\alpha F)^{1/2} \leq \av{f}_1^{1/2}  q_\alpha(g)^{1/2}. $$
Letting $\alpha \to 0+$ yields (ii). 

(ii) $\Rightarrow$ (iv): If (ii) holds we  have $\ker q_e = \{0\}$ (since the inequality extends to the extended space, see the remark before this theorem). The existence of a strictly positive excessive function und (ii) is the content of Lemma~\ref{lemma:existence of resolvents}~(b).

(iv) $\Rightarrow$ (vi): Let $h\in L^0(X,\mu)$ be striclty positive and $q$-excessive and let $f \in L^1_+(X,h\mu)$. Without loss of generality we can assume $h \in D(q_e)$ (else consider $h \wedge h'$ for a strictly positive excessive function $h' \in D(q_e)$). Lemma~\ref{lemma:extension of resolvent to extended space} and (viii) imply $\alpha G_\alpha h \to 0$ in $L^0(X,\mu)$ as $\alpha \to 0+$. The resolvent identity and $\alpha G_\alpha h \leq h$, see Lemma~\ref{lemma:characterization of excessive functions},  show  that this convergence is monotone and that 
$$G_\beta (h - \alpha G_\alpha h) = G_\alpha h - \beta G_\alpha G_\beta h \leq G_\alpha h \leq \alpha^{-1}h.$$

Let $A_\alpha = \{\alpha G_\alpha h \leq h/2\}$ and note that $A_\alpha \nearrow X$ as $\alpha \to 0+$ by our previous considerations. Hence, it suffices to show that $Gf$ is a.s. finite on $A_\alpha$.  Using symmetry of the extended resolvents and the inequality above, for  $\beta > 0$ we estimate
\begin{align*}
 \int_{A_\alpha} G_\beta f h d\mu  \leq 2 \int_{X} f G_\beta  (h - \alpha G_\alpha h) d\mu = \frac{1}{\alpha}  \int_{X} fh d\mu.
\end{align*}
Hence, we obtain  $1_{A_{\alpha}} Gf \in L^1(X,h\mu)$ and arrive at (ii).

{\em Step 3:} Assertions (ii) to (ix) are equivalent to (i):

 (i) $\Rightarrow$ (ii): Without loss of generality we can assume that the Hardy weight $w$ is in $L^1(X,\mu)$. For $f \in D(q)$ we obtain
$$\int_X |f| w d\mu \leq \left(\int_X |f|^2 w d\mu \right)^{1/2} \left(\int_X   w d\mu \right)^{1/2} \leq C q(f)^{1/2}. $$
This yields the claim. 

(v) $\Rightarrow$ (i): Without loss of generality we can assume $Gg \in L^0(X,\mu)$ for some strictly positive $g \in L^2(X,\mu)$. Then Corollary~\ref{coro:hardy inequality perturbed form} yields the claim for the Hardy weight $w = g/Gg$ after letting $\alpha \to 0+$.  
\end{proof}

\begin{remark}[Hardy weights]
 The proof of the theorem shows that for all strictly positive $g \in L^0(X,\mu)$ with $Gg \in L^0(X,\mu)$ the function $w = g/Gg$ is  a Hardy weight for $q$. A criterion for the existence of $Gg$ is given by assertion (iv).
\end{remark}

\begin{remark}[State of the art]
 \begin{enumerate}[(a)] 
\item Let $\cE$ be a Dirichlet form. In this case, the constant function $1$ is $\cE$-excessive and assertion (iii) reduces to $\ker \cE_e = \{0\}$.   This is one characterization of transience of the Dirichlet form $\cE$ and the equivalence of transience to (ii), (iii)  (iv), (v), (vi) and (ix) is well-known, see the discussion in \cite[Section~1.6 and Notes]{FOT}.  The equivalence of transience to subcriticality with Hardy weight $w = g/Gg$ is more or less contained in \cite{Fit} for regular Dirichlet forms. The additional regularity assumption in \cite{Fit} allows that the measure $w\mu$ on one side of the Hardy inequality can even be replaced by a smooth measure. The equivalence of transience to  (viii) is based on Theorem~\ref{theorem:2 imply the third} and taken from \cite{Schmi3}.

\item For second-order linear elliptic operators the relation of Hardy inequalities and the behavior of the resolvent at the infimum of the spectrum (which in our case can always be taken to equal $0$), i.e., the equivalence of (i) and (v), (vi) is well known, see e.g.  \cite{Mur,Pin,Pins} and references therein.

The connection of subcriticality to transience of  $h$-transformed  Schrödinger type forms (Dirichlet form + form induced by a potential) is  studied in the recent \cite{TU}, with previous results in \cite{Tak1,Tak2}. For Schrödinger type forms on discrete spaces corresponding results were obtained in \cite{KPP}.

\item The implication  (viii) \& (ix) $\Rightarrow$ (iv) uses a standard argument showing that certain minimizers of the 'energy' $q$ are 'superharmonic' functions. The idea to use the function $f/(Gf \vee 1)$ in the proof of implication (v) $\Rightarrow$ (ii) is taken from the proof of \cite[Theorem~1.5.1]{FOT}.

\item The equivalence of subcriticality to (iii) and (vii) seems to be a new observation. 
\end{enumerate}
\end{remark}

With this at hand we  obtain that there are three types of irreducible forms satisfying the first Beurling-Deny criterion.

\begin{corollary}\label{corollary:trichotomy} 
 Let $q$ be an irreducible quadratic form on $L^2(X,\mu)$ satisfying the first Beurling-Deny criterion. Then precisely one the following assertions holds.
 \begin{enumerate}[(i)]
  \item $q$ is subcritical. 
  \item $\ker q_e = \R h$ for some strictly positive $h \in L^0(X,\mu)$.
  \item $\ker q^+ = \{0,\infty\}$.
 \end{enumerate}
Moreover, (iii) holds if and only if $q$ does not possess a nontrivial nonnegative excessive function. 
\end{corollary}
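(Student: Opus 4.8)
The plan is to reduce everything to the structure of $\ker q^+$ and to play the three cases off against $\ker q_e$. By Theorem~\ref{theorem:subcriticality}, subcriticality is equivalent to $\ker q^+ = \{0\}$, so (i) is exactly the case $\ker q^+ = \{0\}$; this equivalence is the pivot of the whole argument. First I would record mutual exclusivity. If $\ker q_e = \R h$ with $h>0$, then $h$ is a nontrivial, finite-valued element of $\ker q^+$, so neither $\ker q^+ = \{0\}$ nor $\ker q^+ = \{0,\infty\}$ can hold; and $\{0\}\neq\{0,\infty\}$. Hence (i), (ii), (iii) are pairwise incompatible.

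For exhaustiveness I would assume $q$ is not subcritical, i.e. $\ker q^+\neq\{0\}$, and split according to $\ker q_e$. If $\ker q_e\neq\{0\}$, then Corollary~\ref{coro:perron frobenius} gives $\ker q_e=\R h$ with $h$ strictly positive, which is exactly (ii). The substantial case is $\ker q_e=\{0\}$, where I claim $\ker q^+=\{0,\infty\}$ with $\infty=\infty\cdot 1_X$. The key points are: (a) any $f\in\ker q^+$ that is finite $\mu$-a.e. lies in $D(q_e)\cap L^0_+(X,\mu)$ with $q_e(f)=0$ (using $D(q^+)\cap L^0_+(X,\mu)=D(q_e)\cap L^0_+(X,\mu)$ and that $q^+=q_e$ there), hence $f\in\ker q_e=\{0\}$; so a \emph{nonzero} $f\in\ker q^+$ must satisfy $\mu(\{f=\infty\})>0$. (b) By Lemma~\ref{lemma:kernel q^+}, $E:=\{f=\infty\}$ gives $\infty\cdot 1_E\in\ker q^+$. (c) I would then show $E$ is $q$-invariant; irreducibility forces $\mu(X\setminus E)=0$, so $f=\infty\cdot 1_X$. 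Applied to every nonzero element (one exists since $\ker q^+\neq\{0\}$), this yields $\ker q^+=\{0,\infty\}$, i.e. (iii).

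The main work, and the step I expect to be the obstacle, is the invariance in (c): $\infty\cdot 1_E$ is not a genuine $L^0$-valued excessive function, so the truncation lemmas do not apply verbatim. The device I would use is an independent-scaling trick. From $\infty\cdot 1_E\in\ker q^+$ and the lattice inequality for $q^+$, for nonnegative $g\in D(q_e)$ one has $q^+(g\wedge\infty 1_E)=q^+(1_E g)\le q^+(g)=q_e(g)$, so $1_E g\in D(q_e)$ with $q_e(1_E g)\le q_e(g)$. Now take arbitrary nonnegative $u\in D(q_e)$ supported in $E$ and $v\in D(q_e)$ supported in $E^c$; applying this contraction to $g=tu+v$ for $t\ge 0$, whose $1_E$-part is $tu$, gives $0\le 2t\,q_e(u,v)+q_e(v)$ for all $t\ge 0$, forcing $q_e(u,v)\ge 0$. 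Since $q$ satisfies the first Beurling--Deny criterion, $q_e(u,v)\le 0$ for nonnegative $u,v$ with disjoint supports, whence $q_e(u,v)=0$. Splitting a general $g$ into positive and negative parts then yields $q_e(1_E g,1_{E^c}g)=0$ and hence $q_e(1_E g)\le q_e(g)$ for all $g$, i.e. $E$ is $q_e$-invariant, equivalently $q$-invariant.

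Finally I would prove the moreover. Since the trichotomy is established, (iii) holds precisely when $\ker q_e=\{0\}$ and $q$ is not subcritical (in case (iii), $\ker q_e=\{0\}$ because any element of $\ker q_e$ is finite a.e. and lies in $\ker q^+=\{0,\infty\}$, and $\ker q_e$ is closed under taking absolute values). If a nontrivial nonnegative excessive function $h$ existed, irreducibility would make it strictly positive, and since $\infty\cdot 1_X\in\ker q^+$ in case (iii), excessivity gives $q^+(h)=q^+(\infty 1_X\wedge h)\le q^+(\infty 1_X)=0$; thus $h$ is a finite-a.e. element of $\ker q^+$, so $h\in\ker q_e=\{0\}$, contradicting nontriviality. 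Conversely, if no nontrivial nonnegative excessive function exists, then $q$ is not subcritical (subcriticality supplies a strictly positive excessive function by Theorem~\ref{theorem:subcriticality}(iv)) and $\ker q_e=\{0\}$ (any nonzero element of $\ker q_e$ is excessive by Lemma~\ref{lemma:form domain lattice}), so by the established trichotomy we are in case (iii).
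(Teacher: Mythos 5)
Your proof is correct and follows essentially the same route as the paper's: it identifies (i) with $\ker q^+=\{0\}$ via Theorem~\ref{theorem:subcriticality}, invokes Corollary~\ref{coro:perron frobenius} when $\ker q_e\neq\{0\}$, proves $q_e$-invariance of $\{f=\infty\}$ from Lemma~\ref{lemma:kernel q^+} together with the lattice inequality for $q^+$, and handles the ``moreover'' part with the same excessive-function argument $q^+(h)=q^+(\infty\wedge h)\le q^+(\infty)=0$. The only difference is cosmetic, in the invariance step: where the paper bounds the cross term $q_e(1_Af_+,1_Af_-)$ from below by $q_e(f_+,f_-)$ via a four-term expansion, you show the mixed terms vanish outright with the scaling trick $g=tu+v$, $t\to\infty$ --- an equally valid variant resting on the same two ingredients (the contraction on nonnegative functions and non-positivity of $q_e$ on disjointly supported nonnegative pairs).
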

\begin{proof}
 Suppose (i) does not hold. According to our theorem $\ker q^+ \neq \{0\}$. Now there are two cases:
 
 Case~1: $\ker q^+ \cap L^0(X,\mu) \neq \{0\}$. Since $D(q_e) \cap L^0_+(X,\mu) = D(q^+) \cap L^0_+(X,\mu)$, this implies $\ker q_e \neq \emptyset$. Now (ii) follows from Corollary~\ref{coro:perron frobenius}. Since $\ker q_e \cap L^0_+(X,\mu) \subseteq \ker q^+$, this implies that (iii) does not hold.
 
 Case~2: $\ker q^+ \cap L^0(X,\mu) = \{0\}$. Let $0 \neq h \in \ker q^+$. Since $h \not\in L^0(X,\mu)$, the set $A = \{h = \infty\}$ has positive measure. We show that it is $q_e$-invariant as this implies (iii) by the irreducibilty of $q$. 
 
According to Lemma~\ref{lemma:kernel q^+}, we have  $ 1_A \cdot \infty \in \ker q^+$. For nonnegative $f \in D(q_e)$ we obtain 
$$q_{e}(1_{A} f ) = q_e (f \wedge (1_A \cdot \infty)) = q^+(f \wedge (1_A \cdot \infty)) \leq q^+(f) + q^+( 1_A \cdot \infty) = q_e(f). $$
Since $q_e$ satisfies the first Beurling-Deny criterion, this implies $f 1_A\in D(q_e)$ for all $f \in D(q_e)$ and 
\begin{align*}
 q_{e}(1_Af ) &= q_e(1_{A} f_+ ) +  q_e(1_{A}f_- ) -  2 q_e(1_A f_+ ,1_A f_-)\\
 &\leq q_e(f_+) + q_e(f_-) - 2 q_e(1_A f_+, 1_A f_-).
\end{align*}
It remains to prove $q_e(1_A f_+ ,1_A f_- ) \geq q_e(f_+,f_-)$ to establish the invariance of $A$. Nonnegative $h,h' \in D(q_e)$ with $h \wedge h' = 0$ satisfy $q_e(h,h') \leq 0$ (this is a consequence of Lemma~\ref{lemma:form domain lattice}). Hence,  
\begin{align*}
 q_e(f_+,f_-) &= q_e(1_{A} f_+,1_{A}f_-) + q_e(1_{A} f_+,1_{X \setminus A} f_-) + q_e(1_{X \setminus A} f_+, 1_A f_-) + q_e(1_{A} f_+, 1_{X \setminus A} f_-)\\
 & \leq  q_e(1_{A} f_+,  1_A f_-). 
\end{align*}

It remains to prove the 'Moreover' statement. If (iii) does not hold, either (i) or (ii) are satisfied. If (i) holds, then $q$ has an excessive function by the previous theorem and if (ii) holds, $h$ is an excessive function.

Now suppose $q$ has a nontrivial  excessive function $h \in L^0_+(X,\mu)$. If $\infty$ were  in $\ker q^+$, we would obtain 
$$q^+(h) = q^+(\infty \wedge h) \leq q^+(\infty) = 0.$$
This implies $\ker q_+ \neq \{0,\infty\}$, so that (iii) does not hold.
\end{proof}

\begin{definition}[Criticality]
 An irreducible form satisfying (ii) in the previous corollary is called {\em critical} and the strictly positive function $h$ is an {\em Agmon ground state} of $q$.
\end{definition}

\begin{remark}  For irreducible Dirichlet forms the dichotomy between subcriticality (transience) and criticality (recurrence) is well-known, see \cite[Section~1.6]{FOT}. It is also known for classical Schrödinger operators \cite{PT}, certain generalized Schrödinger forms \cite{Tak1} and discrete Schrödinger operators \cite{KPP}. 
 
 For general irreducible forms however, it may happen happen that they are neither critical nor subcritical. According to the corollary, this is precisely the case if they do not possess  excessive functions. 
 
 In concrete models the existence of an excessive function is often known. Indeed, we do not have a counterexample of a closed form satisfiying the first Beurling-Deny criterion without excessive function. Excessive functions usually correspond to superharmonic functions with respect to an associated 'weakly defined operator'.   Existence results for such functions are sometimes referred to as Allegretto-Piepenbrink type theorems, see e.g.  \cite{Sim84,LSV,HK} and references therein. Below in Appendix~\ref{section:appendix} we prove existence of excessive functions for irreducible forms for which the semigroup admits a heat kernel, an assumption that is satisfied for the models considered in the mentioned \cite{PT,Tak1,KPP}.  Our existence result relies on a weak Harnack principle, which is shown to hold for kernel operators with strictly positive kernel.
\end{remark}

%

\section{Weak Poincaré inequalities and completeness of extended form domains} \label{section:weak poincare}
In this section we discuss when weak Poincaré inequalities hold. For conservative Dirichlet forms on $L^2(X,\mu)$ with finite $\mu$, they have been introduced and extensively studied in \cite{RW}, which also contains an abundance of examples and further references. Therefore, here we restrict ourselves to two additional abstract criteria for the validity of such an inequality.

For simplicity we assume irreducibility of the form even though this is not necessary. Moreover, we write $f \perp_w h$ to state that the functions $f$ and $h$ are orthogonal in the Hilbert space $L^2(X,w\mu)$.

\begin{theorem}\label{theorem:completeness extended space critical case}
 Let $q$ be an irreducible closed quadratic form on $L^2(X,m)$ satisfying the first Beurling-Deny criterion. Assume further that $q$ is critical such that  $\ker q_e = \R h$ for some strictly positive $h \in L^0(X,\mu)$. The following assertions are equivalent. 
 \begin{enumerate}[(i)]
  \item $(D(q_e)/\R h, q_e)$ is a Hilbert space. 
  \item The map 
  $$(D(q_e)/\R h, q_e) \to L^0(X,\mu)/\R h, \quad f + \R h \mapsto f  +  \R h $$
  is continuous. Here, $L^0(X,\mu)/\R h$ is equipped with the quotient topology.
  \item For one/all strictly positive $w \in L^1(X,h^2\mu)$ there exists a decreasing  $\alpha \colon (0,\infty) \to (0,\infty)$ such that for all $f \in L^2(X,w\mu)$ with $f \perp_w h$ and all $r > 0$ we have
  $$\int_X f^2 w d\mu \leq \alpha(r) q_e(f) +r \av{f/h}_\infty^2. $$
  %
 \end{enumerate}
\end{theorem}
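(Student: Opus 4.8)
The plan is to dispatch the equivalence (i) $\Leftrightarrow$ (ii) by the abstract machinery already available and then treat separately the two implications linking these to the analytic statement (iii). The first observation is that, since $h \in \ker q_e$, Cauchy--Schwarz gives $q_e(f,h) = 0$ for every $f \in D(q_e)$, hence $q_e(f - ch) = q_e(f)$ for all $c \in \R$; thus $q_e$ descends to a genuine norm on $D(q_e)/\R h$ and the quotient seminorm is $q_e$ itself. As $q_e$ is closed (Proposition~\ref{prop:existence of extended form}) and $\ker q_e = \R h$ is closed in $L^0(X,\mu)$, Theorem~\ref{theorem:2 imply the third}, applied with $p = 0$ to $q_e$ and its kernel $\R h$, yields at once that (i) and (ii) are equivalent. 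It then remains to show (ii) $\Rightarrow$ (iii) for every admissible $w$ and (iii) for a single $w$ $\Rightarrow$ (ii); the two bracketed versions of (iii) are squeezed in between, making all assertions equivalent.

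For (ii) $\Rightarrow$ (iii) I would argue by contradiction in the spirit of Theorems~\ref{theorem:very weak} and \ref{theorem:weak hardy}. Fix a strictly positive $w \in L^1(X,h^2\mu)$ and note $h \in L^2(X,w\mu)$ and $h^2 w \in L^1(X,\mu)$ because $\int_X h^2 w\, d\mu < \infty$. If the inequality failed for some $r > 0$, homogeneity would produce $f_n \perp_w h$ with $\int_X f_n^2 w\, d\mu = 1$, $q_e(f_n) \to 0$ and $|f_n| \le r^{-1/2} h$. By (ii) there are $c_n \in \R$ with $f_n - c_n h \to 0$ in $L^0(X,\mu)$; the bound $|f_n| \le r^{-1/2}h$ forces $(c_n)$ to be bounded, so after passing to a subsequence $c_n \to c$ and $f_n \to ch$ $\mu$-a.e. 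Dominated convergence with majorant $r^{-1/2} h^2 w$ together with $\int_X f_n h w\, d\mu = 0$ gives $c \int_X h^2 w\, d\mu = 0$, whence $c = 0$; but then $\int_X f_n^2 w\, d\mu \to 0$, contradicting $\int_X f_n^2 w\, d\mu = 1$.

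The implication (iii) for one $w$ $\Rightarrow$ (ii) is the heart of the matter and is where I expect the main difficulty. I must show that $q_e(f_n) \to 0$ forces $f_n + \R h \to 0$ in $L^0(X,\mu)/\R h$. The natural first move is truncation: set $g_n^{k} = (f_n \wedge kh) \vee (-kh)$, so that $|g_n^k| \le kh \in L^2(X,w\mu)$ and, since $kh \in \ker q_e$ is $q_e$-excessive (Lemma~\ref{lemma:form domain lattice}), $q_e(g_n^k) \le q_e(f_n) \to 0$. For fixed $k$ one subtracts the $L^2(X,w\mu)$-component of $g_n^k$ along $h$, obtaining $u_n^k \perp_w h$ with $q_e(u_n^k) = q_e(g_n^k)$ and $\av{u_n^k/h}_\infty \le 2k$; feeding $u_n^k$ into (iii), applying Chebyshev's inequality and then letting $r \to 0+$ shows that $g_n^k$ converges to $\R h$ in $L^0(X,\mu)$ as $n \to \infty$, for each fixed $k$.

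The remaining --- and genuinely delicate --- point is to pass from these level-$k$ truncations to $f_n$ itself, that is, to control the tails $f_n - g_n^k$ uniformly in $n$. The key structural fact is that these tails, being $(f_n - kh)_+$ and $(-f_n - kh)_+$, again have vanishing energy: writing $(f_n - kh)_+ = (f_n \vee kh) - kh$ and using $q_e(kh) = 0$ together with the lattice inequality of Lemma~\ref{lemma:form domain lattice} gives $q_e((f_n - kh)_+) \le q_e(f_n) \to 0$. I would combine this with the trichotomy description $\ker q^+ = \{\lambda h \mid \lambda \in [0,\infty]\}$ valid in the irreducible critical case (Corollary~\ref{corollary:trichotomy}) and the continuity property of Lemma~\ref{lemma:continuity q^+}: passing to a subsequence with summable energies, the $\liminf$ and $\limsup$ of the relevant nonnegative tail functions land in $\{\lambda h\}$, which confines the tails to multiples of $h$ and lets one absorb them into the shift modulo $\R h$. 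Conceptually this whole implication is the ground-state transform $u \mapsto hu$ underlying the picture of \cite{RW}: it turns $q_e$ into (the extension of) a conservative Dirichlet form and $w$ into the finite measure $h^2 w\mu$, under which (iii) becomes precisely a weak Poincaré inequality of the type studied there. The obstacle I anticipate is making the coupling between the truncation level $k$ and the weak-Poincaré parameter $r$ uniform, which is exactly what the energy-smallness of the tails together with Lemma~\ref{lemma:continuity q^+} is meant to overcome. Once $L^0(X,\mu)/\R h$-convergence is secured, (ii) follows, and by the first step so does (i).
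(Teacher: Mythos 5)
Your handling of (i) $\Leftrightarrow$ (ii) via Theorem~\ref{theorem:2 imply the third} and your proof of (ii) $\Rightarrow$ (iii) are correct and essentially identical to the paper's. The gap sits in (iii) $\Rightarrow$ (ii), exactly at the point you yourself flag as delicate, and your proposed repair does not close it. Your strategy is \emph{truncate, then shift}: you form $g_n^k = (f_n \wedge kh)\vee(-kh)$ and subtract its $L^2(X,w\mu)$-projection onto $h$, so your candidate shifts $c_n^k$ are confined to $[-k,k]$. But assertion (ii) requires shifts that may be unbounded in $n$. Take $f_n = nh + \eps_n$ with $q_e(\eps_n) \to 0$ and $\eps_n \to 0$ in $L^0(X,\mu)$: then $q_e(f_n) = q_e(\eps_n) \to 0$, the correct shifts are $c_n \approx n$, and for every fixed $k$ one has $g_n^k \to kh$ a.e., i.e. the limiting constant saturates the truncation level ($c^k = k$) --- precisely the degenerate case in which ``$g_n^k + \R h \to 0$ in $L^0/\R h$'' carries no information about $f_n + \R h$. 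Your fallback --- that the tails $(f_n - kh)_+$ have vanishing energy, so their $\liminf$/$\limsup$ lie in $\ker q^+$ by Lemma~\ref{lemma:continuity q^+} and Corollary~\ref{corollary:trichotomy} --- does not rescue this: in the drifting scenario those $\liminf$/$\limsup$ equal $\infty$, which is indeed an element of $\ker q^+$ but cannot be ``absorbed into the shift modulo $\R h$''; and even when they are finite multiples of $h$, knowing separately that $\liminf$ and $\limsup$ are of the form $\lambda h$ neither controls the oscillation of the tails nor produces constants $c_n$ with $f_n - c_n h \to 0$ in $L^0(X,\mu)$.

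The paper's proof avoids the problem by reversing the order of operations: \emph{shift first, truncate second}, and only one truncation level is needed. With $Tf = (f\wedge h)\vee(-h)$, for each $n$ one chooses $C_n$ so that $T(f_n - C_n h) \perp_w h$; such $C_n$ exists because $C \mapsto \int_X T(f_n - Ch)\, h w \, d\mu$ is continuous (dominated convergence, majorant $h^2 w \in L^1(X,\mu)$), non-increasing, and tends to $\mp \int_X h^2 w\, d\mu$ as $C \to \pm\infty$. Since $h \in \ker q_e$ is $q_e$-excessive, $q_e(T(f_n - C_n h)) \leq q_e(f_n - C_n h) = q_e(f_n)$, and since $\av{T(f_n - C_n h)/h}_\infty \leq 1$, assertion (iii) yields $\int_X (|f_n - C_n h| \wedge h)^2 w \, d\mu \leq \alpha(r) q_e(f_n) + r$ for every $r > 0$, so this integral tends to $0$. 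Because $w > 0$ and $h > 0$, this already gives $f_n - C_n h \to 0$ in $L^0(X,\mu)$: convergence in $L^0$ is local convergence in measure, which is detected by the single truncation $|\cdot|\wedge h$, so no tail analysis (and no appeal to $q^+$) is required. If you re-run your level-$k$ computation with the truncation applied to the \emph{already shifted} functions $f_n - C_n h$, where $C_n$ is defined by the orthogonality condition above rather than by projecting a truncation, your argument becomes the paper's proof.
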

\begin{proof}
 The equivalence of (i) and (ii) is the content of Theorem~\ref{theorem:2 imply the third}.
 
 (iii) $\Rightarrow$ (ii): It suffices to show that if $(f_n)$ is a sequence in $D(q_e)$ with $q_e(f_n) \to 0$, then there exist $C_n \in \R$ such that $f_n - C_n h \to 0$ in $L^0(X,\mu)$. Let $w \in L^1(X,h^2\mu)$ be  strictly positive  such that the weak Poincaré inequality holds and consider
 $$T \colon L^0(X,\mu) \to L^2(X,w\mu), \quad Tf = (f \wedge h) \vee(-h).$$ 
 For each $n \in \N$ there exists $C_n \in \R$ such that $T(f_n-C_n h) \perp_w h$. Since also $|Tf| = |f| \wedge h$, the weak Poincaré inequality applied to $T(f_n-C_n h)$  and $h \in \ker q_e$ imply 
 $$\int_X (|f_n - C_n h| \wedge h)^2 w d\mu \leq \alpha(r) q_e(T(f_n-C_n h)) + r \leq \alpha(r) q_e(f_n) + r, \quad r >0.$$
 Since $h,w$ are strictly positive, we infer $f_n - C_n h \to 0$ in $L^0(X,\mu)$ from $q_e(f_n) \to 0$.
 
 (ii) $\Rightarrow$ (iii): Suppose that the weak Poincaré inequality does not hold for a given strictly positive $w \in L^1(X,h^2\mu)$. Then there exist $r > 0$ and a sequence $(f_n)$ in $D(q_e) \cap L^\infty_h(X,\mu)$ with $f_n \perp_w h$ such that $\int f_n^2 w d\mu = 1$ and 
 $$1 > n q_e(f_n) + r^2 \av{f_n/h}_\infty^2.$$
 This implies $|f_n| \leq h/r$ and $q_e(f_n) \to 0$. Assertion (ii) yields $C_n \in \R$ such that $f_n  - C_n h \to 0$ in $L^0(X,\mu)$. Without loss of generality we assume the convergence holds $\mu$-a.e. (else pass to a suitable subsequence). It suffices to show  $C_n\to 0$ and hence $f_n \to 0$ $\mu$-a.e. Indeed, using $|f_n| \leq h/r$ and $h \in L^2(X,w\mu)$,  Lebesgue's dominated convergence theorem and $f_n \to 0$ $\mu$-a.e. imply
 $$0 = \lim_{n \to \infty} \int_X f_n^2 w d\mu = 1,$$
 a contradiction.
 
 From $h > 0$ and 
 $$|C_n h| \leq |C_n h - f_n| + |f_n| \leq |C_n h - f_n| + h/r \to h/r, \quad n \to \infty,$$
 it follows that $C_n$ is bounded. Hence,  $f_n - C_n h$ is bounded by some constant times $h$.  Using this and $f_n \perp_w h$, we conclude with the help of  Lebesgue's dominated convergence theorem
 $$-C_n \int_X h^2 w d\mu = \int_X (f_n - C_n h) h w d\mu \to 0, \quad n \to \infty.$$
 This shows the required $C_n \to 0$. 
\end{proof}

\begin{remark}
\begin{enumerate}[(a)]
 \item Instead of working with the norm $\av{\cdot/h}_\infty$ on the right side of the inequality in (iii), we could have also used $\delta_h(f) = \esssup f/h - \essinf f/h$. Note that $\delta_h$ is the quotient norm on $L^\infty_h(X,\mu)/\R h$. 
 \item It is  known for recurrent (critical) irreducible Dirichlet forms $\cE$ that Poincaré type inequalites yield that $(D(\cE_e)/\R 1,\cE_e)$ is a Hilbert space, see e.g. the discussion in \cite[Section~4.8]{FOT}, which is based on \cite{Osh} and treats Harris recurrent Dirichlet forms.  The observation that the converse can be characterized by a weak Poincaré inequality seems to be new.  
 
  Recall Theorem~\ref{theorem:subcriticality}, which states that if $(D(q_e),q_e)$ is a Hilbert space (and in particular $\ker q_e = \{0\}$), not only weak Hardy inequalites but Hardy inequalites hold with respect to certain Hardy weights. It would be interesting to know whether Poincaré inequalites with respect to certain weights hold under the condition that $(D(q_e)/\ker q_e,q_e)$ is a Hilbert space or what else has to be assumed. As mentioned above, for Dirichlet forms Harris recurrence is sufficient for a Poincaré inequality to hold. 
  
  In some sense  a Poincaré inequality for $q$ can be interpreted as subcriticality of the form $q$ considered on the quotient space $L^2(X,\mu)/\R h \simeq (\R h)^\perp$ (or $q_e$ on the quotient $L^0(X,\mu)/\R h$). These quotients do not carry a good order structure. Hence, the methods used in the proof of Theorem~\ref{theorem:subcriticality}, which heavily rely on the order structure of the function spaces, are not available.
%
%
\end{enumerate}
\end{remark}

For an irreducible conservative Dirichlet form $\cE$ and finite  $\mu$, in  \cite{RW} it is noted that the validity of a weak Poincaré inequality (with $h = w = 1$) is equivalent to Kusuoka-Aida's weak spectral gap property discussed in \cite{Aid,Kus}. The {\em weak spectral gap property} is said to hold if   sequences $(f_n)$ in $D(\cE)$ with $\av{f_n}_2 \leq 1$, $f_n \perp 1$ and $\cE(f_n) \to 0$ converge to $0$ in measure with respect to $\mu$. In this sense, our main observation is that the weak spectral gap property is the same as the continuity of the embedding in assertion (ii). As shown in \cite[Lemma~2.6]{Aid}, the weak spectral gap property holds if the semigroup is a semigroup of kernel operators. Hence, we obtain the following corollary.

\begin{corollary}
Let $\cE$ be an irreducible conservative Dirichlet form on $L^2(X,\mu)$ with finite  $\mu$. If the associated semigroup admits an integral kernel, then $(D(\cE_e)/\R 1,\cE_e)$ is a Hilbert space. 
\end{corollary}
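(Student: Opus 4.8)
The plan is to read this corollary as the special case $h = w = 1$ of Theorem~\ref{theorem:completeness extended space critical case}, with the analytic substance supplied by two external inputs: the equivalence between the weak Poincaré inequality and the weak spectral gap property recorded in \cite{RW}, and Aida's observation that kernel semigroups enjoy the latter. First I would verify that the hypotheses of that theorem hold with $h = 1$. Since $\mu$ is finite, $1 \in L^2(X,\mu)$, and conservativeness means $T_t 1 = 1$ for all $t > 0$; hence $1 \in \ker \cE$, and therefore $1 \in \ker \cE_e$. As $\cE$ is irreducible and $1 > 0$, Corollary~\ref{coro:perron frobenius} applied to $\cE_e$ forces $\ker \cE_e = \R 1$, so that $\cE$ is critical with Agmon ground state $1$ and the theorem applies.

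By the equivalence of (i) and (iii) in Theorem~\ref{theorem:completeness extended space critical case}, the desired Hilbert space property for $(D(\cE_e)/\R 1, \cE_e)$ follows as soon as I produce a single strictly positive $w \in L^1(X,h^2\mu) = L^1(X,\mu)$ admitting a weak Poincaré inequality. Finiteness of $\mu$ makes $w = 1$ admissible, and with $h = w = 1$ the inequality to be verified reads
$$\int_X f^2 \, d\mu \le \alpha(r)\,\cE_e(f) + r\, \av{f}_\infty^2, \quad f \perp 1,\ r > 0,$$
which is exactly the weak Poincaré inequality of \cite{RW} in its classical normalization.

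It remains to supply such an $\alpha$. As explained in the discussion preceding the corollary, this weak Poincaré inequality is equivalent to the Kusuoka--Aida weak spectral gap property of \cite{Aid, Kus}; and by \cite[Lemma~2.6]{Aid} the weak spectral gap property holds for any semigroup of kernel operators, which is precisely our hypothesis. Tracing this back through the equivalence of (i) and (iii) yields the claim.

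The only genuinely nontrivial ingredient here is external, namely the equivalence between the weak Poincaré inequality and the weak spectral gap property, which I would simply cite from \cite{RW}. The single point worth a word of care is that these two are phrased with different notions of convergence, but since $\mu$ is finite, local convergence in measure coincides with convergence in measure, so the formulations match; I therefore expect no independent analytic obstacle, the corollary being a clean concatenation of Theorem~\ref{theorem:completeness extended space critical case}, the cited equivalence, and Aida's lemma.
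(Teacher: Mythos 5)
Your proposal is correct and follows essentially the same route as the paper: the paper also deduces the corollary from Theorem~\ref{theorem:completeness extended space critical case} combined with the equivalence (noted in \cite{RW}) of the weak Poincaré inequality with $h=w=1$ to the Kusuoka--Aida weak spectral gap property, and Aida's \cite[Lemma~2.6]{Aid} giving that property for kernel semigroups. Your write-up merely makes explicit the verification (conservativeness plus finiteness of $\mu$ gives $1 \in \ker\cE_e$, and irreducibility with Corollary~\ref{coro:perron frobenius} gives $\ker\cE_e = \R 1$) that the paper leaves implicit.
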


  \cite[Theorem~7.1]{RW} gives a sharp criterion for the weak Poincaré inequality for a conservative irreducible Dirichlet form on a configuration space over a non-compact manifold $M$ (with a finite measure $\mu$ on the configuration space, $h = w = 1$ and with respect to the quotient norm $\delta(f) = \esssup f - \essinf f$ on the right side of the weak Poincaré inequality). In particular, it shows that if 
  $$\lambda(r) = \inf \{\av{\nabla f}_{\vec{L}^2(M)} \mid f \in C_c^\infty(M) \text{ with }  \av{f}_{L^2(M)} = 1 \text{ and } \av{f}_\infty^2 \leq r \} = 0 $$
 for some $r > 0$, then the considered Dirichlet form on the configuration space over $M$ does not satisfy a weak Poincaré inequality. But for $M = \R$ it is readily verified that $\lambda(r) = 0$ for any $r > 0$ (for $n \in \N$ consider smoothed  versions of $f_n \colon \R \to \R,\, f_n = -|x|/n^2 + 1/n$). Hence, on $M = \R$ the Dirichlet forms on configuration space over $\R$ described in \cite[Section~7]{RW} do not satisfy weak Poincaré inequalities and according to  our theorem their extended Dirichlet space is not complete. The existence of such examples seems to be a new observation and therefore we state it as a corollary to our theorem. 
 
 \begin{corollary}\label{coro:non complete}
  There exists an irreducible Dirichlet form $\cE$ with $\ker \cE_e = \R 1$, such that $(D(\cE_e)/\R 1,\Ee)$ is not a Hilbert space.
 \end{corollary}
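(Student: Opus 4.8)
The strategy is to exhibit the example directly, namely the family of Dirichlet forms on the configuration space over $M = \R$ described in \cite[Section~7]{RW}, and then invoke the equivalence (i)~$\Leftrightarrow$~(iii) of Theorem~\ref{theorem:completeness extended space critical case} with $h = w = 1$. These forms are irreducible, conservative, and defined over a finite (probability) measure $\mu$ on the configuration space. Conservativeness gives $T_t 1 = 1$ for all $t$, whence $1 \in D(\cE)$ with $\cE(1) = 0$ (here finiteness of $\mu$ is used to ensure $1 \in L^2$); thus $\R 1 \subseteq \ker \cE_e$, and irreducibility together with Corollary~\ref{coro:perron frobenius} (applied to $\cE_e$, which is irreducible iff $\cE$ is) forces $\ker \cE_e = \R 1$. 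Hence $\cE$ is critical and Theorem~\ref{theorem:completeness extended space critical case} applies. It therefore suffices to show that such a form does \emph{not} satisfy the weak Poincaré inequality of assertion~(iii) with $w = 1$ (which lies in $L^1(X,\mu)$ precisely because $\mu$ is finite), for this is the negation of assertion~(i), i.e. of the statement that $(D(\cE_e)/\R 1,\cE_e)$ is a Hilbert space.

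The failure of the weak Poincaré inequality is reduced, via \cite[Theorem~7.1]{RW}, to a spectral computation on the base manifold: if $\lambda(r) = 0$ for some $r > 0$, where $\lambda(r) = \inf\{\av{\nabla f}_{L^2(M)} \mid f \in C_c^\infty(M),\ \av{f}_{L^2(M)} = 1,\ \av{f}_\infty^2 \le r\}$, then the associated configuration-space form admits no weak Poincaré inequality. Thus the whole argument comes down to verifying $\lambda(r) = 0$ for every $r > 0$ when $M = \R$. For this I would use normalized tent functions: set $f_n(x) = (1/n - |x|/n^2)_+$, mollify slightly so that $f_n \in C_c^\infty(\R)$, and put $g_n = f_n/\av{f_n}_{L^2}$. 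A direct computation gives $\av{g_n}_{L^2} = 1$, while $\av{g_n}_\infty^2 = O(1/n)$ and $\av{\nabla g_n}_{L^2}^2 = O(1/n^2)$, both tending to $0$. Consequently, for every fixed $r > 0$ and all large $n$ one has $\av{g_n}_\infty^2 \le r$ together with $\av{\nabla g_n}_{L^2} \to 0$, which yields $\lambda(r) = 0$ for all $r > 0$.

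The computation itself is elementary; the only delicate point is the bookkeeping needed to transfer \cite[Theorem~7.1]{RW} into our setting. One must verify that the weak Poincaré inequality appearing there --- stated for configuration-space forms over $M$ with the quotient norm $\delta(f) = \esssup f - \essinf f$ on the right-hand side --- coincides, for $h = w = 1$, with assertion~(iii) of Theorem~\ref{theorem:completeness extended space critical case}. The Remark following that theorem, which identifies $\delta_h$ as the quotient norm on $L^\infty_h(X,\mu)/\R h$, is exactly what reconciles the two formulations (here $\delta_1 = \delta$ and $\av{\cdot/h}_\infty = \av{\cdot}_\infty$). One should also confirm that the specific forms in \cite[Section~7]{RW} genuinely meet the hypotheses of both results (closedness, the first Beurling-Deny criterion, irreducibility, conservativeness, and finiteness of $\mu$). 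Once these identifications are in place and $\lambda(r) = 0$ is established, the failure of the weak Poincaré inequality, and hence of assertion~(i), follows immediately.
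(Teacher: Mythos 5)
Your proposal is correct and takes essentially the same route as the paper: the paper likewise takes the configuration-space Dirichlet forms over $M = \R$ from \cite[Section~7]{RW}, invokes \cite[Theorem~7.1]{RW}, verifies $\lambda(r) = 0$ for all $r > 0$ using the same (smoothed) tent functions $f_n = (1/n - |x|/n^2)_+$, and concludes via Theorem~\ref{theorem:completeness extended space critical case}. The only difference is that you make explicit some details the paper leaves implicit, namely that $\ker \cE_e = \R 1$ (via conservativeness, finiteness of $\mu$ and Corollary~\ref{coro:perron frobenius}) and that the quotient norm $\delta$ of \cite{RW} is comparable to $\av{\cdot}_\infty$ on functions orthogonal to constants, which is exactly the point addressed in the remark following Theorem~\ref{theorem:completeness extended space critical case}.
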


%
%
%

\appendix

\section{Existence of excessive functions and a question of Schep}\label{section:appendix}
In this appendix we show that an irreducible form with the first Beurling-Deny criterion has a strictly positive excessive function if its semigroup is a semigroup of kernel operators. This result is based on a more general observation for positivity preserving kernel operators on $L^p$-spaces, which answers a question of Schep for this particular class of operators.

%
%

In this section $(Y,\nu)$ denotes another $\sigma$-finite measure space. We assume $1 < p < \infty$ and consider positivity preserving operators $T \colon L^p(X,\mu) \to L^p(Y,\nu)$, which are automatically continuous, see e.g. \cite[Proposition~1.3.5]{MN}. Their adjoint is denoted by $T^* \colon L^q(Y,\nu) \to L^q(X,\mu)$ with $1/q + 1/p  = 1$. Note that if $f \in L_+^p(X,\mu)$, then $f^{p-1} \in L^q(X,\mu)$. In particular, if $T$ is positivity preserving, $T^*(Tf)^{p-1}$ is well-defined for $f \in L^p_+(X,\mu)$ and belongs to $L^q(X,\mu)$. 

We consider the quantity
$$\lambda(T) = \inf \{\lambda \geq 0 \mid \text{ ex.  strictly positive } f\in L_+^p(X,\mu) \text{ with }T^* (T f)^{p-1} \leq \lambda f^{p-1}\}. $$
It turns out that $\lambda(T) = \av{T}^p$, see \cite[Theorem~4 and Theorem~8]{Schep}, but we shall not use this fact. 

By definition, for $\lambda  > \lambda(T)$ there exists a strictly positive $f \in L^p_+(X,\mu)$ with $T^* (T f)^{p-1} \leq \lambda f^{p-1}$. Schep asked in \cite[Section~8]{Schep}  what happens at  $\lambda = \lambda(T)$. In this case, one cannot expect to find a corresponding $f \in L_+^p(X,\mu)$, but can still hope for a strictly positive $f \in L^0_+(X,\mu)$. The following theorem shows that this is indeed true if $T$ is a kernel operator with strictly positive integral kernel.   

\begin{theorem}\label{theorem:main}
 Let $1 < p < \infty$ and let $T \colon L^p(X,\mu) \to L^p(Y,\nu)$ be a kernel operator with strictly positive integral kernel. Then there exists a strictly positive  $h \in L^0(X,\mu)$ such that
$$T^* (T h)^{p-1} \leq \lambda(T) h^{p-1}.$$
\end{theorem}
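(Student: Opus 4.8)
The plan is to realise $h$ as a limit of strictly positive \emph{supersolutions} at levels decreasing to $\lambda(T)$, keeping the limit nondegenerate by a weak Harnack principle and passing the defining inequality to the critical level by Fatou's lemma for positivity preserving operators (Subsection~\ref{subsection:extension of positivity preserving operators}). Write $\lambda_0 = \lambda(T)$ and let $k$ denote the integral kernel. First I would record that $\lambda_0 > 0$: pairing the inequality $T^*(Tf)^{p-1} \le \lambda f^{p-1}$ with the witness $f$ gives $\av{Tf}_p^p = \as{(Tf)^{p-1},Tf} = \as{T^*(Tf)^{p-1},f} \le \lambda \av{f}_p^p$, and since $f > 0$ and $k > 0$ force $Tf > 0$, every admissible $\lambda$ is strictly positive. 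Then pick $\lambda_n \downarrow \lambda_0$ and, by definition of $\lambda(T)$, strictly positive $f_n \in L^p_+(X,\mu)$ with $T^*(Tf_n)^{p-1} \le \lambda_n f_n^{p-1}$.

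Next I fix a reference set $B_0 \subseteq X$ of finite positive measure, normalise $\int_{B_0} f_n\,d\mu = 1$ (legitimate by homogeneity), and prove the key weak Harnack estimate. From $\lambda_n f_n^{p-1} \ge T^*(Tf_n)^{p-1}$ one reads off, for a.e.\ $x$, that $f_n(x)^{p-1} \ge \lambda_n^{-1}\int_Y k(y,x)(Tf_n(y))^{p-1}\,d\nu(y)$. Restricting the outer integral to a set $S \subseteq Y$ and the inner integral defining $Tf_n$ to a set $B \subseteq X$ on which $k$ is bounded below by a positive constant — such sets exhaust $X$ and $Y$ because $k > 0$ a.e.\ and the spaces are $\sigma$-finite — yields, for every finite-measure $A \subseteq X$, a bound $\essinf_{A} f_n \ge c(A,B)\int_{B} f_n\,d\mu$ with $c(A,B) > 0$ independent of $n$, since $\lambda_n$ stays bounded. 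Taking $B = B_0$ gives uniform strictly positive lower bounds $f_n \ge c_m > 0$ on each member $X_m$ of an exhaustion of $X$; exchanging the roles of $B_0$ and $X_m$ and using $\essinf_{B_0} f_n \le \frac{1}{\mu(B_0)}\int_{B_0} f_n\,d\mu = \frac{1}{\mu(B_0)}$ gives uniform bounds $\int_{X_m} f_n\,d\mu \le C_m$. Thus the $f_n$ neither vanish nor escape to $+\infty$ in measure.

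Granting (this is the delicate point) a subsequence $f_n \to h$ in $L^0(X,\mu)$, the two families of bounds force $h \ge c_m > 0$ on each $X_m$ and $h < \infty$ $\mu$-a.e., so $h$ is a strictly positive element of $L^0(X,\mu)$. To finish I pass to the limit by Fatou's lemma for the positivity preserving operators $T$ and $T^*$. Along an a.e.-convergent subsequence, $Th \le \liminf_n Tf_n$, hence $(Th)^{p-1} \le \liminf_n (Tf_n)^{p-1}$ by monotonicity of $t \mapsto t^{p-1}$, and a second application gives $T^*(Th)^{p-1} \le \liminf_n T^*(Tf_n)^{p-1} \le \liminf_n \lambda_n f_n^{p-1} = \lambda_0 h^{p-1}$, which is the assertion.

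The main obstacle is precisely the extraction step: uniform lower bounds together with uniform local $L^1$-bounds guarantee tightness but not convergence in $L^0$, since an oscillating sequence can satisfy both, so additional structure is needed to exclude oscillation. I expect to resolve this by selecting the supersolutions \emph{monotonically} rather than arbitrarily. The map $\Psi_\lambda(f) = \bigl(\lambda^{-1}T^*(Tf)^{p-1}\bigr)^{1/(p-1)}$ is order preserving and positively homogeneous, so a pointwise infimum of supersolutions is again a supersolution; hence for each $\lambda > \lambda_0$ there is a minimal supersolution $h_\lambda$ dominating $\1_{B_0}$ (nonempty since a scaled $L^p$-supersolution dominates $\1_{B_0}$, its essential infimum over $B_0$ being positive by the Harnack bound), and these minimal supersolutions decrease as $\lambda$ increases. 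Letting $\lambda \downarrow \lambda_0$ then produces a monotone increasing pointwise limit $h \in L^+(X,\mu)$, for which convergence is automatic and the Fatou step applies verbatim. The residual difficulty now migrates to showing this monotone limit is finite a.e.: strict positivity of $k$ forces the dichotomy that $h$ is either a.e.\ finite or a.e.\ equal to $+\infty$, and the genuine work of the proof is to exclude the infinite alternative, for instance by exploiting the reverse weak Harnack bound, which controls $\int_{X_m} h_\lambda\,d\mu$ by a multiple of $\essinf_{B_0} h_\lambda$ with constant independent of $\lambda$. Controlling this potential blow-up at the critical level is where I expect the real obstacle to lie.
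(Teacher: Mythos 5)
Your overall strategy --- supersolutions at levels $\lambda_n \downarrow \lambda(T)$, a weak Harnack estimate to normalize them, and Fatou's lemma for positivity preserving operators to pass to the critical level --- is exactly the paper's. But your proof of the Harnack estimate rests on a false claim: you assert that, since $k > 0$ a.e.\ and the spaces are $\sigma$-finite, there exist positive-measure sets $S \subseteq Y$ and $B \subseteq X$ on which $k$ is bounded below by a positive constant, and that such sets exhaust $X$ and $Y$. Strict positivity of a measurable kernel does \emph{not} imply that $k$ is essentially bounded away from $0$ on any rectangle $S \times B$ of positive-measure sets; the paper points this out explicitly in the remark following its lemma on $\tilde k$, citing \cite[Theorem~B.1]{HS} for a counterexample. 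This is precisely the difficulty its proof is designed to circumvent: it passes to the symmetrized kernel $\tilde k(x,y) = \int_Y k(z,x)k(z,y)\,d\nu(z)$ (the kernel relevant to $f \mapsto T^*(Tf)^{p-1}$) and proves, via a Fubini argument on the sets $D_n = \{k \geq 1/n\}$ and their sections, that $\tilde k \geq c > 0$ on $A \times A$ for \emph{some} positive-measure set $A$ --- a rectangle lower bound that does hold for $\tilde k$ even though it may fail for $k$ itself. The weak Harnack bound $\int_A f\,d\mu \leq D\,\essinf_A f$ for supersolutions then follows using Minkowski's inequality for integrals. Without this (or some substitute), your estimate $\essinf_A f_n \geq c(A,B)\int_B f_n\,d\mu$ is unsupported and the argument collapses at its first real step.

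The second gap is one you flag yourself: your monotone minimal-supersolution scheme leaves the finiteness of the limit at the critical level unresolved. Here you are missing the idea that makes the paper's limit passage work without any compactness or monotonicity: take $h = \liminf_n C_n f_n$ directly, with $C_n$ chosen so that $\essinf_B C_n f_n = 1$. Fatou for positivity preserving operators gives the supersolution inequality for $h$; the normalization gives $h \geq 1$ on $B$, so $h \not\equiv 0$; Fatou combined with the Harnack bound (whose constant stays bounded as $\lambda_n \downarrow \lambda(T)$) gives $\int_A h\,d\mu \leq D$, so $h \not\equiv \infty$; and an ergodicity lemma for positivity improving operators shows that any supersolution in $L^+(X,\mu)$ is a.e.\ equal to $0$, a.e.\ equal to $\infty$, or satisfies $0 < h < \infty$ a.e. You mention this trichotomy, but only for your monotone limit; applied to the liminf of the normalized sequence, it makes the oscillation problem you call ``the delicate point'' disappear entirely, and simultaneously settles the blow-up question your monotone variant leaves open.
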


This theorem and our existence result for excessive functions are a consequence of the following lemma, whose proof we give at the end of this section.  

\begin{lemma} \label{lemma:main excessive}
  Let $1 < p < \infty$ and let $T \colon L^p(X,\mu) \to L^p(Y,\nu)$ be a kernel operator with strictly positive integral kernel. Let $\lambda \geq 0$ and let $f_n \in L^p(X,\mu)$ be strictly positive with $T^* (T f_n)^{p-1} \leq \lambda  f_n^{p-1}$. Then there exist $C_n > 0$ such that $h = \liminf_{n \to \infty} C_n f_n$ satisfies $0 < h < \infty$  $\mu$-a.e. and 
  $$T^* (T h)^{p-1} \leq \lambda  h^{p-1}.$$
  \end{lemma}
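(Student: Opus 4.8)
The plan is to exploit the homogeneity of the defining inequality together with a Harnack-type estimate coming from the strict positivity of the kernel. The first observation is that the inequality $T^*(Tf)^{p-1}\le \lambda f^{p-1}$ is invariant under scaling: replacing $f$ by $Cf$ with $C>0$ multiplies both sides by $C^{p-1}$. Hence every $g_n:=C_nf_n$ again satisfies $T^*(Tg_n)^{p-1}\le\lambda g_n^{p-1}$, whatever the $C_n>0$, so I am free to normalise. I would fix a reference set $B_0\subseteq X$ with $0<\mu(B_0)<\infty$ and choose $C_n$ so that $\int_{B_0}g_n\,d\mu=1$; this is possible since $f_n>0$ and $f_n\in L^p(X,\mu)\subseteq L^1(B_0,\mu)$. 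Two things then remain to be shown for $h=\liminf_n g_n$: that $h<\infty$ $\mu$-a.e.\ and that $h>0$ $\mu$-a.e. Positivity is the soft part, and I would deduce it once $h\not\equiv 0$ is known: since $h\ge 0$ is not a.e.\ zero and the kernel is strictly positive, $Th>0$ $\nu$-a.e., hence $(Th)^{p-1}>0$ and $T^*((Th)^{p-1})>0$ $\mu$-a.e., so $\lambda h^{p-1}\ge T^*((Th)^{p-1})>0$ forces $h>0$ a.e.

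The engine for both finiteness and nontriviality is a \emph{weak Harnack inequality}: for every set $A$ of positive finite measure there should be constants $0<c(A)\le C(A)<\infty$, independent of $n$, with $\essinf_A g_n\ge c(A)$ and $\int_A g_n\,d\mu\le C(A)$. The mechanism is that the superharmonic inequality couples the value of $g_n$ at one place to an integral of $g_n$ everywhere: starting from $\lambda g_n(x)^{p-1}\ge\int_Y k(y,x)\,(Tg_n(y))^{p-1}\,d\nu(y)$ and bounding $Tg_n(y)\ge\int_{B_0}k(y,x')g_n(x')\,d\mu(x')$, one transfers the normalised mass of $g_n$ on $B_0$ into a uniform pointwise lower bound for $g_n$ on $A$; swapping the roles of the two sets and using the normalisation produces the matching uniform bound on $\int_A g_n$. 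Granting this, Fatou's lemma gives $\int_A h\le\liminf_n\int_A g_n\le C(A)<\infty$, so $h<\infty$ a.e.\ on $A$, while the lower bound gives $h\ge c(A)>0$ on $A$. Exhausting $X$ by countably many such sets (possible by $\sigma$-finiteness) yields $0<h<\infty$ $\mu$-a.e.

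It then remains to pass the inequality itself to the liminf, and here I would use the Fatou property of the positivity-preserving operators $T,T^*$ established in Subsection~\ref{subsection:extension of positivity preserving operators}, the monotonicity of $t\mapsto t^{p-1}$, and the fact that this continuous increasing map commutes with $\liminf$ on $[0,\infty]$. Writing $h=\liminf_n g_n$ and working throughout in $L^+(X,\mu)$, I obtain
\[
T^*\big((Th)^{p-1}\big)\le T^*\Big(\big(\liminf_n Tg_n\big)^{p-1}\Big)=T^*\Big(\liminf_n (Tg_n)^{p-1}\Big)\le\liminf_n T^*\big((Tg_n)^{p-1}\big)\le\liminf_n \lambda g_n^{p-1}=\lambda h^{p-1},
\]
where the first step uses $Th\le\liminf_n Tg_n$ (Fatou for $T$) followed by monotonicity, the middle inequality is Fatou for $T^*$, and the last equality again uses that $t\mapsto t^{p-1}$ commutes with $\liminf$. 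This is exactly the asserted inequality for $h$.

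The hard part will be the weak Harnack inequality of the second paragraph. Its delicacy is that the kernel is only strictly positive \emph{a.e.}, not bounded below: in general there need not exist any product set $S\times C$ of positive measure on which $k\ge\delta>0$, so the uniform lower bound cannot be extracted by a naive localisation of $Tg_n$ and $T^*((Tg_n)^{p-1})$. Establishing the two-sided control uniformly in $n$, using only the integral structure of $T$ and $T^*$ and the normalisation on $B_0$, is the genuine technical core, and it is presumably where the strictly positive kernel is used most essentially; everything else is scaling, Fatou, and the elementary observation that positivity of $h$ is automatic once $h$ is nontrivial.
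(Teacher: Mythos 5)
Your scaffolding is sound: the scaling freedom, the Fatou computation that passes $T^*(Th)^{p-1}\leq\lambda h^{p-1}$ to the liminf (this is exactly the paper's argument), and the observation that strict positivity of the kernel upgrades $h\not\equiv 0$ to $h>0$ $\mu$-a.e.\ are all correct. But the lemma's entire content lies in the estimate you defer, and the form in which you postulate it is not available. You ask for two-sided bounds $\essinf_A g_n\geq c(A)$ and $\int_A g_n\,d\mu\leq C(A)$, uniform in $n$, for \emph{every} set $A$ of positive finite measure, with the normalization taken on an \emph{arbitrary} reference set $B_0$. Since $k$ is only a.e.\ strictly positive, there is in general no product set of positive measure on which $k$ is bounded below (the paper's remark cites \cite{HS} for precisely this), so the transfer of the normalized mass on $B_0$ into a pointwise lower bound on a \emph{prescribed} set $A$ fails for exactly the reason you yourself identify; nothing in the inequality prevents $\essinf_A g_n$ from being $0$ on a given $A$. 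The paper's resolution differs in two essential ways. First, the Harnack estimate is proved on \emph{one special} set only: one shows that the symmetrized kernel $\tilde k(x,y)=\int_Y k(z,x)k(z,y)\,d\nu(z)$ satisfies $\tilde k\geq c>0$ on $A\times A$ for \emph{some} $A$ with $\mu(A)>0$ (via the sets $\{k\geq 1/n\}$ and a Fubini argument on their sections --- a statement true for $\tilde k$ though false for $k$ itself), and then Minkowski's integral inequality gives $\int_A f\,d\mu\leq (\lambda/c)\essinf_A f$ for all admissible $f$. The normalization $\essinf_A C_nf_n=1$ is then taken on this special set, not on an arbitrary $B_0$; with your normalization one would still need a Harnack-type estimate linking $B_0$ to $A$, which is again unavailable. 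Second, the globalization from ``$0<h\leq$ bounded integral on $A$'' to ``$0<h<\infty$ a.e.\ on $X$'' is done not by exhaustion but by an ergodicity/trichotomy lemma: for positivity improving $T$, any $h\in L^+(X,\mu)$ with $T^*(Th)^{p-1}\leq\lambda h^{p-1}$ satisfies $h=0$, $0<h<\infty$, or $h=\infty$ a.e.

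Note that your positivity trick is in effect one half of this trichotomy, and the other half would repair your plan without any exhaustion: if $\mu(\{h=\infty\})>0$, then $Th=\infty$ $\nu$-a.e.\ by strict positivity of the kernel, hence $T^*(Th)^{p-1}=\infty$ $\mu$-a.e., and the inequality forces $h=\infty$ a.e., contradicting $\int_A h\,d\mu\leq D<\infty$ on the special set. So the ``every $A$'' Harnack is not only unproven but unnecessary; what is irreplaceable, and absent from your proposal, is the Harnack estimate on a single set, i.e.\ the construction of $\tilde k$ and the set $A$ with $\tilde k\geq c$ on $A\times A$.
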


\begin{proof}[Proof of Theorem~\ref{theorem:main}]
 Choose a sequence $\lambda_n \searrow \lambda(T)$ and let $f_n \in L^p(X,\mu)$ be strictly positive with  $T^* (T f_n)^{p-1} \leq \lambda_n f_n^{p-1}$. Choose $C_n$ according to Lemma~\ref{lemma:main excessive} and set $h = \liminf_{n \to \infty} C_n f_n$. Then $h$ is striclty positive and in $L^0(X,\mu)$. Fatou's lemma for positivity preserving operators yields
 \begin{align*}
  T^* (T h)^{p-1} &\leq \liminf_{n \to \infty}  T^* (T (C_nf_n))^{p-1} \leq \liminf_{n \to \infty} \lambda_n (C_nf_n)^{p-1} = \lambda(T)h^{p-1}. \hfill \qedhere
 \end{align*}
\end{proof}

\begin{theorem}\label{theorem:existence of excessive functions}
 Let $q$ be an irreducible quadratic form on $L^2(X,\mu)$ satisfying the first Beurling-Deny criterion. Assume that the associated semigroup is a semigroup of kernel operators. Then there exists a strictly positive $q$-excessive function.    
\end{theorem}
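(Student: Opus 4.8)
The plan is to produce the excessive function as a suitably normalized limit of resolvents $G_{\alpha_n}f$ as $\alpha_n\downarrow 0$, using Lemma~\ref{lemma:main excessive} to control the limit and the $q_\alpha$-excessivity of resolvents to force excessivity in the limit.

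First I would record the role of irreducibility. Since $q$ is irreducible, so is the associated self-adjoint positivity preserving semigroup $(T_t)$, and a standard fact for such semigroups is that irreducibility is equivalent to $T_t$ being positivity improving for every $t>0$. As $(T_t)$ consists of kernel operators by assumption, this means that for every $t>0$ the integral kernel $k_t$ of $T_t$ is strictly positive $\mu\otimes\mu$-a.e. In particular, fixing any $t_0>0$, the operator $T:=T_{t_0}$ is a self-adjoint kernel operator with strictly positive kernel, so that Lemma~\ref{lemma:main excessive} is applicable to it with $p=2$ (here $T^*=T$).

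Next I would set up the approximating sequence. Fix a strictly positive $f\in L^2_+(X,\mu)$ and a decreasing sequence $\alpha_n\downarrow 0$, and put $v_n:=G_{\alpha_n}f$. Each $v_n$ lies in $L^2_+(X,\mu)$ and is strictly positive, since the kernel of $G_{\alpha_n}$ is strictly positive and $f>0$. By the remark following Lemma~\ref{lemma:characterization of excessive functions}, $v_n$ is $q_{\alpha_n}$-excessive, which by Lemma~\ref{lemma:characterization of excessive functions} applied to the form $q_{\alpha_n}$ (whose semigroup is $e^{-\alpha_n t}T_t$) means $T_t v_n\le e^{\alpha_n t}v_n$ for all $t>0$. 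Taking $t=2t_0$ gives $T^*(Tv_n)=T_{2t_0}v_n\le e^{2\alpha_n t_0}v_n$, so with the single constant $\lambda:=e^{2\alpha_1 t_0}$ all $v_n$ satisfy the hypothesis $T^*(Tv_n)\le \lambda v_n$ of Lemma~\ref{lemma:main excessive}. That lemma then supplies constants $C_n>0$ such that $h:=\liminf_{n\to\infty}C_n v_n$ satisfies $0<h<\infty$ $\mu$-a.e.; I only need this finiteness and positivity, not the accompanying relation for $T_{2t_0}h$.

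Finally I would pass to the limit to show $h$ is excessive. Using Fatou's lemma for the positivity preserving extension of $T_s$ to $L^+(X,\mu)$, then the bound $T_s v_n\le e^{\alpha_n s}v_n$, and finally $e^{\alpha_n s}\to 1$ (so that $\liminf_n C_n e^{\alpha_n s}v_n=\liminf_n C_n v_n$ pointwise), I obtain for every $s>0$
$$ T_s h \le \liminf_{n\to\infty} C_n\, T_s v_n \le \liminf_{n\to\infty} C_n e^{\alpha_n s} v_n = h. $$
Hence $h\in L^0_+(X,\mu)$ is strictly positive with $T_s h\le h$ for all $s>0$, which by Lemma~\ref{lemma:characterization of excessive functions} means $h$ is a strictly positive $q$-excessive function. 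The main obstacle is the first step: justifying that irreducibility forces the given kernel operators $T_t$ to be positivity improving, equivalently that $k_t>0$ a.e. for all $t>0$. This rests on the analyticity of $t\mapsto\langle T_t 1_A,1_{X\setminus A}\rangle$ together with the characterization of invariant sets, and on the fact that a positivity improving kernel operator has an a.e. strictly positive kernel; these are classical but are the only ingredients not already contained in the preceding sections. Alternatively one may apply Lemma~\ref{lemma:main excessive} to $T=G_{\beta_0}$ instead of $T_{t_0}$, for which strict positivity of the kernel is equivalent to the resolvent being positivity improving and hence directly to irreducibility; the estimate $G_{\beta_0}v_n\le(\beta_0-\alpha_n)^{-1}v_n$ coming from $T_t v_n\le e^{\alpha_n t}v_n$ then gives $G_{\beta_0}^2 v_n\le(\beta_0-\alpha_1)^{-2}v_n$, supplying the required single-$\lambda$ hypothesis.
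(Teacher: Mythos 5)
Your proposal is correct and follows essentially the same route as the paper: take strictly positive resolvents $G_\alpha g$, which are $q_\alpha$-excessive and hence satisfy $T_t G_\alpha g \leq e^{t\alpha} G_\alpha g$, apply Lemma~\ref{lemma:main excessive} with $p=2$ and $T = T_{t_0}$ (so $T^*T = T_{2t_0}$) to extract normalizing constants and a limit $h = \liminf_n C_n G_{\alpha_n} g$ with $0 < h < \infty$, then obtain $T_s h \leq h$ for all $s>0$ by Fatou's lemma and conclude excessivity via Lemma~\ref{lemma:characterization of excessive functions}. The paper likewise asserts without further argument that irreducibility forces the semigroup kernels to be strictly positive (the point you flag as the only external ingredient), so your additional care there, and about the uniform choice of $\lambda$, refines rather than changes the method.
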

\begin{proof}
 Irreducibility yields that the integral kernel of the associated semigroup $(T_t)$ is strictly positive. We choose a strictly positive function $g > 0$. Then $f_\alpha = G_\alpha g$ is strictly positive and $T_t f_\alpha \leq e^{t\alpha} f_\alpha$ (use that $G_\alpha g$ is $q_\alpha$-excessive and that $(e^{-t\alpha} T_t)_{t > 0}$ is the semigroup of $q_\alpha$, then apply Lemma~\ref{lemma:characterization of excessive functions}). According to Lemma~\ref{lemma:main excessive} applied to $p = 2$ and $T_t = (T_{t/2})^*T_{t/2}$, there exist $C_\alpha > 0$ such that $h = \liminf_{\alpha \to 0+} C_\alpha f_\alpha$ is strictly positive and in $L^0(X,\mu)$. Using Fatou's lemma for positivity preserving operators we obtain 
 $$T_t h \leq \liminf_{\alpha \to 0+} T_t C_\alpha f_\alpha \leq \liminf_{\alpha \to 0+}  e^{t\alpha}  C_\alpha f_\alpha = h.$$
 Since this is true for any $t > 0$, Lemma~\ref{lemma:characterization of excessive functions} yields that $h$ is excessive. 
\end{proof}
\begin{remark}
The existence of an integral kernel for the semigroup (the heat kernel) is e.g. guaranteed if $X$ is a separable metric space, $\mu$ is a Borel measure of full support on $X$ and $T_t L^2(X,\mu) \subseteq C(X)$, $t >0$, see e.g. \cite{KLVW}. This property is a question of local regularity for solutions to the corresponding heat equation and satisfied in the situations discussed in \cite{PT,Tak1,KPP}.  Another criterion ensuring the existence of heat kernels is $L^1$-$L^\infty$ ultracontractivity in the case of Dirichlet forms.
\end{remark}

%
%
%

We now prove Lemma~\ref{lemma:main excessive} for operators  $T$ which are {\em positivity improving} ($f \geq 0$ and $f \neq 0$ implies that $Tf$ is strictly positive)  and satisfy a weak Harnack principle.  Then we show that the weak Harnack principle holds for kernel operators with strictly positive kernel. We start with two ergodicity properties for postivity improving operators.

\begin{lemma}[Ergodicity] \label{lemma:ergodicity}
 Let $1 < p < \infty$ and let $T \colon L^p(X,\mu) \to L^p(Y,\nu)$ be a positivity improving operator. Every  measurable $A \subseteq X$ with 
 $$T^* (T 1_A f)^{p-1} \leq 1_A T^* (T f)^{p-1}$$
 for all $f \in L^p_+(X,\mu)$ satisfies $\mu(A) = 0$ or $\mu(X \setminus A) = 0$.
 \end{lemma}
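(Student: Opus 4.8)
The plan is to argue by contradiction: assume that both $A$ and its complement $B = X \setminus A$ have positive measure, and derive a contradiction from the positivity improving property of $T$ combined with the duality between $T$ and $T^*$. The key realization is that one should \emph{not} attempt to prove that $T^*$ is positivity improving (which need not follow for a general positivity improving $T$); instead the argument uses positivity improving of $T$ itself on both sides of the duality pairing.

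First I would extract information from the hypothesis by a careful choice of test function. Using $\sigma$-finiteness, pick $A_0 \subseteq A$ with $0 < \mu(A_0) < \infty$, so that $1_{A_0} \in L^p_+(X,\mu)$. Inserting $f = 1_{A_0}$ into the assumed inequality and using $1_A 1_{A_0} = 1_{A_0}$ gives
$$T^*(T 1_{A_0})^{p-1} \leq 1_A\, T^*(T 1_{A_0})^{p-1}.$$
Writing $h = T^*(T 1_{A_0})^{p-1} \geq 0$, this reads $h \leq 1_A h$, which on $B$ forces $h \leq 0$ and hence $h = 0$. Thus $T^*(T 1_{A_0})^{p-1}$ vanishes $\mu$-a.e.\ on $B$.

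Next I set $g = (T 1_{A_0})^{p-1}$. Since $T$ is positivity improving and $1_{A_0} \neq 0$, the function $T 1_{A_0}$ is strictly positive $\nu$-a.e., so $g > 0$ $\nu$-a.e.; moreover $g \in L^q(Y,\nu)$ because $(p-1)q = p$ yields $\aV{g}_q^q = \aV{T 1_{A_0}}_p^p < \infty$. Now choose $B_0 \subseteq B$ with $0 < \mu(B_0) < \infty$. The vanishing of $T^* g$ on $B$ gives $\int_X (T^* g)\, 1_{B_0}\, d\mu = 0$, while the duality $\langle T^* g, 1_{B_0}\rangle = \langle g, T 1_{B_0}\rangle$ rewrites the left-hand side as $\int_Y g \cdot (T 1_{B_0})\, d\nu$. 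But $T$ positivity improving and $1_{B_0} \neq 0$ make $T 1_{B_0}$ strictly positive $\nu$-a.e., so the integrand $g \cdot T 1_{B_0}$ is strictly positive $\nu$-a.e. on the nontrivial space $(Y,\nu)$, forcing the integral to be strictly positive — contradicting its vanishing. Hence $\mu(A) = 0$ or $\mu(X \setminus A) = 0$.

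The only point requiring genuine care is the conceptual step just described: recognizing that the two separate invocations of positivity improving (once to make $g$ strictly positive on $Y$, once to make $T 1_{B_0}$ strictly positive on $Y$) are exactly what close the argument through the pairing, so that no property of $T^*$ beyond positivity preservation is needed. The remaining ingredients — the integrability $g \in L^q$ via $(p-1)q = p$, the implication $h \leq 1_A h \Rightarrow 1_B h = 0$ for nonnegative $h$, and Hölder's inequality guaranteeing the pairing is finite — are routine.
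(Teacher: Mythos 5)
Your proof is correct, and at bottom it runs on the same mechanism as the paper's: the hypothesis with a test function supported in $A$ localizes $T^*(T\,\cdot\,)^{p-1}$ to $A$, and positivity improving of $T$ fed through the duality pairing rules out a complement of positive measure. But your framing rests on a false premise. You assert that $T^*$ being positivity improving ``need not follow for a general positivity improving $T$''; in fact it always does, and the proof is precisely the pairing computation in your last step: if $0 \leq g \neq 0$ in $L^q(Y,\nu)$ and $T^*g$ vanished on a set $E \subseteq X$ of positive measure, pick $E_0 \subseteq E$ with $0 < \mu(E_0) < \infty$ and compute $0 = \as{T^*g, 1_{E_0}} = \as{g, T1_{E_0}} > 0$, since $T1_{E_0} > 0$ $\nu$-a.e.\ and $g \neq 0$ is nonnegative. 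The paper's proof is exactly the route you decided to avoid: it records that $T^*$ is positivity improving, takes $f \in L^p_+(X,\mu)$ nonzero on $A$ (assuming $\mu(A) > 0$), and reads the hypothesis as $0 < T^*(T 1_A f)^{p-1} \leq 1_A\, T^*(Tf)^{p-1}$, whose right-hand side vanishes off $A$, giving $\mu(X \setminus A) = 0$ directly. So your argument is the paper's argument with the adjoint fact inlined and rephrased as a contradiction; nothing is wrong with that, but the claimed ``key realization'' is not a realization at all --- the detour buys you nothing except avoiding a one-line lemma that is true.
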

\begin{proof}
If $T$ is positivity improving, then $T^*$ is also positivity improving. If $\mu(A) \neq 0$, we can choose $f \in L^p_+(X,\mu)$ with $f \neq 0$ on $A$. The positivity improving property for $T$ and $T^*$ and the inequality for $A$ yields  
$$0 < T^* (T 1_A f)^{p-1} \leq 1_A T^* (T f)^{p-1}.$$
Hence, $\mu(X \setminus A) = 0$.
\end{proof}

\begin{lemma} \label{lemma:postivity excessive}
 Let $1 < p < \infty$ and let $T \colon L^p(X,\mu) \to L^p(Y,\nu)$ be positivity improving. Then $\lambda(T) > 0$.  Let $h \in L^+(X,\mu)$ satisfy 
 $$T^* (T h)^{p-1} \leq K h^{p-1} $$
 for some $K \geq \lambda(T) > 0$.  Then either $h = 0$, $0 < h < \infty$ or $h = \infty$ holds $\mu$-a.s.
 \end{lemma}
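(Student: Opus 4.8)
The plan is to treat the two claims separately, using throughout that $T$ and (as noted in the proof of Lemma~\ref{lemma:ergodicity}) also $T^*$ are positivity improving.

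\emph{Positivity of $\lambda(T)$.} Since $T$ is positivity improving it is in particular nonzero, so $\av{T} > 0$. The point is that a single strictly positive witness $f$ for a value $\lambda$ controls the whole operator norm. First I would record the pairing identity $\as{T^*(Tf)^{p-1},g} = \as{(Tf)^{p-1},Tg}$ together with the pointwise Hölder inequality for positive operators $(Tg)^p \le (Tf)^{p-1}\,T(g^p f^{1-p})$, valid for all $0 \le g$ (obtained from $T(ab) \le (Ta^p)^{1/p}(Tb^q)^{1/q}$, cf.\ \cite{MN}, by taking $a = g f^{-1/q}$ and $b = f^{1/q}$). Integrating this over $Y$ and inserting $T^*(Tf)^{p-1} \le \lambda f^{p-1}$ gives $\av{Tg}_p^p \le \lambda \av{g}_p^p$ for every $0 \le g$ (first for truncated $g$ with $g \le Cf$, then in general by monotone convergence). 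Hence $\av{T}^p \le \lambda$ for every $\lambda$ admitting a strictly positive witness, so $\lambda(T) \ge \av{T}^p > 0$.

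\emph{The trichotomy.} Here I would prove two separate dichotomies for $h$. For the lower one, assume $h \neq 0$; then $h \ge \delta 1_E$ on some set $E$ of positive finite measure, so $Th \neq 0$ and, $T$ being positivity improving, $Th > 0$ $\nu$-a.e. Thus $(Th)^{p-1} \neq 0$ and $T^*(Th)^{p-1} > 0$ $\mu$-a.e. Since the right-hand side $K h^{p-1}$ vanishes on $\{h=0\}$, the hypothesis forces $T^*(Th)^{p-1} = 0$ there, whence $\mu(\{h=0\}) = 0$, i.e.\ $h > 0$ $\mu$-a.e. For the upper one, assume $\mu(\{h=\infty\}) > 0$; then $h \ge \infty \cdot 1_{\{h=\infty\}} \neq 0$, and $T(\infty \cdot 1_{\{h=\infty\}}) = \infty \cdot 1_{\{T 1_{\{h=\infty\}} > 0\}} = \infty$ $\nu$-a.e., so $Th = \infty$ $\nu$-a.e. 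Consequently $(Th)^{p-1} = \infty$ and $T^*(Th)^{p-1} = \infty$ $\mu$-a.e., which contradicts $T^*(Th)^{p-1} \le K h^{p-1} < \infty$ on $\{h < \infty\}$ unless $\mu(\{h<\infty\}) = 0$; hence $h = \infty$ $\mu$-a.e.

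Finally I would combine the two dichotomies: either $h = 0$ $\mu$-a.e.\ or $h > 0$ $\mu$-a.e., and independently either $h < \infty$ $\mu$-a.e.\ or $h = \infty$ $\mu$-a.e. The four conjunctions collapse to exactly the three stated alternatives $h = 0$, $0 < h < \infty$, $h = \infty$, the pairing ``$h=0$ and $h=\infty$ $\mu$-a.e.'' being vacuous. I expect the only genuine obstacle to lie in the positivity part: justifying the pointwise inequality $(Tg)^p \le (Tf)^{p-1} T(g^p f^{1-p})$ for a positive operator that is \emph{not} assumed to be a kernel operator, and making the two integrations rigorous via truncation and monotone convergence. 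The trichotomy itself is soft and follows directly from the positivity-improving property of $T$ and $T^*$, and does not even require $K \ge \lambda(T)$, only $K < \infty$.
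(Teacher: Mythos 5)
Your proof is correct, but it takes a genuinely different route from the paper's on both claims. For the trichotomy, the paper never applies the operators to $h$ directly: it verifies that the sets $A_1=\{h>0\}$ and $A_2=\{h=\infty\}$ satisfy the invariance inequality $T^*(T1_{A_i}f)^{p-1}\leq 1_{A_i}T^*(Tf)^{p-1}$ for all $f\in L^p_+(X,\mu)$, via the truncation identities $1_{\{h>0\}}f=\lim_{n}f\wedge(nh)$ and $1_{\{h=\infty\}}f=\lim_{n}f\wedge(n^{-1}h)$ together with the hypothesis on $h$, and then feeds these sets into the ergodicity Lemma~\ref{lemma:ergodicity} to conclude that each is null or co-null. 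Your two independent dichotomies reach the same conclusion more directly, bypassing Lemma~\ref{lemma:ergodicity} and the truncation computations; the price is the (routine, and essentially implicit in your write-up) verification that the $L^+$-extensions of $T$ and $T^*$ still improve positivity, i.e.\ map nonzero elements of $L^+$ to a.e.\ strictly positive ones and map $\infty\cdot 1_A$ with $\mu(A)>0$ to $\infty$ a.e. For the claim $\lambda(T)>0$ the comparison runs the other way: the paper dismisses it in one sentence as following directly from $T$ (and hence $T^*$) being positivity improving, which as literally stated only yields that every admissible $\lambda$ is positive, not that their infimum is; your Hölder-type argument $(Tg)^p\leq (Tf)^{p-1}T(g^pf^{1-p})$, giving $\lambda\geq\av{T}^p$ for every admissible $\lambda$, supplies the uniform lower bound that is actually needed, and in effect proves one half of Schep's identity $\lambda(T)=\av{T}^p$, which the paper cites but declares it will not use. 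Your closing observation that neither the trichotomy nor its proof uses $K\geq\lambda(T)$, only $K<\infty$, is accurate for the paper's proof as well.
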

 \begin{proof} $\lambda(T) > 0$ directly follows from $T$ (and hence $T^*$) being positivity improving. 
 
 We show that the sets $A_1 = \{h > 0\}$ and $A_2 = \{h = \infty\}$ satisfy 
  $$T^* (T 1_{A_i} f)^{p-1} \leq 1_{A_i} T^* (T f)^{p-1}$$
  for all $f \in L^p_+(X,\mu)$, $i = 1,2$. With this at hand, the statement follows from Lemma~\ref{lemma:ergodicity}. 
  
For any $f \in L^p_+(X,\mu)$ we have 
  $$1_{\{h > 0\}} f = \lim_{n \to \infty}  f \wedge (nh) \text{ and } 1_{\{h = \infty\}} f = \lim_{n \to \infty} f \wedge (n^{-1}h),$$
  where the limits hold in $L^p(X,\mu)$ due to Lebesgue's dominated convergence theorem. Moreover, since $T$ and $T^*$ are positivity preserving, we obtain 
  $$T^* (T (f \wedge (nh)))^{p-1}  \leq T^* (T f)^{p-1} $$
  and 
  $$T^* (T (f \wedge (nh)))^{p-1}  \leq T^* (T (nh))^{p-1} \leq  K (nh)^{p-1}.  $$
  These observations yield
  \begin{align*}
   T^* (T 1_{\{h > 0\}} f)^{p-1} &= \lim_{n \to \infty}  T^* (T f \wedge (nh))^{p-1} \\
   &\leq \lim_{n \to \infty} (K (nh)^{p-1}) \wedge   T^* (T f)^{p-1}\\
   &= 1_{\{h > 0\}} T^* (T f)^{p-1}.
   \end{align*}
   A similar computation yields the statement for the set $\{h = \infty\}$.
 \end{proof}
 
%

\begin{definition}[Weak Harnack principle]
Let $1 < p < \infty$ and let  $T \colon L^p(X,\mu) \to L^p(Y,\nu)$  be a positivity preserving linear operator. We say that $T$ satisfies the {\em weak Harnack principle at $\lambda \geq 0$} if  there exist $D > 0$ and measurable sets $A,B \subseteq X$ with $\mu(A), \mu(B) > 0$, such that for every $f\in L^p_+(X,\mu)$ with 
$$T^* (T f)^{p-1} \leq \lambda f^{p-1}$$
we have
$$\int_A f  d\mu \leq D \essinf\limits_{x \in B} f. $$
\end{definition}

We can now prove a version of Lemma~\ref{lemma:main excessive} for operators satisfying a weak Harnack principle.

\begin{lemma} \label{lemma:main}
  Let $1 < p < \infty$ and let $T \colon L^p(X,\mu) \to L^p(Y,\nu)$ be a positivity improving operator that satisfies the weak Harnack principle at some $\lambda \geq \lambda(T)$. Let $f_n \in L^p(X,\mu)$ be strictly positive with $T^* (T f_n)^{p-1} \leq \lambda  f_n^{p-1}$. Then there exist $C_n > 0$ such that $h = \liminf_{n \to \infty} C_n f_n$ satisfies $0 < h < \infty$ $\mu$-a.e. and 
  $$T^* (T h)^{p-1} \leq \lambda  h^{p-1}.$$
\end{lemma}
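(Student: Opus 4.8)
The plan is to reduce everything to the trichotomy of Lemma~\ref{lemma:postivity excessive}: since $T$ is positivity improving we have $\lambda(T) > 0$, so with $K = \lambda \geq \lambda(T) > 0$ any $h \in L^+(X,\mu)$ satisfying $T^*(Th)^{p-1} \leq \lambda h^{p-1}$ automatically obeys $h = 0$, $0 < h < \infty$, or $h = \infty$ $\mu$-a.e. Thus it suffices to choose the constants $C_n$ so that (a) the candidate $h = \liminf_{n} C_n f_n$ satisfies the supersolution inequality, and (b) the normalization places $h$ in the middle case by excluding $h \equiv 0$ and $h \equiv \infty$.

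The supersolution inequality for $h$ holds for \emph{any} choice of positive $C_n$ and is proved exactly as in Theorem~\ref{theorem:main}. Fatou's lemma for the positivity preserving operator $T$ gives $Th \leq \liminf_{n} C_n Tf_n$; since $t \mapsto t^{p-1}$ is continuous and increasing it commutes with $\liminf$, whence $(Th)^{p-1} \leq \liminf_{n} C_n^{p-1}(Tf_n)^{p-1}$. A second application of Fatou's lemma, now for the positivity preserving operator $T^*$, together with the hypothesis $T^*(Tf_n)^{p-1} \leq \lambda f_n^{p-1}$, yields
$$T^*(Th)^{p-1} \leq \liminf_{n} C_n^{p-1} T^*(Tf_n)^{p-1} \leq \lambda \liminf_{n} (C_n f_n)^{p-1} = \lambda h^{p-1}.$$
Hence the analytic content is carried entirely by the choice of $C_n$.

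For that choice I would use the weak Harnack principle. Let $A, B$ and $D$ be as in its definition. Since each $f_n$ is strictly positive we have $\int_A f_n \, d\mu > 0$, and the weak Harnack inequality $\int_A f_n \, d\mu \leq D \essinf_{x \in B} f_n$ then forces $\essinf_{x \in B} f_n > 0$. This legitimizes the normalization $C_n = (\essinf_{x \in B} f_n)^{-1}$, for which $C_n f_n \geq 1$ $\mu$-a.e. on $B$, so that $h \geq 1$ on $B$ and $h$ is not $\mu$-a.e. zero; this excludes the first case of the trichotomy. To exclude the third case, note that with this normalization the weak Harnack inequality reads $\int_A C_n f_n \, d\mu \leq D$ for every $n$, so the ordinary Fatou lemma gives $\int_A h \, d\mu \leq D < \infty$; thus $h$ is finite $\mu$-a.e. on the positive-measure set $A$. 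The trichotomy now leaves only $0 < h < \infty$ $\mu$-a.e., which is the assertion.

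The main obstacle — and the one place where the weak Harnack principle is indispensable — is that a \emph{single} normalization must simultaneously prevent collapse ($h \equiv 0$) and blow-up ($h \equiv \infty$). This is exactly what the two-sided nature of the weak Harnack inequality delivers: bounding the $L^1(A)$-mass of $f_n$ by its infimum over $B$ means that fixing $\essinf_{x\in B}(C_n f_n) = 1$ both keeps $h$ bounded below on $B$ and caps its mass on $A$. Without such a principle the rescaled sequence could degenerate in either direction and passing to $\liminf_{n} C_n f_n$ would be useless.
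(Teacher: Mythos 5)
Your proof is correct and follows essentially the same route as the paper's: normalize via the weak Harnack principle so that $\essinf_{x\in B} C_n f_n = 1$, obtain the supersolution inequality for $h$ by Fatou's lemma for positivity preserving operators, and then invoke the trichotomy of Lemma~\ref{lemma:postivity excessive}, excluding $h \equiv 0$ by $h \geq 1$ on $B$ and $h \equiv \infty$ by the Fatou bound $\int_A h\, d\mu \leq D$. The only difference is expository: you spell out the two applications of Fatou's lemma and the commutation of $t \mapsto t^{p-1}$ with $\liminf$, which the paper compresses into a single displayed estimate.
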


\begin{proof}
 Let $A,B \subseteq X$ and let $D > 0$ as in the weak Harnack principle, which  we apply to $f_n$. From $\int_A f_n d\mu > 0$ we obtain $\essinf_{x \in B} f_n > 0$. Hence, we can choose $C_n > 0$ such that $\essinf_{x \in B} C_n f_n = 1$. We set
 $$h = \liminf_{n \to \infty} C_n f_n \in L^+(M). $$
 Fatou's lemma for positivity preserving operators yields 
 $$T^* (T h)^{p-1} \leq \liminf_{n \to \infty} T^* (T C_n f_n)^{p-1} \leq \liminf_{n\to \infty} \lambda (C_n f_n)^{p-1} = \lambda h^{p-1}. $$
 According to Lemma~\ref{lemma:postivity excessive}, it suffices to exclude the cases $h = 0$ and $h = \infty$ to obtain the desired statement. From $\essinf_{x \in B} C_n f_n = 1$ we infer $h \geq 1$ on $B$. Therefore, $h \neq 0$. Moreover, Fatou's Lemma yields
 $$\int_A h d\mu \leq \liminf_{n \to \infty}\int_A C_n f_n d\mu  \leq  \liminf_{n \to \infty} D \essinf_{x \in B} C_n f_n = D. $$
 In particular, $h$ is $\mu$-a.s. finite on $A$ and so we obtain $h \neq \infty$. 
\end{proof}

%
%
%
%

\begin{lemma}
 Let $k \colon Y \times X \to (0,\infty)$ be bimeasurable. Then the function 
 $$\tilde{k} \colon X \times X \to [0,\infty),\quad \tilde k(x,y) = \int_Y k(z,x) k(z,y) d\nu (z)$$
 is bimeasurable and there exists $c > 0$ and a measurable set  $A \subseteq X$ with $\mu(A) > 0$ such that 
 $$\tilde k \geq c \text{ on } A \times A.$$
\end{lemma}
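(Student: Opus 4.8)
The plan is to dispose of the measurability claim immediately and then concentrate all the work on producing the set $A$. Since $k$ is jointly measurable (bimeasurable) and nonnegative, the map $(z,x,y)\mapsto k(z,x)k(z,y)$ is measurable on $Y\times X\times X$, so Tonelli's theorem shows that integrating out $z$ yields a jointly measurable function $\tilde k$ of $(x,y)$. For the quantitative lower bound I would first localize: using $\sigma$-finiteness, fix $Y_0\subseteq Y$ and $X_0\subseteq X$ of finite positive measure; since $\{1/n\le k\le n\}\nearrow Y\times X$ as $n\to\infty$, choose $n$ so large that the set $E=\{(z,x)\in Y_0\times X_0 : 1/n\le k(z,x)\le n\}$ has positive product measure. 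On $E$ the kernel is pinched between $1/n$ and $n$, and it is exactly this pinching that will convert overlaps of sections into lower bounds for $\tilde k$.

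Next I would pass from the amorphous set $E$ to a genuine rectangle in which $E$ is almost everything. Because finite disjoint unions of measurable rectangles form an algebra generating the product $\sigma$-algebra, on the finite measure space $Y_0\times X_0$ the set $E$ can be approximated, up to arbitrarily small product measure, by such a finite union $R=\bigsqcup_i (B_i\times A_i)$. A pigeonhole/averaging argument over the index $i$ then produces a single rectangle $B\times A_0$ of positive measure in which $E$ has density as close to $1$ as desired, say $(\nu\times\mu)(E\cap(B\times A_0))\ge (1-\eps)\,\nu(B)\mu(A_0)$ with $\eps$ small, to be fixed below.

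Finally I would extract $A$ and conclude. Writing $F^x=\{z\in B : (z,x)\in E\}$, Fubini gives $\int_{A_0}\nu(F^x)\,d\mu(x)\ge(1-\eps)\nu(B)\mu(A_0)$, so a Markov-type estimate yields a subset $A\subseteq A_0$ with $\mu(A)>0$ on which $\nu(F^x)\ge(1-\delta)\nu(B)$ for a suitable $\delta$ with $\eps<\delta<1/2$. For $x,y\in A$ the sections then overlap substantially, $\nu(F^x\cap F^y)\ge \nu(F^x)+\nu(F^y)-\nu(B)\ge(1-2\delta)\nu(B)>0$, and on $F^x\cap F^y\subseteq B$ both $k(\cdot,x)$ and $k(\cdot,y)$ are at least $1/n$; hence
$$\tilde k(x,y)\ge\int_{F^x\cap F^y}k(z,x)k(z,y)\,d\nu(z)\ge \frac{1}{n^2}\,\nu(F^x\cap F^y)\ge \frac{(1-2\delta)\nu(B)}{n^2}=:c>0.$$
Thus $\tilde k\ge c$ on $A\times A$, as required. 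The one delicate point, which I expect to be the main obstacle, is the rectangle step: it is essential that the density be pushed close enough to $1$ (so that $\eps<\delta<1/2$ can be arranged) that two large sections are forced to meet in positive measure. Localizing $A$ alone, without simultaneously capturing a common ``observation set'' $B$ of source points, would not suffice, as simple examples with pairwise almost-disjoint sections show.
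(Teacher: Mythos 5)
Your proof is correct, and its skeleton --- pinch the kernel from below by $1/n$ on a large set, then force two sections of that set to overlap via inclusion--exclusion with a density threshold below $1/2$ --- is the same as the paper's. Where you genuinely diverge is in how the high density is produced. The paper first reduces to probability measures (restrict to finite-measure sets and rescale), and then simply chooses $n$ so large that $D_n=\{k\ge 1/n\}$ itself has product measure at least $1-\varepsilon$; the set $A=\{x \mid \nu((D_n)^x)>1-2\varepsilon\}$ then has positive measure by a one-line Fubini contradiction, and taking $\varepsilon<1/4$ finishes the proof. You instead stop at the weaker statement that $E=\{1/n\le k\le n\}\cap(Y_0\times X_0)$ has positive measure, and recover high density afterwards through the rectangle-approximation theorem plus a pigeonhole over the rectangles, followed by a Markov-type selection of $A$. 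That extra machinery is sound (and it is the right tool when all one knows is that a set has positive measure), but here it is avoidable: since $\{1/n\le k\le n\}\nearrow Y\times X$, monotone convergence lets you take $n$ so large that $E$ already has density $\ge 1-\varepsilon$ in $Y_0\times X_0$, so you may take $B=Y_0$, $A_0=X_0$ and jump straight to your Markov step. This collapses exactly the step you flag as ``the main obstacle''; the paper's normalization trick is what makes it disappear. (Two minor remarks: the upper cutoff $k\le n$ in your definition of $E$ is never used, since only the lower bound enters the final estimate; and your pigeonhole step needs the approximation error $\varepsilon'$ to be chosen small relative to $\varepsilon\,(\nu\times\mu)(E)$, which you should make explicit.)
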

\begin{proof} The measurability of $\tilde k$ follows from Fubini's theorem.  For the estimate we can assume that $\mu$ and $\nu$ are probability measures (else restrict everything to sets of finite measure and rescale).  For $n \in \N$ we consider the sets $D_n = \{(y,x) \in Y \times X \mid k(y,x) \geq 1/n\}$ and for $x \in X$ we consider the measurable sections $(D_n)^x = \{y \in Y \mid (y,x) \in D_n\}$. 
 By definition   we have
 $$\tilde k (x,y) = \int_Y k(z,y)k(z,x) d\nu(z) \geq \frac{1}{n^2} \nu((D_n)^x \cap (D_n)^y). $$
 Hence, we need to prove that for some $n \in \N$ there exists $C > 0$ and a set $A$ with $\mu(A) > 0$ and $\nu((D_n)^x \cap (D_n)^y) \geq C > 0$ for all $x,y \in A$.

Our assumption $k > 0$ yields $D_n  \nearrow Y \times X$, as $n \to \infty$. Using the monotone convergence theorem, for $\varepsilon > 0$ we find $n_\varepsilon \in \N$ with 
 $$(\nu \otimes \mu) (D_{n_\varepsilon}) \geq 1 - \varepsilon.$$
 We claim that $A_\varepsilon = \{x \in X \mid \nu((D_{n_\varepsilon})^x) > 1 - 2\varepsilon\}$ satisfies  $\mu(A_\varepsilon) > 0$ (here we use Fubini's theorem for the measurability of $A_\varepsilon$). If this were not the case, we would have $ \nu((D_{n_\varepsilon})^x) \leq 1 - 2\varepsilon$ for $\mu$-a.e. $x \in X$. But then Fubini's theorem  leads to the contradiction
 $$(\nu \otimes \mu) (D_{n_\varepsilon}) = \int_Y \nu ((D_{n_\varepsilon})^x) d\mu(x) \leq 1 - 2\varepsilon.$$
 Hence, for $x,y \in A_\varepsilon$ we have 
 $$\nu((D_{n_\varepsilon})^x \cap (D_{n_\varepsilon})^y) =  \nu((D_{n_\varepsilon})^x) + \nu((D_{n_\varepsilon})^y) - \nu((D_{n_\varepsilon})^x \cup (D_{n_\varepsilon})^y) > 1 - 4 \varepsilon,  $$
 so that $A = A_\varepsilon$ for some $\varepsilon < 1/4$ is a set as required.
\end{proof}

\begin{remark}
 If there exists $c > 0$ and not neglibile sets $A \subseteq X,B \subseteq Y$ such that $k \geq c$ on $B \times A$, then the statement of the previous lemma trivially follows from the definition of $\tilde k$. Note however that $k > 0$ does not imply the existence of such sets, see e.g. \cite[Theorem~B.1]{HS}.
\end{remark}

\begin{lemma}[Weak Harnack principle for kernel operators]\label{lemma:weak harnack kernel}
Let $1 < p < \infty$ and let $T \colon L^p(X,\mu) \to L^p(Y,\nu)$ be a kernel operator with strictly positive kernel. Then $T$ satisfies the weak harnack principle at every $\lambda > 0$. 
\end{lemma}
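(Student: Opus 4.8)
The plan is to take $A = B = E$ for a single set $E \subseteq X$ of positive measure and to deduce the Harnack inequality directly from the hypothesis $T^* (T f)^{p-1} \leq \lambda f^{p-1}$. The essential input is not merely the statement of the previous lemma but the sets constructed in its proof: after reducing to finite measures by restriction and rescaling (exactly as there), one obtains $n \in \N$, a finite-measure set $Y_0 \subseteq Y$, and a set $E \subseteq X$ with $0 < \mu(E) < \infty$ such that, writing $S_x = \{z \in Y_0 \mid k(z,x) \geq 1/n\}$, one has the overlap bound $\nu(S_x \cap S_y) \geq \beta$ for all $x,y \in E$ and some constant $\beta > 0$. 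I would record this as the starting point.

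Now fix $f \in L^p_+(X,\mu)$ with $T^*(Tf)^{p-1} \leq \lambda f^{p-1}$ and set $I = \int_E f\,d\mu$; since $\mu(E) < \infty$ and $p > 1$, Hölder gives $I < \infty$, and the asserted inequality is trivial when $I = 0$, so assume $I > 0$. The argument chains two lower bounds. First, since $k(\cdot,x) \geq 1/n$ on $S_x$ for $x \in E$, the hypothesis yields for a.e. $x \in E$
$$\lambda f(x)^{p-1} \geq \int_{S_x} k(z,x)(Tf)(z)^{p-1}\,d\nu(z) \geq \frac{1}{n}\int_{S_x}(Tf)(z)^{p-1}\,d\nu(z).$$
Second, I would bound $Tf$ below by the auxiliary function $\psi(z) = \int_{\{y \in E \,:\, z \in S_y\}} f(y)\,d\mu(y)$: restricting the defining integral of $Tf$ to those $y \in E$ with $z \in S_y$, where $k(z,y) \geq 1/n$, gives $(Tf)(z) \geq \frac{1}{n}\psi(z)$. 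Two features of $\psi$ are crucial: the pointwise bound $0 \leq \psi \leq I$, and, by Tonelli together with the overlap bound, $\int_{S_x}\psi\,d\nu = \int_E f(y)\,\nu(S_x \cap S_y)\,d\mu(y) \geq \beta I$ for every $x \in E$.

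It remains to convert the lower bound on $\int_{S_x}\psi\,d\nu$ into one on $\int_{S_x}\psi^{p-1}\,d\nu$, and this is the single place where the exponent $p-1$ must be handled with care — the main (if modest) obstacle, since the natural computation involving the kernel of $T^*T$ produces $Tf$ to the first power while the hypothesis controls only its $(p-1)$-st power. Writing $\phi = \psi/I \in [0,1]$, I distinguish two cases. If $1 < p \leq 2$ then $p-1 \leq 1$ and $\phi^{p-1} \geq \phi$ pointwise, so $\int_{S_x}\phi^{p-1}\,d\nu \geq \int_{S_x}\phi\,d\nu \geq \beta$. If $p \geq 2$ then $t \mapsto t^{p-1}$ is convex and Jensen's inequality on the finite-measure set $S_x$ gives $\int_{S_x}\phi^{p-1}\,d\nu \geq \nu(S_x)^{2-p}\bigl(\int_{S_x}\phi\,d\nu\bigr)^{p-1} \geq \nu(Y_0)^{2-p}\beta^{p-1}$, using $\nu(S_x) \leq \nu(Y_0) < \infty$ and $2-p \leq 0$. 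In either case $\int_{S_x}\psi^{p-1}\,d\nu \geq \delta' I^{p-1}$ for a constant $\delta' > 0$ depending only on $p$, $\beta$ and $\nu(Y_0)$.

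Combining the three estimates yields, for a.e. $x \in E$,
$$\lambda f(x)^{p-1} \geq \frac{1}{n}\cdot\frac{1}{n^{p-1}}\int_{S_x}\psi^{p-1}\,d\nu \geq \frac{\delta'}{n^p}\,I^{p-1},$$
whence $\essinf_{x \in E} f \geq (\delta'/(\lambda n^p))^{1/(p-1)}\,I$. This is exactly the weak Harnack principle $\int_E f\,d\mu \leq D\,\essinf_E f$ with $A = B = E$ and $D = (\lambda n^p/\delta')^{1/(p-1)}$, valid for every $\lambda > 0$. I expect the only genuinely delicate points to be the measure-theoretic bookkeeping in extracting the sets $S_x$ with their overlap bound (already carried out in the proof of the previous lemma) and the case split on $p$ described above.
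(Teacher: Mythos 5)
Your argument is correct, but it takes a genuinely different route from the paper's. The paper works by duality, using only the \emph{statement} of the preceding lemma: it pairs the hypothesis $T^*(Tf)^{p-1}\leq\lambda f^{p-1}$ against a test function $\varphi\in L^q_+(X,\mu)$, invokes Minkowski's inequality for integrals and Fubini to bring in the symmetrized kernel $\tilde k(x,y)=\int_Y k(z,x)k(z,y)\,d\nu(z)\geq c$ on $A\times A$, arrives at $\lambda\int_X f\varphi\,d\mu\geq c\int_A\varphi\,d\mu\int_A f\,d\mu$, and then chooses $\varphi=1_B$ for a finite-measure set $B\subseteq A$ on which $f\leq\essinf_A f+\varepsilon$, giving the claim with $D=\lambda/c$ and the same set on both sides. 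You never form $\tilde k$ and never use a test function: instead you re-open the \emph{proof} of the preceding lemma to extract the sets $S_x$ with the uniform overlap bound $\nu(S_x\cap S_y)\geq\beta$ on $E\times E$ (strictly stronger information than the lemma's conclusion --- $\tilde k\geq c$ alone does not produce such sets --- so your proof is less modular, as you acknowledge), and then you argue pointwise: $Tf\geq\psi/n$, Tonelli to get $\int_{S_x}\psi\,d\nu\geq\beta I$, and the dichotomy ($\phi^{p-1}\geq\phi$ on $[0,1]$ when $p\leq 2$, Jensen when $p\geq 2$) to convert this into $\int_{S_x}\psi^{p-1}\,d\nu\geq\delta' I^{p-1}$, whence $f\geq(\delta'/(\lambda n^p))^{1/(p-1)}I$ a.e.\ on $E$. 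What your route buys: the exponent $p-1$ is handled by an explicit elementary case split, whereas the paper compresses it into a single appeal to Minkowski's integral inequality --- a step that is in fact delicate, since for $1<p<2$ the exponent $p-1$ lies below $1$, where the standard Minkowski integral inequality does not apply, and for $p>2$ normalization factors need to be tracked; you also obtain a fully explicit constant $D=(\lambda n^p/\delta')^{1/(p-1)}$. What it costs is the reliance on the internal construction of the previous lemma rather than its statement; if you wanted a self-contained write-up you would either restate that construction as a separate lemma or prove the overlap bound again. The individual steps all check out: $\psi$ is measurable by bimeasurability of $k$, $\nu(S_x)\geq\beta>0$ so Jensen is legitimate, $I<\infty$ by Hölder since $\mu(E)<\infty$, and the sets $E$, $Y_0$ and the bound $\beta$ survive the restriction/rescaling because your final estimate is run entirely with the original measures and the original operator.
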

\begin{proof}
 Let $\lambda > 0$ and let $f \in L^p_+(X,\mu)$ with $T^* (T f)^{p-1} \leq \lambda f^{p-1}$. Using the previous lemma we choose $c > 0$ and a set $A$ with $\mu(A) > 0$ such that 
  $$\tilde k (x,y) = \int_Y k(z,x)k(z,y) d\nu(z) \geq c$$
 for all $x,y \in A$. Let $q$ such that $1/q + q/p = 1$ and let $\varphi \in L_+^q(X,\mu)$. Using Minkowski's inequality for integrals and Fubini's theorem we obtain
\begin{align*}
 \lambda \int_X  f(x) \varphi(x) d\mu (x) &\geq   \int_X \varphi(x) \left( \int_Y k(z,x) \left( \int_X k(z,y)f(y) d\mu(y)  \right)^{p-1} d\nu(z) \right)^{1/(p-1)} d\mu(x)\\
 &\geq  \left(\int_X \varphi(x)  \left(  \int_Y k(z,x)   \int_X k(z,y)f(y) d\mu(y)    d\nu(z) \right)^{p-1} d\mu(x)\right)^{1/(p-1)}\\
 &=   \left(\int_X \varphi(x)   \left(  \int_X \tilde k(x,y)   f(y) d\mu(y)   \right)^{p-1} d\mu(x)\right)^{1/(p-1)}\\
 &\geq c   \int_A \varphi(x)  d\mu(x) \int_A f(y) d\mu(y).
\end{align*}
 Now we let $\varphi  = 1_B$, where $B \subseteq A$ is a set wit $0 < m(B) < \infty$ such that $f \leq \essinf_{x \in A} f + \varepsilon$ on $B$. We obtain 
 $$\lambda(\essinf_{x \in A} f + \varepsilon) m(B) \geq c m(B) \int_A f(y) d\mu(y).  $$
 Since $\varepsilon > 0$ is arbitrary, this proves the claim with constant $D = \lambda/c > 0$.
\end{proof}
\begin{remark}
 We proved that in principle the weak Harnack principle holds at all $\lambda > 0$. However, nontrivial functions  with $T^* (T f)^{p-1} \leq \lambda f^{p-1}$ only exist for $\lambda \geq \lambda(T)$.
\end{remark}
%
%
%

\begin{proof}[Proof of Lemma~\ref{lemma:main excessive}]
 This follows directly from the previous two lemmas.
\end{proof}

\bibliographystyle{plain}
 
\bibliography{literatur}

\end{document}